\documentclass[reqno,twoside]{amsart}
\usepackage{amscd}
\usepackage{amsfonts}
\usepackage{amsmath}
\usepackage{amssymb}
\usepackage{amsthm}
\usepackage{amsaddr}
\usepackage{fancyhdr}
\usepackage{latexsym}
\usepackage[colorlinks=true, pdfstartview=FitV, linkcolor=blue, citecolor=blue, urlcolor=blue]{hyperref}
\usepackage{enumitem}      
\usepackage{mathtools}            
\usepackage{indentfirst} 
\usepackage{color}
\usepackage{caption}          
\usepackage[normalem]{ulem}
\usepackage{upgreek}
\usepackage{mathrsfs}
\newcommand{\eee}[1]{\begin{equation}#1\end{equation}}

\newcommand{\aaa}[1]{\begin{alignat}{2}#1\end{alignat}}
\newcommand{\ddd}[1]{\begin{equation}\begin{aligned}#1\end{aligned}\end{equation}}
\newcommand{\nn}{\nonumber}
\newcommand{\p}{\partial}
\newcommand{\sech}{\text{sech}}
\newcommand{\ve}{\varepsilon}

\newcommand{\no}[1]{\left\| #1 \right\|}

\renewcommand{\r}{\mathbb R}
\newcommand{\n}{\mathbb N}
\newcommand{\z}{\mathbb Z}
\renewcommand{\l}{\mathbb L}

\usepackage{accents}
\renewcommand*{\dot}[1]{%
\accentset{\mbox{\small\bfseries .}}{#1}}
\renewcommand*{\ddot}[1]{%
\accentset{\mbox{\small\bfseries ..}}{#1}}
\usepackage{scalerel,stackengine}
\stackMath
\newcommand\wwhat[1]{%
\savestack{\tmpbox}{\stretchto{%
  \scaleto{%
    \scalerel*[\widthof{\ensuremath{#1}}]{\kern-.6pt\bigwedge\kern-.6pt}%
    {\rule[-\textheight/2]{1ex}{\textheight}}
  }{\textheight}%
}{0.5ex}}%
\stackon[1pt]{#1}{\tmpbox}%
}
\usepackage{thmtools}
\declaretheoremstyle[headfont=\normalfont\bfseries, bodyfont=\itshape, spaceabove=9pt, spacebelow=9pt]{theorem} 
\theoremstyle{theorem} 
\newtheorem{theorem}{Theorem}[] 
\newtheorem{remark}{Remark}[]
\newtheorem{proposition}{Proposition}[]
\newtheorem{lemma}{Lemma}[]
\newtheorem{corollary}{Corollary}[]

\theoremstyle{definition}

\allowdisplaybreaks
\numberwithin{equation}{section}
\numberwithin{figure}{section}
\usepackage{geometry}
\usepackage[breakable, theorems, skins]{tcolorbox}
\tcbset{enhanced}

\usepackage{tikz}
\usetikzlibrary{arrows.meta}
\usetikzlibrary{decorations.markings}
\tikzset{->-/.style={decoration={
  markings,
  mark=at position #1 with {\arrow{>}}},postaction={decorate}}}
  \tikzset{middlearrow/.style={
        decoration={markings,
            mark= at position 0.55 with {\arrow{#1}} ,
        },
        postaction={decorate}
    }
}
\makeatletter
  \renewcommand\subsection{\@startsection{subsection}{2}%
  \z@{-1\linespacing\@plus-0.7\linespacing}{0.7\linespacing}%
  {\bfseries}}
\makeatother
\let\OLDthebibliography\thebibliography
\renewcommand\thebibliography[1]{
  \OLDthebibliography{#1}
  \setlength{\parskip}{0pt}
  \setlength{\itemsep}{0pt plus 0.3ex}
}
\AtBeginDocument{
   \def\MR#1{}
}

\makeatletter
\@namedef{subjclassname@2020}{%
  \textup{2020} Mathematics Subject Classification}
\makeatother

\begin{document}

\title{Blow-up for the  integrable defocusing Ablowitz-Ladik equation}

\author{Dirk Hennig$^{\MakeLowercase{a}}$, 
Nikos I. Karachalios$^{\MakeLowercase{a}}$, 
Dionyssios Mantzavinos$^{\MakeLowercase{b}}$,
\\[1mm]
Dimitrios Mitsotakis$^{\MakeLowercase{c}}$, 
Barbara Prinari$^{\MakeLowercase{d},\MakeLowercase{e}}$ 
}
	\address{\normalfont $^a$Department of Mathematics,  University of Thessaly, 35100, Lamia, Greece
	\\
	\normalfont $^b$Department of Mathematics, University of Kansas, Lawrence, KS 66045, USA
	\\ 
	\normalfont $^c$Victoria University of Wellington, School of Mathematics and Statistics,  Wellington 6140, New Zealand
	\\ 
	\normalfont $^d$Department of Mathematics, State University of New York at Buffalo, Buffalo, NY 14260, USA
    \\ 
    \normalfont $^e$Department of Mathematics, University of Ioannina, Ioannina 45110, Epirus, Greece}
    \email{
    karan@uth.gr \textnormal{(corresponding author)}}

\thanks{\textit{Acknowledgements.} The third author gratefully acknowledges support from the U.S. National Science Foundation (NSF-DMS 2206270 and NSF-DMS 2509146) and the Simons Foundation (SFI-MPS-TSM-00013970). The fifth author gratefully acknowledges partial support from the U.S. National Science Foundation (NSF-DMS-2406626), the Fulbright Foundation in Greece and the Fulbright program, and the Mathematics Department of the University of Ioannina, Greece, for the kind hospitality.
}
\subjclass[2020]{37K60, 35Q55, 35B44, 37K40}
\keywords{
defocusing Ablowitz-Ladik equation, 
local well-posedness,
global existence, 
blow-up,
singular solutions}
\date{September 28, 2026}

\begin{abstract}
This work establishes finite-time blow-up for the completely integrable defocusing Ablowitz-Ladik equation on both infinite and finite lattices. We first establish local well-posedness  for arbitrary initial data in the space $\ell^\infty$ of uniformly bounded sequences, with no decay assumptions and under general boundary conditions. We then show that the unit circle acts as a separatrix for the lattice dynamics: sites with an initial modulus inside, on, or outside the unit circle maintain their modulus within their respective region for as long as the solution exists. As a consequence, initial data in the closure of the $\ell^\infty$ unit ball generate global-in-time solutions. On the other hand, the unit circle separatrix opens up the possibility for finite-time blow-up, which is restricted to sites with initial modulus outside the unit circle. We establish this scenario by constructing a class of initial data whose corresponding solutions are singular, with explicitly given exact blow-up times. We also construct solutions emanating from initial data with an arbitrarily large single-site excitation outside the unit circle that exist globally in time, a behavior driven by the convergence of the corresponding modulus to the unit circle equilibrium. In the presence of such solutions, along with other important classes of analytical, global-in-time  solutions that remain outside the closed $\ell^\infty$ unit ball for all times, we examine the fundamental question of the dominant dynamics for general initial data outside the closed unit ball. By utilizing the constructed class of singular solutions as subsolutions for the general problem, we prove that the  dynamics for the completely integrable defocusing Ablowitz-Ladik lattice is dominated by finite-time blow-up. We conclude with numerical studies of illustrative examples of general initial data, which demonstrate excellent agreement with our main theoretical results.
\end{abstract}

\maketitle
\markboth
{D. Hennig, N. Karachalios, D. Mantzavinos, D. Mitsotakis,   B. Prinari}
{Blow-up for the integrable defocusing Ablowitz-Ladik equation}

\vspace*{-3mm}
{\hypersetup{linkcolor=black}\tableofcontents}

\section{Introduction and main results}

This work establishes finite-time blow-up for the completely integrable defocusing Ablowitz-Ladik (AL) equation~\cite{al1976b,al1976a}
\begin{equation}\label{al}
	i \frac{du_n}{dt} = (u_{n+1}-2u_n+u_{n-1}) - |u_n|^2 (u_{n+1}+u_{n-1}), \quad n\in \l, \ t\in\mathbb{R},
\end{equation}
where $\l \subseteq \z$ denotes either an infinite or a finite lattice (including, in particular, the case of $\l = \z$), under suitable conditions on a broad class of initial data that can accommodate generic boundary conditions on an infinite or finite lattice. 
This is a noteworthy result that highlights a crucial difference between the dynamics in a discrete versus a continuous setting.
Specifically, under the change of variables 	$u_n \mapsto h u_n, \, t \mapsto h^{-2} t$, equation~\eqref{al} takes the form
\begin{equation} \label{e:ALq}
	i\frac{du_n}{dt}=\frac{1}{h^2}(u_{n+1}-2u_n+u_{n-1}) - |u_n|^2 (u_{n+1}+u_{n-1}), \quad n\in \l, \ t\in\mathbb{R}, 
\end{equation}
whose formal continuous limit as $h \to 0$ is the celebrated completely integrable defocusing nonlinear Schr\"odinger (NLS) equation
\begin{equation}\label{nls}
	i \frac{\p u}{\p t} =  \frac{\p^2 u}{\p x^2} - 2|u|^2 u, \quad x\in\mathbb{R}, \ t\in\mathbb{R},
\end{equation}
which does \textit{not} exhibit blow-up. 

Specifically, we recall that the NLS equation \eqref{nls} is one of the prototypical examples of completely integrable nonlinear dispersive equations. As such, for initial data of sufficient smoothness and decay, it can be solved via the inverse scattering transform  \cite{zs1971, ac1991}. 
In the case of zero boundary conditions at infinity, a fundamental result of the inverse scattering transform analysis for \eqref{nls} is that the relevant spectral problem  exhibits a purely continuous spectrum on the real axis, with a complete absence of discrete eigenvalues. Consequently, since discrete eigenvalues correspond directly to soliton solutions, the defocusing cubic NLS equation \eqref{nls} with zero boundary conditions does \textit{not} possess any solitons, bright or dark; solutions exist globally in time and consist entirely of purely radiative, dispersive wave packets. 
In particular, concerning the fundamental issue of global existence and long-time asymptotics, the seminal work \cite{dz2003} established that global existence in time and dispersive dynamics hold for initial data $u_0 \in H^{1,1}(\mathbb{R}) = \left\{ f : f, f', xf \in L^2(\mathbb{R}) \right\}$
\textit{independently} of their size.

We also refer to \cite{c2003, t2006} for the dispersive dynamics of equation \eqref{nls} supplemented with $H^1(\mathbb{R})$ initial data, noting that the assumption $u_0\in H^{1,1}(\mathbb{R})$ guarantees the necessary smoothing properties of the reflection coefficient involved in the inverse scattering transform analysis and the associated Riemann-Hilbert problem.
On the other hand, in the case of symmetric, constant-in-modulus nonzero boundary conditions at infinity, the NLS equation~\eqref{nls} possesses the renowned dark/grey soliton solutions, and global existence is again guaranteed by the inverse scattering transform \cite{dpmv2013,cj2016} for arbitrarily large initial data such that its difference from the nonzero boundary values at $\pm \infty$ belongs to  $ L^{1,2}(\mathbb{R}_\pm) = \left\{ f : (1+|x|)^2 f \in L^1(\mathbb{R}_\pm) \right\}$.

Our results on finite-time singularities of the AL equation \eqref{al} cover the widest  class of physically significant boundary conditions, namely:
\begin{enumerate}[label=(\arabic*)]
\item \emph{zero boundary conditions at infinity};
	\item  \emph{nonzero boundary conditions at infinity};  
	\item \emph{periodic conditions} or, in the setting of a finite lattice, \emph{Dirichlet boundary conditions}.
\end{enumerate}
Cases (1) and (2) correspond to an infinite-dimensional dynamical system, while case (3) corresponds to a finite-dimensional one. 

We now outline the strategy for establishing the finite-time collapse for the AL equation \eqref{al}. In order to address the question in a unified manner across all of the aforementioned boundary conditions (1)-(3), we first prove  local well-posedness of \eqref{al} for initial data in $\ell^\infty$.
Although this is a departure from the standard Hilbert space setting of $\ell^2$ sequences, requiring the sequence $u = \{u_n\}_{n\in\l}$ to be in $\ell^\infty$ is the minimal assumption necessary for the solutions of the AL equation to accommodate all of the above boundary conditions (1)-(3). Indeed, our result covers all types of boundary conditions associated with uniformly bounded sequences as solutions of the corresponding  problems and can be stated as follows:
\begin{theorem}\label{lwp-t} 
\textnormal{(i) (Local well-posedness)} For any initial datum $u(0)\in \ell^\infty$, there exists a $T_{\max} = T_{\max}(u(0))>0$ such that the associated initial value problem for the AL equation \eqref{al} has a unique local solution   
$$
u\in C^1([0,T_{\max});\ell^\infty),
$$
which depends continuously on the initial condition with respect to the norm of $C([0,T],\ell^\infty)$ for any $0<T<T_{\max}$.
\\[1mm]
\textnormal{(ii)} In addition, the following alternative holds: either $T_{\max}=\infty$ (global existence), or $T_{\max}<\infty$ and $\displaystyle \lim_{t \to  T_{\max}}\no{u(t)}_{\ell^\infty}=\infty$ (blow-up in finite time). 
\end{theorem}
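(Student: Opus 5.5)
We treat \eqref{al} as an abstract ODE $\dot u = F(u)$ in the Banach space $\ell^\infty(\l)$ and apply the Picard--Lindel\"of theorem for locally Lipschitz vector fields in Banach spaces. Let $S$ be the shift, $(Su)_n=u_{n+1}$, with its inverse (or its Dirichlet/periodic analogue on a finite lattice). Set $\Delta u = Su-2u+S^{-1}u$ and $N(u)_n = |u_n|^2(u_{n+1}+u_{n-1})$. Then
$$
F(u) = -i\bigl(\Delta u - N(u)\bigr).
$$
Since $\|Su\|_{\ell^\infty}\le \|u\|_{\ell^\infty}$, $\Delta$ is a bounded linear operator on $\ell^\infty$ with norm at most $4$. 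No decay or summability is needed, so all the boundary conditions (1)--(3) are covered, because they are all encoded in $u\in\ell^\infty$.

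The key estimate is that $N$ maps $\ell^\infty$ into itself and is Lipschitz on bounded sets. For $\|u\|_{\ell^\infty},\|v\|_{\ell^\infty}\le R$ we write
$$
N(u)_n-N(v)_n = \bigl(|u_n|^2-|v_n|^2\bigr)(u_{n+1}+u_{n-1}) + |v_n|^2\bigl(u_{n+1}-v_{n+1}+u_{n-1}-v_{n-1}\bigr).
$$
Using $\bigl||u_n|^2-|v_n|^2\bigr|\le 2R|u_n-v_n|$, this gives $\|N(u)-N(v)\|_{\ell^\infty}\le 6R^2\|u-v\|_{\ell^\infty}$. Hence
$$
\|F(u)-F(v)\|_{\ell^\infty}\le (4+6R^2)\|u-v\|_{\ell^\infty}.
$$
Next we rewrite the problem as the integral equation $u(t)=u(0)+\int_0^t F(u(s))\,ds$ and apply the contraction mapping principle on the closed ball of radius $2\|u(0)\|_{\ell^\infty}$ in $C([0,T];\ell^\infty)$. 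It suffices to take $T\sim 1/(4+6(2\|u(0)\|_{\ell^\infty})^2)$. Because $F$ is continuous and $u$ is continuous, the integrand is continuous in $t$, which upgrades the solution to $u\in C^1$. Uniqueness follows from Gronwall's inequality with the local Lipschitz constant.

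Continuous dependence also comes from Gronwall. Fix $0<T<T_{\max}(u(0))$ and let $R$ bound $\|u(t)\|_{\ell^\infty}$ on $[0,T]$. For data $v(0)$ sufficiently close to $u(0)$, a continuation argument keeps $v$ inside the ball of radius $R+1$ on $[0,T]$. This yields
$$
\sup_{[0,T]}\|u-v\|_{\ell^\infty}\le e^{(4+6(R+1)^2)T}\|u(0)-v(0)\|_{\ell^\infty}.
$$

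For part (ii) we define $T_{\max}$ as the supremum of existence times of the continuations, which are glued together by uniqueness. Suppose $T_{\max}<\infty$ but $\liminf_{t\to T_{\max}}\|u(t)\|_{\ell^\infty}<\infty$. The local existence time depends only on a bound for the data norm, so restarting from a time close to $T_{\max}$ where the norm is at most $M$ extends the solution beyond $T_{\max}$, a contradiction. This forces $\lim\|u(t)\|_{\ell^\infty}=\infty$, and it is indeed a full limit rather than just a limsup. The step that needs the most care is this upgrade from liminf to a genuine limit, together with the uniform-in-$R$ control of the existence time in the continuous dependence argument. Both follow from the explicit lower bound on $T$ in terms of $\|u(0)\|_{\ell^\infty}$.
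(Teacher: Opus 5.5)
Your proposal is correct and follows essentially the same route as the paper. Both rest on $\|\Delta\|_{\mathcal B(\ell^\infty)}\le 4$ and the local Lipschitz bound $6R^2$ for the cubic term, followed by a Picard--Lindel\"of argument in $\ell^\infty$. The differences are cosmetic. The paper passes through the Duhamel formula with the group $e^{-i\Delta t}$, cites Zeidler's generalized Picard--Lindel\"of theorem, and cites Segal's theorem for the blow-up alternative. You instead keep the bounded operator $\Delta$ inside the vector field and carry out the contraction, Gronwall and continuation arguments by hand, which makes your version slightly more self-contained.
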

Theorem \ref{lwp-t} is proved in Section \ref{wp-s} and readily implies the uniform boundedness of each $u_n(t)$ over the maximal interval of existence $[0, T_{\max})$, allowing us to prove the existence of a unique solution to the initial value problem for the \textit{modulus equation}
\begin{equation}\label{mod_intro}
		\frac{d}{dt}|u_{n}(t)|^2 
        =
        2R_n(t)(|u_{n}(t)|^2-1), \quad n \in \l, \ t\in \r, 
\end{equation}
for each individual $|u_n(t)|$ and any initial datum $|u_n(0)|$ away from zero (since the nonlinearity in \eqref{mod_intro} is not locally Lipschitz at zero), 
where the important quantity $R_n(t)$ is given by
\eee{\label{rn}
R_n(t) := -\text{Im} \left[ \left(u_{n+1}(t)+u_{n-1}(t)\right)\overline{u}_{n}(t) \right].
}
This result, combined with the \emph{autonomy} of the AL equation in $\ell^\infty$,  enables us to establish the important structural fact that \textit{the unit circle $|u_n|=1$ is dynamically invariant at the level of each individual lattice site}. Consequently, the quantity $|u_n(t)|-1$  preserves its sign pointwise in $n$ for the lifetime of the solution (see Theorem~\ref{sep-t}).
An immediate consequence of this pointwise rigidity is our second main result on the   global well-posedness of \eqref{al} for  initial data in the closed unit ball of $\ell^\infty$:
\begin{theorem}[Global well-posedness]
\label{gwp-t}
Suppose that $\no{u(0)}_{\ell^\infty}\leq 1$. Then, the associated local solution $u\in C^1([0, T_{\max}), \ell^\infty)$ to the AL equation \eqref{al} guaranteed by Theorem~\ref{lwp-t} is in fact a global solution, i.e. $T_{\max}=\infty$.
\end{theorem}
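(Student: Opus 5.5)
The plan is to combine the blow-up alternative of Theorem~\ref{lwp-t}(ii) with the sitewise invariance of the unit circle described above (Theorem~\ref{sep-t}). Let $u\in C^1([0,T_{\max});\ell^\infty)$ be the local solution with $\no{u(0)}_{\ell^\infty}\le 1$. It suffices to show that $|u_n(t)|\le 1$ for all $n\in\l$ and all $t\in[0,T_{\max})$. Then $\no{u(t)}_{\ell^\infty}\le 1$ on the whole existence interval, so $\lim_{t\to T_{\max}}\no{u(t)}_{\ell^\infty}=\infty$ cannot hold, and the alternative in Theorem~\ref{lwp-t}(ii) forces $T_{\max}=\infty$.

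For the sitewise bound, fix $n$ and split into three cases according to $|u_n(0)|$.

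\emph{Case $|u_n(0)|=1$.} By autonomy and uniqueness in $\ell^\infty$ (the invariance statement of Theorem~\ref{sep-t}), $|u_n(t)|=1$ for all $t$.

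\emph{Case $0<|u_n(0)|<1$.} The modulus equation \eqref{mod_intro} is linear in $w_n:=|u_n|^2-1$, namely $\dot w_n=2R_n(t)w_n$. Hence
$$w_n(t)=w_n(0)\exp\Big(2\int_0^t R_n(s)\,ds\Big),$$
which is valid because $R_n$ is continuous and bounded on $[0,T]$ for each $T<T_{\max}$, since $u\in C([0,T];\ell^\infty)$. This formula shows directly that the sign of $w_n$ is preserved, so $|u_n(t)|<1$. In fact I would base the whole argument on this representation: it is cleaner than appealing to uniqueness for the non-Lipschitz equation for $|u_n|$.

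\emph{Case $u_n(0)=0$.} This is where a possible obstacle arises, since \eqref{mod_intro} was only justified for moduli away from zero. However, $|u_n|^2=u_n\bar u_n$ is $C^1$ in $t$ regardless, and differentiating it using \eqref{al} gives
$$\frac{d}{dt}|u_n|^2=2\,\mathrm{Im}\big[\bar u_n(u_{n+1}+u_{n-1})\big](1-|u_n|^2)$$
identically, including at zeros of $u_n$. The real-valued linear ODE for $w_n$ therefore holds for every $n$ and every $t$. Consequently the exponential formula gives $w_n(t)\le 0$ whenever $w_n(0)\le 0$, without any Lipschitz issue.

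The main step to check is therefore this direct computation of $\frac{d}{dt}|u_n|^2$ from \eqref{al}. The linear terms contribute $2\,\mathrm{Im}[\bar u_n(u_{n+1}+u_{n-1})]$, because $-2|u_n|^2$ is real. The nonlinear term contributes the same quantity multiplied by $-|u_n|^2$. This factorization is exactly what makes $|u_n|=1$ invariant. Once it is verified, the uniform bound $\no{u(t)}_{\ell^\infty}\le1$ follows, and the theorem is a consequence of Theorem~\ref{lwp-t}(ii).
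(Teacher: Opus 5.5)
Your proof is correct. Its skeleton matches the paper's: show $|u_n(t)|\le 1$ for every $n$ and every $t\in[0,T_{\max})$, then invoke the alternative of Theorem~\ref{lwp-t}(ii). The difference lies in how you obtain the sitewise bound and how you close the argument.

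\textbf{The paper's route.} It invokes Corollary~\ref{inv-r}. The underlying Theorem~\ref{sep-t} rests on the scalar equation \eqref{BUPf7b}, $\dot y_n = g\sqrt{y_n}(y_n-1)$, where $g$ is defined through $\theta_n=\arg v_n$. The argument is a uniqueness argument stated only for data away from $y_n=0$. The paper then closes with a time-translation step that extends the solution to $[0,2T_{\max})$.

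\textbf{Your route.} You observe that $w_n=|u_n|^2-1$ satisfies the linear equation $\dot w_n=2R_n(t)w_n$, whose coefficient is merely continuous in $t$. The integrating factor then gives $w_n(t)=w_n(0)\exp\bigl(2\int_0^t R_n(s)\,ds\bigr)$ for every site at once. This covers $u_n(0)=0$, which the paper's polar formulation does not address explicitly. It also covers $|u_n(0)|=1$, so your separate appeal to Theorem~\ref{sep-t} there is unnecessary. No Lipschitz or uniqueness issue arises. You then read off $T_{\max}=\infty$ directly from the alternative, which is all that is needed.

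\textbf{What each buys.} Your argument is shorter and more robust. The paper's framework packages extra structural information, notably that $v_{n_*}$ is constant for sites on the unit circle, which it uses for the singular solutions. That fact also follows from your formula combined with \eqref{al2}. The paper itself uses exactly your $w_n$ computation later, in Example~6.
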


Theorem \ref{gwp-t} is consistent with the fact that the \textit{strict} inequality $\no{u(0)}_{\ell^\infty} < 1$ coupled with the rapid decay condition
$
u(0) \in \ell^{2,1} := \left\{ u=\{u_n\}_{n\in\z}: \sum_{n\in\z} (1+|n|^2)|u_n|^2 < \infty \right\}
$
(with $\ell^{2,1}$ being the discrete analogue of the space $H^{1,1}(\mathbb{R}$) used in \cite{dz2003}) constitutes the standard set of assumptions for equation~\eqref{al} to be solvable via the inverse scattering transform method \cite{apt2004, XiaFokas18, chf2024} in the case of zero boundary conditions. However, note that Theorem \ref{gwp-t} holds for all initial data satisfying the \textit{non-strict} inequality $\no{u(0)}_{\ell^\infty} \leq 1$, namely even if finitely or infinitely many lattice sites initially lie on the unit circle. Moreover, importantly, Theorem \ref{gwp-t} is valid for both zero and nonzero boundary conditions of the three types (1)-(3) described earlier (rather than just zero boundary conditions).

Yet another implication of the pointwise sign invariance of $|u_n(t)|-1$ noted above is that \textit{the source of any potential blow-up is limited to the sites where $|u_n(0)|>1$}. The main purpose of this work is the rigorous proof of this finite-time collapse scenario (see Section \ref{bu-s}). 

The first step in this direction is the construction of an analytical, singular in time,  fundamental solution arising from the simplest possible configuration: a single site with a modulus greater than one, referred to as the \emph{central site}, symmetrically framed by two neighbors pinned to stationary states of the AL equation. The simplest version of such a configuration, which nevertheless permits the remaining lattice sites to occupy arbitrary positions in the complex plane, assumes that the two sites directly adjacent to the central one are pinned to the unit circle. In the specific case where all sites except the central one lie on the unit circle, this setup represents a discrete $\delta$-function perturbation of the unit circle. The construction of this fundamental solution in Proposition \ref{FSS}, where we take (only for simplicity and without any loss of generality) the central site at $n=0$, makes essential use of the conservation of the localized energy functional  
\eee{
\label{E_loc}
E_{\textnormal{loc}}(t) := \left(u_{-1}(t) + u_1(t)\right) \overline u_0(t)
+
\left(\overline u_{-1}(t) + \overline u_1(t)\right) u_0(t)
=
E_{\textnormal{loc}}(0)
}
together with the sign condition  $R_0(0)>0$, where $R_0$ is defined by \eqref{rn}. The blow-up time of the fundamental singular solution is obtained explicitly in terms of the arbitrary initial central modulus $|u_0(0)|>1$. 

We remark that conservation laws, sign or monotonicity conditions on functionals, and their behavior in the vicinity of unstable stationary states are, in general, the main ingredients for identifying energy levels that may lead to finite-time singularities in evolution equations \cite{b1977, b1978}. In particular, in the context of the \emph{focusing} NLS equation, we refer the reader to \cite{c2003, t2006, ss1999}. 

In the second step towards the proof of finite-time collapse, we establish that the above strategy is also effective in the case of the defocusing AL equation \eqref{al}. Although the above ingredients were already manifested in Proposition~\ref{FSS} with the conservation law \eqref{E_loc} and the relevant conditions on $|u_0(0)|$ and $R_0(0)$, in Proposition~\ref{bu-p} we enrich considerably the class of singular solutions by relaxing the invariance conditions only to the two sites adjacent  to the central one, giving rise to the conservation of the new functional 
\eee{
\label{Jfun}
J(t) := 
\left|u_1(t) + u_{-1}(t)\right|^2 (|u_0(t)|^2 - 1) - R_0(t)^2
=
J(0).
}
The sign of $J(0)$ enables us to determine the dynamics of $|u_0(t)|$ and $R_0(t)$ and to construct new singular solutions that also feature  explicit blow-up times. This new class of singular solutions embraces a broad family of initial solitonic-type profiles with various rates of decay at infinity (e.g. exponential or algebraic) on either zero or nonzero backgrounds, as long as they satisfy the invariance conditions at the immediate  neighbors. These conditions actually determine the width of the initial solitonic profiles for any arbitrary amplitude greater than one. 

Furthermore, the analysis based on the functional \eqref{Jfun} allows us to also address the following important question which emerges naturally from Theorem \ref{gwp-t}:
\begin{center}
\textit{Do global-in-time solutions exist for initial data such that $\no{u(0)}_{\ell^\infty} > 1$, i.e.  when the closed unit ball threshold for global existence is violated?} 
\end{center}
In Proposition~\ref{ge-p}, we prove that there exist energy levels for initial data satisfying the invariance conditions at the immediate neighbors that guarantee not only the global existence of solutions, but also the convergence of the central site, even with an arbitrary initial amplitude $|u_0(0)|>1$, to the unit circle. In terms of global existence, this result is fully consistent with the construction of analytical solutions of the defocusing AL equation satisfying nonzero boundary conditions, namely classes of Kuznetsov-Ma (KM) rogue wave solutions, Akhmediev breathers or solitons on a nonzero background greater than one, obtained either via the so-called Grammian method \cite{oy2014} or the inverse scattering transform \cite{op2019, bckop2025, cp2026}. %
In terms of our $\ell^\infty$ phase space setting, all of these non-singular solutions $u(t)$ satisfy $\no{u(0)}_{\ell^\infty} > 1$. However, we remark that, within the class of solutions on an arbitrarily large background, analytical solutions $u(t)$ that satisfy $\no{u(0)}_{\ell^\infty} > 1$ and blow up in finite time, such as the singular KM breathers and Akhmediev breathers of the defocusing AL equation, have also been constructed \cite{oy2014, op2019, bckop2025, cs2024}. We remark that the dichotomy between global existence and blow-up for the   generalized AL lattice on a nonzero background and its connection with the quasi-collapse of the generalized discrete NLS equation were investigated in~\cite{hkmm2026} (and \cite{hkmm2025} for the continuous setting) via appropriate proximity estimates (see also \cite{lcmmkk2025}).

Consequently, in $\ell^\infty$, which is the largest phase space in terms  of uniformly bounded sequences to be considered for the AL dynamical system, the third and final step of the proof of finite-time blow-up becomes of essential importance, as it investigates the following question:  
\begin{center}
\textit{In light of the  specific orbits (manifolds)  in $\ell^\infty$ defined by analytical solutions $u(t)$ that satisfy  $\no{u(0)}_{\ell^\infty} > 1$ and exist globally in time, what is the dominant dynamics of general initial data outside the closed unit ball of $\ell^\infty$?}
\end{center}
Employing the set of singular solutions constructed in Propositions \ref{FSS} and \ref{bu-p}, we establish that the answer to this question is that \textit{the dynamics is dominated by blow-up in finite time}: 
\begin{theorem}[Finite-time blow-up]
\label{mainthe1}
Given initial data $u(0)\in\ell^\infty$ with $|u_0(0)|>1$, consider the corresponding solution $u\in C^1([0,T_{\max});\ell^\infty)$ of the AL equation \eqref{al} with maximal lifespan $T_{\max}$, as guaranteed by Theorem \ref{lwp-t}. Let $\mathcal S$ be the set of singular solutions $u^s$ of Propositions~\ref{FSS} and \ref{bu-p} such that 
\eee{\label{cond}
|u_0(0)| > |u_0^s(0)| > 1,
}
with associated blow-up times $T_{\max}^s <\infty$. 
Then, the only scenario in which $u$ \textbf{may} exist globally in time is if, \textbf{for every} $u^s \in \mathcal S$, there exists an $\ve^s = \ve(u^s) >0$  such that  $|u_0(t)| < |u_0^s(t)|$ for all $t\in (T_{\max}^s-\ve^s, T_{\max}^s)$. In all other cases, $u$ blows up in finite time, i.e. $T_{\max} < \infty$ and $\displaystyle \lim_{t \to  T_{\max}} \no{u(t)}_{\ell^\infty} = \infty$.
\end{theorem}

\begin{remark}[General initial data]
Theorem \ref{mainthe1} covers the case of general initial data, in the sense of any data in $\ell^\infty$ except for those that correspond to specific orbits associated with analytical solutions that may persist for arbitrary time (for example, see Proposition \ref{ge-p} and other exact solutions with amplitude above unit modulus~\cite{oy2014, op2019, bckop2025, cp2026}). 
 \end{remark}
 
The scenarios covered by Theorem \ref{mainthe1} are illustrated by Figure \ref{seq}. We emphasize that the choice of the central site at $n=0$ is arbitrary and hence the result of Theorem \ref{mainthe1} applies  to general initial data that meet the hypothesis at a different site. 
The key idea behind the proof of Theorem~\ref{mainthe1} is that the set $\mathcal{S}$ serves as a family of subsolutions for general initial data satisfying the condition \eqref{cond}. This allows us to extend the blow-up dynamics well beyond the explicitly solvable examples that define the manifolds of globally existing solutions in the $\ell^\infty$ phase space. Moreover, when establishing the sole scenario that allows for global existence, we analyze the behavior of the integral curve $|u_0(t)|$ of a potentially non-singular solution and how it may escape from every $u^s \in \mathcal{S}$. This is achieved by exploiting the behavior of the analytical blow-up times of $u^s$ as the initial amplitude $|u_0^s(0)|$ approaches the unit circle (see Figure \ref{seq}).

A practical application of Theorem \ref{mainthe1} is the following: since, in general, the set $\mathcal{S}$ acts as a class of subsolutions for solutions satisfying the assumptions of Theorem \ref{mainthe1}, we expect that applying the blow-up criteria of $u^s$ to general initial data can serve as an \textit{empirical, heuristic ``rule of thumb''} for determining their singular dynamics. This allows us to select a suitable subsolution $u^s$ whose analytical blow-up time serves as an upper bound satisfying $T_{\max} < T^{s}_{\max}$.

The theoretical implications as well as the practical applications of Theorem \ref{mainthe1} are illustrated numerically in Section \ref{num-s} by utilizing suitable {\em time-adaptive} 
numerical schemes whose accuracy is also discussed in detail. The  corresponding  numerical studies are  performed with several representative initial conditions and illustrate the power of our theoretical results: all solutions emanating from initial data that satisfy the general blow-up criteria are indeed found to exhibit blow-up in finite time, consistent with (and, in fact, strictly earlier than) the blow-up time of the selected singular solution $u^s \in \mathcal{S}$ acting as their subsolution. We note that Section \ref{num-s} also provides further analytical results that complement and explain the numerical findings.

\section{Well-posedness in $\ell^\infty$ and the unit circle as a separatrix}
\label{wp-s}

In this section, we establish the local well-posedness of the  initial value problem of the AL equation~\eqref{al} in the case of initial data 
$
u(0):=(u_n(0))_{n\in\l}\in \ell^\infty$. 
Note that this class of data can accommodate any type of zero or nonzero boundary conditions at infinity, as outlined by cases (1)-(3) in the introduction. Then, we show that the unit circle acts as a separatrix for the dynamics at the level of individual lattice sites (Theorem \ref{sep-t}), with global well-posedness on the closed $\ell^\infty$ unit ball as an immediate consequence (Theorem \ref{gwp-t}).

\subsection{Local well-posedness}

We begin with the proof of the local well-posedness result of Theorem \ref{lwp-t}, which generalizes (in the defocusing regime) the corresponding result of \cite{hkmm2026} to any initial condition in $\ell^\infty$ (i.e. without any decay/smoothness requirements).

\begin{proof}[Proof of Theorem \ref{lwp-t}]
(i) We regard the AL equation \eqref{al} as the abstract ordinary differential equation 
\begin{equation}
	\label{fl1}
	\frac{du}{dt} = f(u),
   \quad
f(u):= i \left(-\Delta u+\mathcal{F} u\right),
\end{equation}
where $\Delta u = (\Delta u)_{n\in\l} := u_{n+1} - 2u_n+u_{n-1}$ is the discrete Laplacian and the nonlinear operator $\mathcal{F} u$ is defined by 
\begin{equation}
\label{op2}
\mathcal{F} u = (\mathcal{F} u)_{n\in\l}:=(u_{n+1}+u_{n-1})|u_n|^2. 
\end{equation} 
Integrating \eqref{fl1} with respect to $t$, we have
\begin{equation}
	\label{mildsl2}
	u(t)=e^{-i\Delta t}u(0)+i\int_{0}^{t}e^{-i\Delta(t-s)}\mathcal{F}u(s)ds,
\end{equation}

For the proof of local well-posedness, we shall apply the generalized Picard-Lindel\"{o}f theorem~\cite[Theorem~3.A, p. 78]{z1986} to the integral formula \eqref{mildsl2}. 
Indeed, note that $\Delta$ is a bounded linear operator from $\ell^\infty$ to  $\ell^\infty$, since for any $u \in \ell^\infty$ we have 
\begin{equation}
	\label{op1}
	\no{\Delta u}_{\ell^\infty}\leq 4\no{u}_{\ell^\infty}.
\end{equation}
In addition, $\Delta$ is globally Lipschitz continuous, since for any $u, v \in \ell^\infty$ we have 
\begin{equation}
	\label{op1a}
	\no{\Delta u-\Delta v}_{\ell^\infty}\leq 4\no{u-v}_{\ell^\infty}.
\end{equation}
The continuity of the operator $\Delta:\ell^\infty\rightarrow\ell^\infty$ implies that the propagator $e^{-i\Delta t}$ belongs to the space $C^{\infty}(\mathbb{R},\mathcal{B}(\ell^\infty))$, where $\mathcal{B}(\ell^\infty)$ is the space of linear bounded operators from $\ell^\infty$ to $\ell^\infty$ (see \cite[Section 1.2, p.~2]{ch1998}).
More specifically, writing 
$
e^{-i \Delta t} u = \sum_{n=0}^\infty \frac{\left(-it\right)^n}{n!} \Delta^n u_n 
$
and noting that $\ell^\infty$ is a Banach algebra, we use the linearity of $\Delta$ from $\ell^\infty$ to $\ell^\infty$ (see \eqref{op1}) to infer 
\aaa{
\no{e^{-i \Delta t}}_{\mathcal B(\ell^\infty)} 
\equiv
\sup_{\no{u}_{\ell^\infty}\leq 1} \no{e^{-i \Delta t} u}_{\ell^\infty} 
\leq
\sup_{\no{u}_{\ell^\infty} \leq 1}
\sum_{n=0}^\infty \frac{\left|-it\right|^n}{n!} \no{\Delta^n u}_{\ell^\infty} 
\leq
\sup_{\no{u}_{\ell^\infty} \leq 1}
\sum_{n=0}^\infty \frac{t^n}{n!} \no{\Delta}_{\mathcal B(\ell^\infty)}^n \no{u}_{\ell^\infty} 
\leq
\sum_{n=0}^\infty \frac{(4t)^n}{n!}.
\nn
}
As the right side converges absolutely to $e^{4t}$,  $e^{-i \Delta t}$ defines a continuous operator for any $t\in\r$. 
Thus, in order to invoke the generalized Picard-Lindel\"{o}f theorem for \eqref{mildsl2}, it only remains to establish the local Lipschitz continuity of the operator $\mathcal{F}:\ell^\infty\rightarrow\ell^\infty$.

Letting $B_R(0)$ denote the closed ball of center $0$ and radius $R$ in $\ell^\infty$, for any $u\in B_R(0)$ we have 
\begin{equation}
	\label{op3}
\no{\mathcal{F} u}_{\ell^\infty}\leq 2\no{u}_{\ell^\infty}^3\leq 2R^3, 
\end{equation}
hence, as an operator, $\mathcal{F}:\ell^\infty\rightarrow\ell^\infty$ is well defined and locally bounded. Moreover, for any $u, v \in B_R(0)$, 
\begin{align}
\mathcal{F} u - \mathcal{F} v
&=(u_{n+1}+u_{n-1})|u_n|^2-(v_{n+1}+v_{n-1})|v_n|^2\nonumber\\
&=(u_{n+1}+u_{n-1})(|u_n|^2-|v_n|^2)+|v_n|^2\left[(u_{n+1}-v_{n+1})
+(u_{n-1}-v_{n-1})\right]
\nn
\end{align}
thus, via the elementary inequality 
$
	\left| |u_n|^2-|v_n|^2\right| = (|u_n|+|v_n|)\big| |u_n|-|v_n|\big|\leq 
	(|u_n|+|v_n|)|u_n-v_n|$,
    we obtain
\begin{equation}
	\label{op5}
	\no{\mathcal{F} u-\mathcal{F} v}_{\ell^\infty}\leq
	2\left( \no{u}_{\ell^\infty}^2+\no{v}_{\ell^\infty}^2+ \no{u}_{\ell^\infty} \no{v}_{\ell^\infty}\right) \no{u-v}_{\ell^\infty}
\leq 6R^2\no{u-v}_{\ell^\infty},
\end{equation}
which shows that $\mathcal{F}:\ell^\infty\rightarrow\ell^\infty$ is indeed locally Lipschitz continuous.

Since the hypothesis of the aforementioned  generalized Picard-Lindel\"{o}f theorem is satisfied for the integral formula \eqref{mildsl2}, given any initial datum $u(0)\in \ell^\infty$ we can choose $T>0$ such that there exists a unique solution $u\in C^1([0, T]; \ell^\infty)$ to the initial value problem for the AL equation \eqref{al}. Furthermore, this solution depends continuously on $u(0)$ with respect to the norm of $C([0, T]; \ell^\infty)$. The maximal interval of existence is $[0, T_{\max})$, where $T_{\max}>0$ is the supremum of the $T$'s such that the solution exists on $[0, T]$.
\\[2mm]
(ii) The stated alternative follows from the seminal work of Segal \cite[Theorem 1, p. 343]{s1963}. %
\end{proof}

\begin{remark}\label{c1-r}
As a direct consequence of Theorem \ref{lwp-t}, $u_n \in C^1([0, T_{\max}))$ for all $n\in\l$. Indeed, since $u\in C^1([0, T_{\max}),\ell^\infty)$, we have $\lim_{t\to a} \left\| u(t) - u(a) \right\|_{\ell^\infty} = 0$, i.e. given $\ve>0$ there is a $\delta>0$ such that  
$
\left\| u(t) - u(a) \right\|_{\ell^\infty} < \ve$
whenever $|t-a|<\delta$. 
Hence, by the definition of the $\ell^\infty$ norm, for all $n\in \l$ we have $\left| u_n(t) - u_n(a) \right| < \ve$ whenever $|t-a|<\delta$, 
which shows that $u_n \in C([0, T_{\max}))$ for all $n\in\l$. Similarly, it follows that  $u_n \in C^1([0, T_{\max}))$ for all $n\in\l$.
\end{remark}

\subsection{The unit circle as a separatrix for the dynamics}
\label{sep-ss}

We begin by emphasizing that the AL equation \eqref{al} is an autonomous system. This is an important property that will be used crucially in our analysis and, in particular, when considering the steady states of the system and their role in the associated dynamics. Hence, as noted in \cite[Section 1.7]{h1980}, for any $a \in \r$, if $u(t)$ is a solution of~\eqref{al} on $[0, T]$, then $u(t-a)$ is a solution of the system on $[a, T+a]$. 
Due to the importance of this property for our arguments, we explicitly illustrate its validity for the AL system \eqref{al}.

Suppose we have a solution $z(t)\in C^1([0,T_{\max});\ell^\infty)$ that is defined on an interval $[a, T+a]$.  We define a new time variable $s$ and a new solution function $u(s)$ as follows. \uline{Time shift}: $s = t - a\Rightarrow t = s + a$; \uline{Solution shift}: $u(s) = z(s + a)$. 
Under this change of variables, the time $t = a$ corresponds to $s = 0$ and $u(0) = z(0 + a) = z(a)$. 
In order to verify that $u(s)$ satisfies the original AL equation \eqref{al}, we compute the derivative of $u$ with respect to the new time variable $s$. For each $n\in\l$, we have
$$
	\frac{du_n}{ds} = \frac{d}{ds}z_n(s + a)=\frac{d}{dt} z_n(s + a) \cdot \frac{dt}{ds} = \frac{d}{dt} z_n(s + a) \cdot  \frac{d}{ds}(s + a) = \frac{d}{dt} z_n(s + a).
$$
Thus, since $z(t)$ satisfies \eqref{al},  we obtain
\aaa{
	i\frac{du_n}{ds} &= \left[(z_{n+1}-2z_n+z_{n-1})- |z_n|^2  (z_{n+1}+z_{n-1})\right] \Big|_{t=s+a}
    \nn\\
    &\equiv\left(u_{n+1}(s)-2u_n(s)+u_{n-1}(s)\right)- |u_n(s)|^2  \left(u_{n+1}(s)+u_{n-1}(s)\right), 
    \nn
}
concluding that $u$ satisfies \eqref{al} on $[0, T]$.

Having illustrated the autonomy property of the AL system  \eqref{al}, we are ready to establish the following result, which demonstrates the critical role of the unit circle steady state as a threshold separating solutions within the AL dynamics. 

\begin{theorem}
\label{sep-t} 
Given $u(0) \in \ell^\infty$,  consider the corresponding local-in-time unique solution   $u\in C^1([0, T_{\max}), \ell^\infty)$ to the AL equation~\eqref{al} guaranteed by Theorem \ref{lwp-t}. 
\\[1.5mm]
\textnormal{(i)} 
If $|u_{n_*}(t)| = 1$ for some  $n_* \in \l$ and all $t\in (0, T_{\max})$, then $|u_{n_*}(0)| = 1$.
\\[1.5mm]
\textnormal{(ii)} 
Conversely, if $|u_{n_*}(0)| = 1$ for some $n_* \in \l$, then $|u_{n_*}(t)| = 1$  for all $t\in [0, T_{\max})$.
\\[1.5mm]
\textnormal{(iii)} \textnormal{(Unit modulus threshold)}
If $|u_n(0)|< 1$ (respectively $|u_n(0)|>1$), then $|u_n(t)| < 1$ (respectively $|u_n(t)|>1$) for all $t \in [0, T_{\max})$.
\end{theorem}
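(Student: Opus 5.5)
Everything rests on the modulus equation \eqref{mod_intro}. Multiplying \eqref{al} at site $n$ by $\overline u_n$, taking imaginary parts and using that $|u_n|^2$ and $-2|u_n|^2$ are real, we get
\begin{equation*}
\frac{d}{dt}|u_n|^2 = 2\,\mathrm{Re}\big(\overline u_n \tfrac{du_n}{dt}\big) = -2(1-|u_n|^2)\,\mathrm{Im}\big[(u_{n+1}+u_{n-1})\overline u_n\big] = 2R_n(t)\big(|u_n|^2-1\big).
\end{equation*}
By Theorem \ref{lwp-t} and Remark \ref{c1-r}, each $u_n$ is $C^1$ on $[0,T_{\max})$ and $\sup_{[0,T]}\|u(t)\|_{\ell^\infty}<\infty$ for every $T<T_{\max}$. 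Hence $R_n$ is continuous and bounded on $[0,T]$. The key step is to regard this as a \emph{linear} scalar ODE for $w_n(t):=|u_n(t)|^2-1$, namely $\dot w_n = 2R_n(t)w_n$, with $R_n$ a given continuous coefficient obtained from the known solution. This gives the explicit representation
\begin{equation*}
|u_n(t)|^2-1 = \big(|u_n(0)|^2-1\big)\exp\Big(2\int_0^t R_n(s)\,ds\Big), \qquad t\in[0,T_{\max}).
\end{equation*}
It holds because $\frac{d}{dt}\big[w_n(t)e^{-2\int_0^tR_n}\big]=0$, which only requires $u_n\in C^1$. This route sidesteps the non-Lipschitz issue at zero mentioned after \eqref{mod_intro}: since $w_n$ rather than $|u_n|$ is the unknown, no uniqueness theorem for a nonlinear ODE is needed.

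Parts (ii) and (iii) follow at once. The exponential factor is finite and strictly positive on $[0,T_{\max})$. So $|u_{n_*}(0)|=1$ forces $w_{n_*}\equiv 0$, which is (ii). For (iii), $\mathrm{sign}\,w_n(t)=\mathrm{sign}\,w_n(0)$ for all $t$, so $|u_n(0)|<1$ gives $|u_n(t)|<1$ and $|u_n(0)|>1$ gives $|u_n(t)|>1$.

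For (i), we could argue by continuity: $|u_{n_*}|$ is continuous on $[0,T_{\max})$ and equals $1$ on $(0,T_{\max})$, so letting $t\to0^+$ gives $|u_{n_*}(0)|=1$. Alternatively, the representation formula shows that if $w_{n_*}(0)\neq0$ then $w_{n_*}(t)\neq0$ for all $t$, contradicting the hypothesis. The autonomy of \eqref{al} established above could be invoked to restart the argument from any intermediate time $a$, giving the statement on every subinterval, but the integral formula already handles this directly.

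The only potential obstacle is justifying the derivative computation and the integrability of $R_n$ up to any $T<T_{\max}$. Both are secured by the $\ell^\infty$ bound on compact subintervals from Theorem \ref{lwp-t} together with Remark \ref{c1-r}. I expect no genuine difficulty beyond this.
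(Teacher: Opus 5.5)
Your proof is correct, but it takes a genuinely different and more elementary route than the paper's.

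\emph{The paper's route.}
\begin{itemize}
\item For (i), it notes that $|u_{n_*}|\equiv 1$ decouples site $n_*$ into $i\dot{u}_{n_*}=-2u_{n_*}$, solves this, and lets $t\to 0^+$.
\item For (ii), it passes to $v_n=e^{-2it}u_n$ and writes the modulus equation in polar form as $\dot{y}_n=g\sqrt{y_n}\,(y_n-1)$ with $y_n=|v_n|^2$. It then invokes Picard--Lindel\"of uniqueness, using that the right side is Lipschitz away from $y_n=0$, to get $y_{n_*}\equiv 1$.
\item For (iii), it uses a crossing-time argument: reaching $|u_n(t_*)|=1$ would violate that same uniqueness.
\end{itemize}

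\emph{Your route.} You treat $R_n(t)$ as a known continuous coefficient and solve the \emph{linear} equation $\dot{w}_n=2R_n w_n$ by an integrating factor. All three parts then follow from the single formula $w_n(t)=w_n(0)\exp\bigl(2\int_0^t R_n\bigr)$. In fact $g\sqrt{y_n}$ is exactly $2R_n(t)$, so the non-Lipschitz point $y_n=0$ in the paper's formulation comes only from splitting off $\sqrt{y_n}$; your formulation removes it.

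\emph{What each buys.} Your route needs no nonlinear uniqueness theory. It avoids the polar coordinates, whose phase $\arg v_n$ requires $v_n\neq 0$. It also avoids continuing backward from the crossing time. On top of that it gives a quantitative representation, which is the very formula the paper itself uses later in Example 6. The paper's route additionally yields $v_{n_*}(t)\equiv v_{n_*}(0)$, i.e.\ $u_{n_*}(t)=e^{2it}u_{n_*}(0)$, which Section 3 uses for the pinned neighbours $n=\pm 1$. You recover this immediately by inserting $|v_{n_*}|\equiv 1$ into \eqref{al2}. For (i), your direct continuity argument is the same idea as the paper's limit $t\to 0^+$, without the detour through the decoupled ODE.

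One small slip: the middle expression in your first display should be $2(1-|u_n|^2)\,\mathrm{Im}\bigl[(u_{n+1}+u_{n-1})\overline{u}_n\bigr]$. As written it is the negative of your (correct) final expression. This is immaterial, since the integrating-factor argument works for any continuous coefficient.
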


\begin{proof}
(i) 
Since $|u_{n_*}(t)|=1$ for some $n_* \in \l$ and all $t\in (0, T_{\max})$, and since $u_{n_*+1}(t)$, $u_{n_*-1}(t)$ exist and are bounded as $C^1$ functions of $t$ on the interval $[0, T_{\max})$ (see Remark \ref{c1-r}),  we may evaluate \eqref{al} at $n=n_*$ to deduce that the site $u_{n_*}$ satisfies the \textit{decoupled} linear ODE
\begin{equation}
\label{e:ALuD1}
i\frac{du_{n_*}}{d t}=-2u_{n_*},
\quad t\in (0, T_{\max}),
\end{equation}
whose general solution is given by
\begin{equation}
\label{e:ALuD2}
u_{n_*}(t) = e^{2it} C_{n_*},
\quad
t\in (0, T_{\max}),
\end{equation}
where $C_{n_*}\in\mathbb{C}$ is an arbitrary constant.

The fact that $u_n \in C^1[0, T_{\max})$ for all $n\in\l$ guarantees that $\lim_{t \to  0^+}u_{n_*}(t)=u_{n_*}(0)$. Hence, we may pass to the limit of the solution~\eqref{e:ALuD2} as $t \to  0^+$ to infer that $C_{n_*}=u_{n_*}(0)$ and, in turn,
\begin{equation}
\label{e:ALuD3}
u_{n_*}(t)=e^{2it} u_{n_*}(0), \quad t\in [0,T_{\max}).
\end{equation}
In particular, this yields $|u_{n_*}(t)|=|u_{n_*}(0)|$ for all $t\in [0,T_{\max})$, which readily implies $|u_{n_*}(0)|=1$.
\\[2mm]
(ii) 
In general, the evolution of each site $u_n(t)$ is associated with its initial condition $u_n(0)$ as well as the evolution of its neighboring sites $u_{n+1}(t)$, $u_{n-1}(t)$. We shall now prove that if $|u_{n_*}(0)| = 1$ for some $n_* \in \l$, then the evolution of $u_{n_*}(t)$ is not affected by its immediate neighbors $u_{n_*+1}(t)$, $u_{n_*-1}(t)$.

Letting 
\eee{\label{vdef}
u_n(t) = e^{2it} v_n(t)
}
turns equation \eqref{al} into 
\begin{equation}\label{al2}
i\frac{dv_n}{dt}=(v_{n+1}+v_{n-1})(1-|v_n|^2).
\end{equation}
Since $u\in C^1([0, T_{\max}), \ell^\infty)$ by Theorem \ref{lwp-t}, we have $v_{n+1}(t), v_{n-1}(t) \in C^1[0, T_{\max})$. Hence, recalling also that $u(0) \in \ell^\infty$, there exist constants $M_{n+1}(T_{\max}), M_{n-1}(T_{\max})$ such that
\begin{equation}
\label{BUFP5}
|v_{n+1}(t)|<M_{n+1}(T_{\max}),\;\;|v_{n-1}(t)|<M_{n-1}(T_{\max}),\quad t\in [0,T_{\max}).
\end{equation}

Now, consider the initial value problem for \eqref{al2}, namely
\eee{	\label{BUPf6}
	\frac{d}{dt}v_{n}(t) =i \left(v_{n+1}(t)+v_{n-1}(t)\right)(|v_{n}(t)|^2-1),
    \quad
     v_{n}(0)=v_{n,0},
     }
as well as the corresponding 	problem for the modulus $|v_{n}(t)|$:
\eee{
	\label{BUPf7} 
	\frac{d}{dt}|v_{n}(t)|^2=
	-2 \, \text{Im} \left[\left(v_{n+1}(t)+v_{n-1}(t)\right)\overline v_{n}\right](|v_{n}(t)|^2-1),
    \quad
	|v_{n}(0)|=|v_{n,0}|.
	}
	Letting $y_{n} = |v_{n}|^2$ and substituting $v_{n} = \sqrt{y_{n}} \, e^{i\theta_{n}}$ with $\theta_{n} = \arg(v_{n})$ allows us to write  \eqref{BUPf7} in the form
\eee{
\label{BUPf7b}  
\frac{d}{dt}y_{n}
=
g(v_{n-1}, v_{n+1}, \theta_{n}) \sqrt{y_{n}} \left(y_{n}-1\right),
\quad
y_{n}(0) = y_{n, 0},
}
where $g(v_{n-1}, v_{n+1}, \theta_{n}) := -2 \, \mathrm{Im}\left[(v_{n+1}+v_{n-1}) \, e^{-i\theta_{n}} \right]$. 
Note that $g$ is $C^1[0, T_{\max})$ as a function of $t$ and, from~\eqref{BUFP5}, 
\eee{
\left|g(v_{n-1}, v_{n+1}, \theta_{n})\right|
\leq
2 \left(|v_{n+1}|+|v_{n-1}|\right) 
\leq
2 \left[M_{n-1}(T_{\max}) + M_{n+1}(T_{\max})\right].
}
Moreover, the nonlinearity $\sqrt{y_{n}} \left(y_{n}-1\right)$ is locally Lipschitz except at $y_{n} = 0$. 

Therefore, for any initial condition $y_{n, 0}$ away from $0$, by the generalized Picard-Lindel\"{o}f Theorem \cite[Theorem~3.A]{z1986} the initial value problem \eqref{BUPf7b} has a unique solution $y_{n} \in C^1[0, T_{\max})$.
In particular, since $y_{n_*, 0} \equiv |v_{n_*, 0}|^2 = 1$, we infer that the unique solution of \eqref{BUPf7b} at $n=n_*$ is $y_{n_*}(t) \equiv 1$, namely 
\eee{
\left|v_{n_*}(t)\right| \equiv 1, \quad t\in [0, T_{\max}).
}
In turn, the initial value problem \eqref{BUPf6} at $n=n_*$ takes the form
\eee{\label{BUPf62}
\frac{dv_{n_*}}{dt} = 0,
\quad 
v_{n_*}(0) =v_{n_*,0},
}
implying that $v_{n_*}(t) = v_{n_*, 0}$ for all $t\in [0,T_{\max})$. In turn, $|u_{n_*}(t)| = |u_{n_*}(0)| = 1$ for all $t\in [0, T_{\max})$, as claimed. 
\\[2mm]
(iii) Given $n\in \l$, if $|u_n(0)|< 1$ but $|u_n(t)|$ were not strictly less than $1$ for all  $t \in (0, T_{\max})$, then by continuity in $t$ there would be a time $t_* \in (0, T_{\max})$ such that $|u_{n}(t_*)| = 1$. However, this would violate the uniqueness of solutions to the modulus equation \eqref{BUPf7b} for initial conditions away from $0$ (see proof of part (ii) above). Hence, it must be that  $|u_n(t)|< 1$ for all $t\in (0, T_{\max})$. 
A similar argument holds if $|u_n(0)|>1$.
\end{proof}

As a direct consequence of Theorem \ref{sep-t}, we have
\begin{corollary}[Unit circle as a separatrix]
\label{inv-r}
Given $u(0)\in \ell^\infty$, any infinite or finite lattice $\l$ associated with the AL equation \eqref{al} can be decomposed into $\l = \mathcal{N}_b\cup\mathcal{N}_a\cup \mathcal{N}_s$, where 
\eee{\label{n-def}
\mathcal{N}_b = \left\{n \in \l: |u_{n}(0)|<1\right\}, 
\quad
\mathcal{N}_s = \left\{n \in \l: |u_{n}(0)|=1\right\},
\quad
\mathcal{N}_a = \left\{n \in \l: |u_{n}(0)|>1\right\}.
}
Then, the relevant local solution $u\in C^1([0, T_{\max}), \ell^\infty)$ guaranteed by Theorem \ref{lwp-t} has the following property: for all $t\in [0, T_{\max})$, 
\textnormal{(i)} $|u_{n}(t)| < 1$ for all $n\in \mathcal{N}_b$, 
\textnormal{(ii)} $|u_n(t)| = 1$ for all $n\in\mathcal{N}_s$, and 
\textnormal{(iii)} $|u_n(t)|>1$  for all $n\in\mathcal{N}_a$.
\end{corollary}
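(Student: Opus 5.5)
The corollary is a bookkeeping consequence of Theorem~\ref{sep-t}, applied site by site. First, we note that the three sets in \eqref{n-def} are pairwise disjoint and their union is all of $\l$. This holds because for each fixed $n\in\l$ exactly one of $|u_n(0)|<1$, $|u_n(0)|=1$, $|u_n(0)|>1$ is true, by trichotomy for the real number $|u_n(0)|$. This gives the decomposition $\l=\mathcal N_b\cup\mathcal N_a\cup\mathcal N_s$ for any $u(0)\in\ell^\infty$, on infinite and finite lattices alike.

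Second, we fix the unique local solution $u\in C^1([0,T_{\max}),\ell^\infty)$ from Theorem~\ref{lwp-t}. By Remark~\ref{c1-r}, each component $u_n$ belongs to $C^1([0,T_{\max}))$. We then treat each membership in turn.
\begin{enumerate}[label=(\roman*)]
\item For $n\in\mathcal N_s$, Theorem~\ref{sep-t}(ii) with $n_*=n$ gives $|u_n(t)|=1$ on $[0,T_{\max})$.
\item For $n\in\mathcal N_b$, Theorem~\ref{sep-t}(iii) gives $|u_n(t)|<1$ on $[0,T_{\max})$.
\item For $n\in\mathcal N_a$, Theorem~\ref{sep-t}(iii) gives $|u_n(t)|>1$ on $[0,T_{\max})$.
\end{enumerate}
In each case $t=0$ is included trivially from the definition of the set. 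Since these conclusions hold for every $n$ individually, they hold simultaneously for all $n$ in each set and all $t\in[0,T_{\max})$.

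There is no genuine obstacle here. The only point needing care is that Theorem~\ref{sep-t}(iii) relies on uniqueness for the modulus equation \eqref{BUPf7b}, which is available only for data away from $y_n=0$. For $n\in\mathcal N_b$ with $u_n(0)=0$, or with $u_n$ passing through zero, we must check that the argument still applies. The argument only uses uniqueness at the level set $y_n=1$: if $|u_n|$ reached $1$ at some $t_*$, we would solve the modulus equation backward and forward from $t_*$ with data $y_n(t_*)=1$. By autonomy of \eqref{al}, shown above via the time shift, together with local Lipschitz continuity of $\sqrt{y}(y-1)$ near $y=1$, this forces $y_n\equiv1$ on a neighborhood of $t_*$. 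Continuing this along the connected interval gives a contradiction with $y_n(0)\neq1$. The behaviour near zero is therefore irrelevant, and I would add one sentence making this explicit before concluding.
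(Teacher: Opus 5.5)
Your proof is correct and takes the same route as the paper, which records the corollary as a direct consequence of Theorem~\ref{sep-t} applied site by site once the trichotomy $|u_n(0)|<1$, $=1$, $>1$ gives the decomposition. Your extra paragraph on zeros of $u_n$ is sound but not needed, since Theorem~\ref{sep-t}(iii) is already stated for every $n$; a cleaner way to settle that point (as the paper does in Example~6) is to note that $w_n=|v_n|^2-1$ solves the linear equation $\dot{w}_n=-2g_n(t)w_n$ with $g_n=\mathrm{Im}\bigl(\overline{v}_n(v_{n+1}+v_{n-1})\bigr)$ continuous, so $w_n(t)=w_n(0)\exp\bigl(-2\int_0^t g_n(s)\,ds\bigr)$ keeps its sign regardless of whether $v_n$ vanishes.
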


\begin{remark}
The invariance of the open unit disc under the defocusing AL flow is, in the finite and square-summable settings, a classical fact rooted in
the theory of orthogonal polynomials on the unit circle (OPUC). There, the coefficients $u_n$ play the role of Verblunsky coefficients, which satisfy $|u_n|<1$ for any nontrivial probability measure on the unit circle, and equation \eqref{al} arises as an isospectral deformation of the associated unitary five-diagonal CMV matrix, which provides a Lax representation of the AL lattice \cite{Nen05,KN07}; see also \cite{Sim05} for the general OPUC framework. The factor $1-|u_n|^2$ appearing in equation~\eqref{mod_intro}, which is responsible for the invariance of the unit circle established below, also enters the non-canonical Poisson structure of the defocusing AL system~\cite{GN09}. Closely related is the Schur flow $\dot u_n=(1-|u_n|^2)(u_{n+1}-u_{n-1})$, introduced in~\cite{AG94} and studied in~\cite{FG99,Gol06,Sim07}, for which the same factor confines the dynamics to the unit disc. In contrast to these works, where the phase space is (a product of copies of) the open unit disc, Theorems~\ref{lwp-t},  \ref{gwp-t} and \ref{sep-t} hold in $\ell^\infty$ without any decay assumptions, and they also allow lattice sites to lie on or outside the unit circle; it is precisely the latter regime that gives rise to the blow-up and equilibration phenomena studied in this work.
\end{remark}

\subsection{Global well-posedness on the closed unit ball}

We now combine  Theorem \ref{sep-t} and  Corollary \ref{inv-r} in order to establish the global well-posedness result of Theorem~\ref{gwp-t}.
\begin{proof}[Proof of Theorem \ref{gwp-t}]
In view of Corollary \ref{inv-r}, the assumption $\no{u(0)}_{\ell^\infty}\leq 1$ on the initial data implies that $\l=\mathcal{N}_b\cup\mathcal{N}_s$ and $|u_{n}(t)|\leq 1$ for all $n\in \l$ and all $t\in [0, T_{\max})$. 
We argue by contradiction and suppose that the maximal time of existence is finite, i.e. $T_{\max}<\infty$. Then, according to the blow-up alternative of Theorem~\ref{lwp-t}, 
\begin{equation}
\label{arg1}
\lim_{t \to  T_{\max}}\no{u(t)}_{\ell^\infty}=\infty.
\end{equation}
By Theorem \ref{lwp-t}, for any $T \in [0, T_{\max})$ we have $u(T) \in \ell^\infty$.
Then, considering the AL equation \eqref{al} with initial condition $u(T)$, by the autonomy property stated in the beginning of Section \ref{sep-ss} we infer that  $u(t-T)$ is a solution to \eqref{al} that belongs to $C^1([T, 2T]; \ell^\infty)$. Furthermore, Corollary~\ref{inv-r} implies that $\no{u(t-T)}_{\ell^\infty} \leq 1$ for all $t\in [T, 2T]$. 
Thus, overall, there exists a solution $u \in C^1([0, 2T], \ell^\infty)$ with $\no{u(t)}_{\ell^\infty} \leq 1$ for all $t\in [0, 2T]$. 
Taking supremum over $T$ while recalling that $T_{\max}$  is defined in Theorem \ref{lwp-t} as the supremum of the times $T>0$ such that the solution exists on $[0, T]$, we deduce that $u \in C^1([0, 2T_{\max}), \ell^\infty)$ with $\no{u(t)}_{\ell^\infty} \leq 1$ for all $t\in [0, 2T_{\max})$, leading to a contradiction of the maximality of $T_{\max}$ as well as of \eqref{arg1}.
\end{proof}

\section{Blow-up in finite time}
\label{bu-s}

Theorem \ref{gwp-t} guarantees global existence for any initial datum such that $|u_n(0)|\leq 1$ for all $n\in \l$.
On the other hand, even in the case of an initial datum with $\mathcal{N}_a=\{n_a\}$ (recall the definition \eqref{n-def}), i.e. when just a single site   initially has $|u_{n_a}(0)|>1$, global existence is not guaranteed. Indeed, in this latter case, Corollary \ref{inv-r} only ensures that the modulus of the solution at each site in $\mathcal N_a$ remains outside the unit circle. 
An important question on the potential blow-up of solutions of evolution equations concerns \textit{when and how}. For the defocusing AL equation~\eqref{al}, according to Corollary \ref{inv-r} blow-up can be driven only by the sites $n \in \mathcal{N}_a$, since the sites $n\in\mathcal{N}_b\cup\mathcal{N}_s$ satisfy  $|u_n(t)|\leq 1$ for all $t\in (0, T_{\max})$ i.e. they  never escape the closed unit circle.  Therefore, blow-up can actually be manifested via point or localized singularities.  In this connection, we next construct explicit singular solutions with $\mathcal{N}_a=\{n_a\}$ that blow up in finite time. 

\subsection{The fundamental singular solution}
\label{bu-ss}

Suppose hereafter that $u(0) \in \ell^\infty$ with  $|u_{-1}(0)| = |u_1(0)| = 1$ (note, however, that the construction below can also be carried out for any initial configuration which guarantees that $|u_{\pm 1}|$ remain stationary). Then, by Theorem~\ref{sep-t}, $|u_{-1}(t)| = |u_1(t)| = 1$ for all $t \in [0, T_{\max})$. Equivalently, for $v(t)$ given by \eqref{vdef},  $|v_{-1}(t)| = |v_1(t)| = 1$ for all $t \in [0, T_{\max})$ and, more specifically, by \eqref{BUPf62} it follows that
$
v_{\pm 1}(t) = v_{\pm 1}(0) = \text{const.}$ for all $t\in [0, T_{\max})$.
Thus, evaluating equation~\eqref{al2}  at $n=0$, we obtain
\begin{equation}\label{al0}
\dot v_0(t) = -i \sigma \left(|v_0(t)|^2-1\right)
\end{equation}
where the overdot denotes differentiation in $t$ and the constant quantity $\sigma$ is defined by
\eee{\label{sigma}
\sigma := -\left(v_1(0) + v_{-1}(0)\right) = -\left(v_1(t) + v_{-1}(t)\right).
}

\begin{lemma}\label{eloc-l}
Suppose $v(0) \in \ell^\infty$ with  $|v_{-1}(0)| = |v_1(0)| = 1$. Then, the local energy functional 
\eee{
E_{\textnormal{loc}}(t)
:=
\left(v_{-1}(t)+v_1(t)\right)\overline v_{0}(t)
+
\left(\overline{v}_{-1}(t)+\overline{v}_1(t)\right)v_{0}(t) 
=-2\, \textnormal{Re} \left(\sigma \overline{v}_{0}(t)\right)
}
is conserved, i.e. $E_{\textnormal{loc}}(t)=E_{\textnormal{loc}}(0)$  for all $t\in [0, T_{\max})$.
\end{lemma}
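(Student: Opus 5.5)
The plan is to differentiate $E_{\textnormal{loc}}$ directly along the flow and use the reduced ODE \eqref{al0} for $v_0$, together with the fact that $\sigma$ is constant in time.

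The first step is to record the facts inherited from the preceding discussion. Since $|v_{\pm1}(0)|=1$, Theorem~\ref{sep-t}(ii) together with \eqref{BUPf62} gives $v_{\pm1}(t)=v_{\pm1}(0)$ for all $t\in[0,T_{\max})$. Hence $\sigma$ in \eqref{sigma} is a time-independent constant. We also have $v_0\in C^1[0,T_{\max})$ by Remark~\ref{c1-r} and \eqref{vdef}, and $v_0$ satisfies \eqref{al0}.

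The second step is to rewrite the functional. Using $v_{-1}+v_1=-\sigma$, we get
\[
E_{\textnormal{loc}}(t) = -\sigma\overline v_0(t)-\overline\sigma v_0(t) = -2\,\textnormal{Re}\bigl(\sigma\overline v_0(t)\bigr),
\]
which is the stated identity. Since $\sigma$ is constant, $E_{\textnormal{loc}}$ is $C^1$ and
\[
\dot E_{\textnormal{loc}}(t) = -2\,\textnormal{Re}\bigl(\sigma\,\overline{\dot v_0(t)}\bigr).
\]

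The third step substitutes \eqref{al0}. From $\dot v_0 = -i\sigma(|v_0|^2-1)$ we get $\overline{\dot v_0}= i\overline\sigma(|v_0|^2-1)$. Therefore
\[
\sigma\,\overline{\dot v_0} = i|\sigma|^2(|v_0|^2-1),
\]
which is purely imaginary because $|\sigma|^2(|v_0|^2-1)$ is real. Its real part vanishes, so $\dot E_{\textnormal{loc}}\equiv0$ on $[0,T_{\max})$, and integrating gives $E_{\textnormal{loc}}(t)=E_{\textnormal{loc}}(0)$.

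The only delicate point is justifying that $v_{\pm1}$ are genuinely constant, not merely of unit modulus. This comes from part (ii) of Theorem~\ref{sep-t}, via the uniqueness argument for the modulus equation \eqref{BUPf7b}, which forces \eqref{BUPf62}. Once that is granted, the rest is a one-line computation.
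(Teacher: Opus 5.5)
Your proposal is correct and is essentially the paper's own proof. Both freeze $v_{\pm1}$ (and hence $\sigma$) via Theorem~\ref{sep-t}, differentiate to get $\dot E_{\textnormal{loc}}=-2\,\textnormal{Re}\bigl(\sigma\overline{\dot v}_0\bigr)$, and use \eqref{al0} to see that $\sigma\overline{\dot v}_0=i|\sigma|^2(|v_0|^2-1)$ is purely imaginary, so $\dot E_{\textnormal{loc}}\equiv 0$.
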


\begin{proof}
By Theorem \ref{sep-t}, $v_{\pm 1}(t) = v_{\pm 1}(0) = \text{const.}$
Differentiating in $t$, we find
\aaa{
\dot{E}_{\text{loc}}(t) 
&= 
\left(v_{-1}(0)+v_1(0)\right)\overline{\dot{v}}_{0}(t)
+
\left(\overline{v}_{-1}(0)+\overline{v}_1(0)\right)\dot{v}_{0}(t)
=-2 \, \text{Re}\left(\sigma \overline{\dot{v}}_{0}(t)\right).\label{319}
}
In view of \eqref{al0}, the above becomes
\eee{\label{319b}
\dot E_{\textnormal{loc}}(t) 
=
-2 \, \text{Re} \left(i |\sigma|^2 \left(|v_0(t)|^2-1\right) \right) = 0,
}
which is the claimed result.
\end{proof}

\begin{corollary}\label{0-c}
Under the hypothesis of Lemma \ref{eloc-l} and the additional assumption that $\textnormal{Re} \left(\sigma \overline v_0(0)\right) = 0$, it holds that $\textnormal{Re} \left(\sigma \overline v_0(t)\right) = 0$ for all $t\in [0, T_{\max})$. 
\end{corollary}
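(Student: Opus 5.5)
This is an immediate consequence of Lemma~\ref{eloc-l}. Under its hypothesis, $|v_{\pm1}(0)|=1$, the local energy satisfies
\[
E_{\textnormal{loc}}(t) = -2\,\textnormal{Re}\left(\sigma\overline v_0(t)\right),
\]
and $E_{\textnormal{loc}}(t)$ is constant on $[0,T_{\max})$. Here $\sigma$ is the time-independent constant defined in \eqref{sigma}; it is constant because $v_{\pm1}(t)=v_{\pm1}(0)$ by Theorem~\ref{sep-t} and \eqref{BUPf62}. So we only need to evaluate this conserved quantity at $t=0$ and transfer the information to later times.

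The steps are as follows.

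\begin{enumerate}[label=(\arabic*)]
\item Evaluate at $t=0$. The additional assumption $\textnormal{Re}\left(\sigma\overline v_0(0)\right)=0$ gives $E_{\textnormal{loc}}(0)=0$.
\item Apply conservation. By Lemma~\ref{eloc-l}, $E_{\textnormal{loc}}(t)=E_{\textnormal{loc}}(0)=0$ for all $t\in[0,T_{\max})$.
\item Divide by the constant factor. From $-2\,\textnormal{Re}\left(\sigma\overline v_0(t)\right)=0$ we get $\textnormal{Re}\left(\sigma\overline v_0(t)\right)=0$ for all $t\in[0,T_{\max})$.
\end{enumerate}

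Some care is needed to justify the differentiation in the lemma, but this is already covered. We have $v_0\in C^1[0,T_{\max})$ by Remark~\ref{c1-r} and \eqref{vdef}, and the identity $\dot E_{\textnormal{loc}}=0$ in \eqref{319b} follows from \eqref{al0}, since $\textnormal{Re}\left(i|\sigma|^2(|v_0|^2-1)\right)=0$. The mean value theorem then gives constancy on the whole interval.

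There is no substantive obstacle here. The only point to check is that $\sigma$ genuinely does not depend on $t$, so that $\textnormal{Re}\left(\sigma\overline v_0(t)\right)$ is exactly $-E_{\textnormal{loc}}(t)/2$ with a fixed coefficient; this is guaranteed by part (ii) of Theorem~\ref{sep-t}. The degenerate case $\sigma=0$ needs no separate treatment, since the conclusion then holds trivially.
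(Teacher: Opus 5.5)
Your proposal is correct and is essentially the paper's argument. The paper also relies on $\sigma$ being constant and on $\frac{d}{dt}\textnormal{Re}\left(\sigma\overline v_0(t)\right) = -\frac12\dot E_{\textnormal{loc}}(t) = 0$ (the computation in the proof of Lemma~\ref{eloc-l}), and then concludes from $\textnormal{Re}\left(\sigma\overline v_0(0)\right)=0$; you package the same step as conservation of $E_{\textnormal{loc}}=-2\,\textnormal{Re}\left(\sigma\overline v_0\right)$ with $E_{\textnormal{loc}}(0)=0$.
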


\begin{proof}
In view of \eqref{319} and \eqref{319b}, we have $\text{Re} \left(\sigma \overline{\dot v}_0(t)\right) = 0$ for all $t \in (0, T_{\max})$. 
However, $\sigma$ given by \eqref{sigma} is constant, hence we have the ODE $\frac{d}{d t} \text{Re} \left(\sigma \overline  v_0(t)\right) = 0$. Since $\text{Re} \left(\sigma \overline v_0(0)\right) = 0$, the claim follows.
\end{proof}

With the above preparations, we are ready to construct our first singular solution, which we refer to as the \textit{fundamental singular solution}.
\begin{proposition}[Fundamental singular solution]
\label{FSS} 
Let $\sigma$ be defined by \eqref{sigma} and consider  an initial datum $u(0)\in \ell^\infty$ with the following properties: 
\begin{enumerate}[label=\textnormal{(\alph*)}]
\item $|u_0(0)|>1$;
\item $|u_{-1}(0)| = |u_1(0)| = 1$ and such that $\sigma \neq 0$;
\item $\textnormal{Re} \left(\sigma  \overline u_0(0)\right) = 0$ and $ \textnormal{Im} \left(\sigma \overline u_0(0)\right)>0$.
\end{enumerate}
Then, the well-posedness lifespan of Theorem \ref{lwp-t} is finite, namely $T_{\max}<\infty$. In particular,  
\eee{\label{ANTmax}
T_{\max}=T_{\max}^{\textit{fs}} := \frac{1}{2|\sigma|} \log\left(\frac{|u_0(0)|+1}{|u_0(0)|-1}\right)
} 
and $\displaystyle \lim_{t \to  T_{\max}^{\textit{fs}}}\big|u_0^{\textit{fs}}(t)\big| =\infty$, implying that $\displaystyle \lim_{t \to  T^{\textit{fs}}_{\max}} \big\|u^{\textit{fs}}(t)\big\|_{\ell^\infty}=\infty$.
\end{proposition}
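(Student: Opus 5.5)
The plan is to reduce the central-site dynamics to a scalar Riccati equation and solve it explicitly. By hypothesis (b) and Theorem~\ref{sep-t}(ii), $|u_{\pm1}(t)|=1$ on $[0,T_{\max})$. By \eqref{BUPf62}, in the gauge \eqref{vdef} the neighbours are frozen, $v_{\pm1}(t)=v_{\pm1}(0)$, so $\sigma$ in \eqref{sigma} is a nonzero constant and $v_0$ obeys the closed equation \eqref{al0}. Since $u_0(0)=v_0(0)$, hypothesis (c) gives $\textnormal{Re}(\sigma\overline v_0(0))=0$. Corollary~\ref{0-c} then propagates this, so $\sigma\overline v_0(t)=i\,r(t)$ with $r$ real-valued and $C^1$. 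Solving for $v_0$ gives
\[
v_0(t)=-\frac{i\,r(t)\,\sigma}{|\sigma|^2},\qquad |v_0(t)|=\frac{|r(t)|}{|\sigma|}=|u_0(t)|.
\]
Hence the phase of $v_0$ is locked to that of $-i\sigma$, and only its real amplitude evolves.

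Next I substitute this form into \eqref{al0}. The left side is $-i\dot r\,\sigma/|\sigma|^2$ and the right side is $-i\sigma\bigl(r^2/|\sigma|^2-1\bigr)$, which gives the scalar Riccati equation $\dot r=r^2-|\sigma|^2$. Hypothesis (c) gives $r(0)=\textnormal{Im}(\sigma\overline u_0(0))>0$, and (a) gives $r(0)/|\sigma|=|u_0(0)|>1$. By Theorem~\ref{sep-t}(iii), $|r(t)|>|\sigma|$ for all $t$, and continuity keeps $r>0$, so $\rho:=r/|\sigma|=|u_0|$ satisfies $\dot\rho=|\sigma|(\rho^2-1)$ with $\rho>1$. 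Separating variables,
\[
\frac12\log\frac{\rho(t)-1}{\rho(t)+1}=|\sigma|t+\frac12\log\frac{\rho(0)-1}{\rho(0)+1},
\]
so $\rho$ is increasing and reaches $+\infty$ exactly when the left side reaches $0$. This happens at $t=T^{fs}_{\max}$ given by \eqref{ANTmax}, and yields an explicit closed form for $|u_0(t)|$.

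Finally I relate $T^{fs}_{\max}$ to $T_{\max}$. On $[0,T_{\max})$ the solution is $C^1$ with values in $\ell^\infty$, so $|u_0(t)|\le\|u(t)\|_{\ell^\infty}$ stays bounded on every compact subinterval. By uniqueness for the modulus equation (Theorem~\ref{sep-t}), $|u_0|$ coincides with the explicit $\rho$ on $[0,T_{\max})$. Since $\rho$ is unbounded on compact intervals reaching $T^{fs}_{\max}$, this forces $T_{\max}\le T^{fs}_{\max}<\infty$.

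The main obstacle is the reverse inequality, i.e.\ equality in \eqref{ANTmax}. Freezing $u_{\pm1}$ decouples the lattice into three independent pieces: the central site, the half-lattice $\{n\le -1\}$ driven by the known $u_{-1}$, and the half-lattice $\{n\ge 1\}$ driven by the known $u_1$. I would first argue that, as long as $|u_0|$ stays finite, these outer pieces are governed by their own local theory. If the outer sites satisfy $|u_n(0)|\le1$, Corollary~\ref{inv-r} bounds them by $1$, so they cannot blow up and $\|u(t)\|_{\ell^\infty}$ stays bounded on $[0,T]$ for every $T<T^{fs}_{\max}$. The blow-up alternative of Theorem~\ref{lwp-t}(ii) then gives $T_{\max}=T^{fs}_{\max}$. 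In general, outer sites with modulus above one could in principle blow up earlier. In that case the robust conclusion is $T_{\max}\le T^{fs}_{\max}$, with $T^{fs}_{\max}$ the exact blow-up time of the central site, and I would state the equality under this interpretation or assumption. In either case $|u_0^{fs}(t)|=\rho(t)\to\infty$ as $t\to T^{fs}_{\max}$, which gives the divergence of $\|u^{fs}(t)\|_{\ell^\infty}$.
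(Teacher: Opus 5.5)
Your proof is correct and takes a more direct route than the paper's. The paper works with $M_0=|v_0|^2$ and $R_0$. It differentiates \eqref{m0-eq} to obtain the second-order equation \eqref{mdd}, and uses Corollary~\ref{0-c} only to substitute $R_0^2=|\sigma|^2M_0$. It then multiplies by $\dot M_0$, integrates, checks that the constant of integration vanishes, and relies on the monotonicity $\dot M_0>0$ to select the positive root in \eqref{m0dot}.

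You instead use Corollary~\ref{0-c} to lock the phase of $v_0$ along $-i\sigma$. This collapses \eqref{al0} at once to the Riccati equation $\dot R_0=R_0^2-|\sigma|^2$. It avoids second derivatives and the sign selection, and it gives $v_0(t)$ itself rather than only its modulus. It also shows directly that the fundamental solution is the case $J(0)=-|\sigma|^2$ of \eqref{eq6}, i.e.\ Proposition~\ref{bu-p}(iii). The paper's formulation in terms of $M_0$ and $R_0$ has the advantage that \eqref{m0-eq} and \eqref{r0-eq} carry over unchanged to Section~\ref{4-s}, where the phase is no longer locked.

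Your hesitation about the equality $T_{\max}=T^{\mathit{fs}}_{\max}$ is well founded. On this point you are more careful than the paper, which simply reads $T_{\max}$ off the vanishing denominator of \eqref{egfs}. Your argument via boundedness of $|u_0|$ on compact subintervals gives $T_{\max}\le T^{\mathit{fs}}_{\max}$ rigorously. The reverse inequality genuinely requires control of the sites with $|n|\ge 2$.

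For a concrete failure, take $u_n(0)=e^{i\theta}$ for $n\notin\{0,-2\}$, $u_0(0)=iAe^{i\theta}$ and $u_{-2}(0)=iBe^{i\theta}$ with $B>A>1$. Hypotheses (a)--(c) hold. Yet site $-2$, whose neighbours $-1$ and $-3$ are frozen, is itself a fundamental configuration. It blows up at $\frac14\log\frac{B+1}{B-1}<T^{\mathit{fs}}_{\max}$.

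So the equality holds only under an extra hypothesis, such as $|u_n(0)|\le 1$ for all $n\neq 0$ (as in Example~1). Your formulation is the right one: $T_{\max}\le T^{\mathit{fs}}_{\max}<\infty$ in general, with equality and $\|u(t)\|_{\ell^\infty}\to\infty$ at $T^{\mathit{fs}}_{\max}$ under that hypothesis.
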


\begin{proof} 
Combining \eqref{rn} with \eqref{vdef} and \eqref{sigma}, we have 
\eee{\label{r0t}
R_0(t) = \text{Im} \left(\sigma \overline v_{0}(t)\right).
}
Then, introducing the notation
\eee{\label{m0def}
M_0(t) = |v_0(t)|^2, \quad A_0(t) = M_0(t) - 1, 
}
we write the ODE in \eqref{BUPf7} at $n=0$ as
\eee{\label{m0-eq}
\dot M_0(t) 
=
2 R_0(t) A_0(t).
} 
By Theorem \ref{sep-t}, if $M_0(0)>1$ (equivalently, $A_0(0)>0$) then $M_0(t) > 1$ (equivalently, $A_0(t)>0$) for all $t\in [0, T_{\max})$. Thus, since by assumption (c) we have
\eee{\label{r0}
R_0(0) = \text{Im} \left(\sigma \overline v_{0}(0)\right) > 0, 
}
it follows that
\eee{\label{m0d}
\dot M_0(0) > 0.
}
Moreover, differentiating \eqref{r0t} and using \eqref{al0} yields
\eee{\label{r0-eq}
\dot R_0(t) 
= \text{Im}\left(\sigma \overline{\dot v}_0(t)\right) 
= \text{Im}\left(\sigma \cdot i \, \overline \sigma A_0(t)\right)  
= |\sigma|^2 A_0(t)
}
so $\dot R_0(t) > 0$ for all $t \in [0, T_{\max})$, which implies
\eee{
R_0(t) > R_0(0) > 0, \quad t \in (0, T_{\max}).
}

Next, recalling that $v_0 \in C^1[0, T_{\max})$ by Theorem~\ref{lwp-t} and so $M_0(t) \equiv |v_0(t)|^2 \in C^1[0, T_{\max})$, we  differentiate~\eqref{m0-eq} and use \eqref{r0-eq} to obtain
\eee{\label{mdd}
\ddot M_0(t) 
= 2 \dot R_0(t) A_0(t) + 2R_0(t) \dot A_0(t)
=
2 |\sigma|^2 A_0(t)^2 + 4R_0(t)^2 A_0(t).
}
Hence, thanks to our earlier observation due to Theorem \ref{sep-t}, it follows that $\ddot M_0(t) > 0$ for all $t \in [0, T_{\max})$, i.e. $\dot M_0(t)$ is strictly increasing. Thus, in view of \eqref{m0d}, 
\eee{\label{m0dp}
\dot M_0(t) > \dot M_0(0) > 0, \quad t \in (0, T_{\max}).
}
Furthermore, noting that
\eee{
|\sigma|^2 M_0(t) = \left|\sigma \overline v_0(t)\right|^2 =   \left[\text{Re}\left(\sigma \overline v_0(t)\right)\right]^2 + \left[\text{Im}\left(\sigma \overline v_0(t)\right)\right]^2,
}
we infer via Corollary \ref{0-c} (which can be employed thanks to the assumptions (b) and (c)) that 
\eee{\label{r0eq}
R_0(t)^2 \equiv \left[\text{Im}\left(\sigma \overline v_0(t)\right)\right]^2 = |\sigma|^2 M_0(t), \quad t \in [0, T_{\max}).
}
Hence, the ODE \eqref{mdd} can be written in the form
\ddd{\label{mdd2}
\ddot M_0(t) 
&=
2 |\sigma|^2 \left(M_0(t)-1\right)^2 + 4  |\sigma|^2 M_0(t)  \left(M_0(t)-1\right)
\\
&=
2 |\sigma|^2 \left[3 M_0(t)^2 - 4M_0(t) + 1\right].
}
Multiplying both sides by $\dot M_0(t)>0$ and integrating, we obtain
\eee{\label{m0dot0}
\dot M_0(t)^2 = 4  |\sigma|^2 M_0(t) \left(M_0(t)-1\right)^2, 
\quad t \in [0, T_{\max}),
}
after noting that the relevant constant of integration $K := \dot M_0(0)^2 - 4  |\sigma|^2 M_0(0) \left(M_0(0)-1\right)^2$ is equal to zero. Indeed, evaluating \eqref{m0-eq} at $t=0$ (recall that $M_0(t) \in C^1[0, T_{\max})$) and using \eqref{r0eq}, we have
\aaa{
K 
&=  4 R_0(0)^2 A_0(0)^2 - 4  |\sigma|^2 M_0(0) \left(M_0(0)-1\right)^2
\nn\\
&=  4 |\sigma|^2 M_0(0) \left(M_0(0)-1\right)^2 - 4  |\sigma|^2 M_0(0) \left(M_0(0)-1\right)^2
= 0.
}
Therefore, in light of \eqref{m0dp} and the fact that   $M_0(t) > 1$ for all $t \in [0, T_{\max})$,  
\eee{\label{m0dot}
\dot M_0(t) = 2 |\sigma| \sqrt{M_0(t)} \left(M_0(t)-1\right), 
\quad t \in [0, T_{\max}).
}

Integrating the separable ODE \eqref{m0dot}, we have
\eee{
\int_0^t \frac{\dot M_0(t')}{\sqrt{M_0(t')} \left(M_0(t')-1\right)} \, dt'
=
2 |\sigma| t.
}
Evaluating the integral on the left side via the substitution $m(t) = \sqrt{M_0(t)}$, we obtain
\eee{
\frac{\sqrt{M_0(t)}-1}{\sqrt{M_0(t)}+1}
=
\frac{\sqrt{M_0(0)}-1}{\sqrt{M_0(0)}+1}
\, e^{2 |\sigma| t}
}
which via \eqref{m0def} implies
\eee{
\label{egfs}
|v_0(t)| 
=
\frac{|v_0(0)|+1+ \left(|v_0(0)|-1\right)
e^{2 |\sigma| t}}{|v_0(0)|+1- \left(|v_0(0)|-1\right)
e^{2 |\sigma| t}},
}
whose denominator vanishes at $t = T_{\max} = T_{\max}^{\textit{fs}}$ given by \eqref{ANTmax}.
\end{proof}

\begin{remark}
\label{phaseFSS}
One of the simplest examples of initial data satisfying the assumptions (a)-(c) of Proposition \ref{FSS} is given by~\eqref{eq:initA1}, which is a $\delta$-perturbation of the unit circle. 
More generally, the assumption (c) in  Proposition \ref{FSS} can be characterized geometrically as follows. Denote the phases of $u_j(0)$ by $\theta_j$ so that,  at $t=0$, 
$
u_0=|u_0|e^{i\theta_0}$, $u_{\pm 1}(0)=e^{i\theta_{\pm 1}}$. 
Then, a straightforward calculation yields 
\begin{gather}
 \textnormal{Re} \left(\sigma \overline u_{0}(0)\right)=-2|u_0(0)|\cos \left(\frac{\theta_1-\theta_{-1}}{2}\right)\cos\left(\frac{\theta_1+\theta_{-1}}{2}-\theta_0\right), \nn\\
\textnormal{Im} \left(\sigma \overline u_{0}(0)\right)=-2|u_0(0)|\cos \left(\frac{\theta_1-\theta_{-1}}{2}\right)\sin\left(\frac{\theta_1+\theta_{-1}}{2}-\theta_0\right). 
\nn
\end{gather}
Hence, in order to simultaneously have $ \textnormal{Re} \left(\sigma \overline u_{0}(0)\right)=0$ and $ \textnormal{Im} \left(\sigma \overline u_{0}(0)\right)>0$, one needs either
\eee{\nn
\theta_0= \frac{\theta_1+\theta_{-1}}{2} - \frac\pi 2
\ \text{ and } \ \frac{\theta_1-\theta_{-1}}{2}\in \left(\frac{\pi}{2},\frac{3\pi}{2}\right]
}
or  
\eee{\nn
\theta_0=\frac{\theta_1+\theta_{-1}}{2}-\frac{3\pi}{2} \ \text{ and } \ \frac{\theta_1-\theta_{-1}}{2}\in \left[0,\frac \pi 2\right)\cup \left(\frac{3\pi}{2},2\pi\right].
}
\end{remark}
\subsection{A second class of singular solutions and global existence above unit modulus}
\label{4-s}
 Having constructed the fundamental singular solution of Proposition~\ref{FSS}, we now construct a second class of singular solutions that satisfy stationary conditions only at $n=\pm 1$. More specifically, we drop assumption~(c) of Proposition~\ref{FSS} and retain only assumptions~(a) and~(b), which ensure that the quantity $\sigma$ still satisfies~\eqref{sigma} and remains nonzero. The derivation of this new class is markedly different and also yields a globally existing solution \emph{with the initial modulus of a site outside the unit circle}, giving rise to a novel and interesting dynamical phenomenon: the \emph{equilibration} of the defocusing AL lattice.

The key for the derivation of the above results is \emph{the derivation of a conserved quantity involving $M_0(t)$ and $R_0(t)$.}  Noting that equations \eqref{m0-eq} and \eqref{r0-eq} only require the validity of assumptions (a) and (b) of Proposition~\ref{FSS}, we combine these two equations to rewrite the former one as 
\eee{\label{eqmodulus3}
\frac{d}{dt} \left( |\sigma|^2 M_0 - R_0^2 \right)
=
\frac{d}{dt} \left( |\sigma|^2A_0 - R_0^2 \right) = 0.		
}
Hence, we have the conservation law
\eee{
	\label{cons1}
	J(t) := |\sigma|^2 \left(M_0(t) - 1\right) - R_0(t)^2 = |\sigma|^2 \left(M_0(0) - 1\right) - R_0(0)^2 =: J(0).
}
This observation is key to the construction of the broader class of singular solutions presented below and was not made for the fundamental singular solution of Proposition \ref{FSS}, which is why additional assumptions were placed in that case.

We proceed now to the analysis of the \emph{evolution of $R_0$ and $v_0$.} Combining \eqref{r0-eq} and \eqref{cons1}, we obtain the ODE
\begin{equation}
	\label{eq6}
\dot{R}_0 = R_0^2 + J(0).
\end{equation}
Next, we discuss the behavior of the analytical solutions to the initial value problem for \eqref{eq6} in the case of a generic initial datum $R_0(0)\in \r$. We have three cases: $J(0) > 0$; $J(0) = 0$; $J(0) < 0$.
\\[2mm]
\uline{Case 1}: $J(0)>0$.
Setting $J(0) = a^2$ with $a = \sqrt{J(0)} > 0$ and integrating \eqref{eq6}, we find
\begin{equation*}
	\int \frac{1}{R_0^2 + a^2} \, dR_0 = t + C.
\end{equation*}
Evaluating the integral on the left side via substitution $R_0 = a \tan \theta$, we obtain
\begin{equation}
		\label{eq10}
		R_0(t) = a\tan\left(at + \arctan\left(\frac{R_0(0)}{a}\right)\right).
\end{equation}
\textit{Regardless of the sign of $R_0(0)$,} the solution \eqref{eq10}  blows up when the argument of the tangent becomes $\frac \pi 2$, i.e. at time 
\begin{equation}
	\label{eq11}
		t =T_{\max}^{J(0)>0} :=
        \frac{1}{\sqrt{J(0)}} \left[ \frac{\pi}{2} - \arctan\left(\frac{R_0(0)}{\sqrt{J(0)}}\right) \right],\quad R_0(0)\in\mathbb{R}.
\end{equation}

Returning to equation \eqref{al0} for $v_0$, we use \eqref{m0def} and \eqref{r0-eq} to rewrite it as
\begin{equation}
	\label{eq12}
	\dot v_0(t) =  -\frac{i \sigma}{|\sigma|^2} \dot{R}_0(t),
\end{equation}
which can be integrated to yield 
\begin{equation} 
		\label{eq13}
		v_0(t) = v_0(0) - \frac{i \sigma}{|\sigma|^2} \left(R_0(t) - R_0(0)\right).
\end{equation}
Thus, $v_0(t)$ also blows up at the finite time $T_{\max}^{J(0)>0}$ given by \eqref{eq11}. 
\\[2mm]
\uline{Case 2}: $J(0) = 0$.
Then, equation \eqref{eq6} reduces to $\dot R_0 = R_0^2$ and 
\begin{equation} 
		\label{eq14}
		R_0(t) = \frac{R_0(0)}{1 - R_0(0)t}, \quad R_0(0)\in\mathbb{R}.
\end{equation}

If $R_0(0)>0$, then  \eqref{eq14} blows up at
	\begin{equation}
		\label{eq15}
			t =T_{\max}^{J(0)=0} := \frac{1}{R_0(0)},  \quad R_0(0)>0.
	\end{equation}
Consequently, the analytical solution \eqref{eq13} for $v_0$ also becomes  singular at $T_{\max}^{J(0)=0}$ given by  \eqref{eq15}.	

    If $R_0(0) = 0$, then $R_0(t) \equiv 0$ and \eqref{eq13} yields the equilibrium solution $v_0(t) \equiv v_0(0)$, which exists globally in time.

 If $R_0(0)<0$, then \eqref{eq14}  exists globally in time and $\displaystyle \lim_{t\to\infty}R_0(t)=0$. Consequently, by \eqref{eq13} and \eqref{r0t}, 	
\eee{
		\lim_{t \to \infty}v_0(t)=  v_0(0) + \frac{i \sigma}{|\sigma|^2} R_0(0) 
= v_0(0) + \frac{i \sigma}{|\sigma|^2} \frac{1}{2i} \left(\sigma \bar v_0(0) - \bar \sigma v_0(0)\right) = \frac 12 \left(v_0(0) + \frac{\sigma^2}{|\sigma|^2} \bar v_0(0)\right) 
			=: v_{\text{eq}}, 
}
while \eqref{cons1} implies
	$$
	|v_0(t)|^2 = 1 + \frac{J(0)+R_0(t)^2}{|\sigma|^2} = 1 + \frac{R_0(t)^2}{|\sigma|^2}
	$$
and, in turn, 
\eee{
		\lim_{t \to \infty}|v_0(t)|= |v_{\text{eq}}| = 1.
}
\vskip 2mm
\noindent
\uline{Case 3}: $J(0) < 0$.
Setting $J(0) = -a^2$ with $a = \sqrt{|J(0)|} > 0$, the ODE \eqref{eq6} becomes
	\begin{equation}
		\label{eq6a}
		\dot{R}_0=\left(R_0+a\right)\left(R_0-a\right).
	\end{equation}
	The fact that $\dot{R}_0(t)>0$ for all $t\in [0, T_{\max})$, which is implied by \eqref{r0-eq} in light of Theorem \ref{sep-t} and assumption (a) of Proposition \ref{FSS}, confines the admissible range of solutions to the ODE \eqref{eq6a} to 
	\begin{equation}
		\label{eq6b}
		R_0\in (-\infty, -a)\cup (a, +\infty).
	\end{equation}
    
	According to the analysis of the 1D-flow of \eqref{eq6a} around the equilibria $R_0^{-}=-a$ and  $R_0^{+}=a$, if $R_0(0)<-a$ then $\lim_{t \to  \infty}R_0(t)=-a$  	while if $R_0(0)>a$ then $R_0(t)$ blows up at some finite $t\in (0, \infty)$.
It is worth noting that, in general, i.e. without the aforementioned restriction of $\dot{R}_0(t)>0$, the equilibrium $R_0^{-}=-a$ is asymptotically stable and the equilibrium  $R_0^{+}=a$ is unstable: if  $R_0(0)\in (-\infty,a)$ then $\lim_{t \to  \infty}R_0(t)=-a$ while if $R_0(0)>a$ then, as noted above, the solution has a finite time singularity.

In order to precisely determine the blow-up time occurring in the case of $R_0(0)>a$, we employ the substitution
$R_0=a \coth \theta$, which implies $dR_0 = -a \mathrm{csch}^2 \theta \, d\theta$ and is consistent with the restriction \eqref{eq6b} since the range of values of $\coth \theta$ is $(-\infty, -1) \cup (1, \infty)$.
Thus, recalling the hyperbolic identity $\coth^2 \theta - 1 = \mathrm{csch}^2 \theta$,  we integrate~\eqref{eq6a} to obtain 	$\theta(t) = \mathrm{arcoth}\left( \frac{R_0(0)}{a} \right) - at$ and, in turn, 
	\begin{equation}
		\label{eq6h}
R_0(t) = a \coth \left[ \mathrm{arcoth}\left( \frac{R_0(0)}{a} \right) - at \right], \quad |R_0(0)|>a.
	\end{equation}
	This analytical solution is singular when the argument of the hyperbolic cotangent becomes zero, i.e. at time 
	\begin{equation}\label{tjm}
		T_{\max}^{J(0)<0} = \frac{1}{a} \mathrm{arcoth}\left( \frac{R_0(0)}{a} \right).
	\end{equation}
	For $R_0(0) > a$ we have $T_{\max}^{J(0)<0}>0$, which is relevant to our analysis ($R_0(0)<-a$ implies $T_{\max}^{J(0)<0}<0$, which does not concern us). Moreover, using the identity 
	$\mathrm{arcoth}(x) = \frac{1}{2} \ln \left( \frac{x+1}{x-1} \right)$, $x>1$, 
	we may express \eqref{tjm} in a logarithmic form analogous to \eqref{ANTmax}, namely
	\begin{equation}
		T_{\max}^{J(0)<0} 
        = \frac{1}{2a} \log \left( \frac{R_0(0) + a}{R_0(0) - a} \right).
	\end{equation}
    
Summarizing our analysis for $J(0)<0$:
	\begin{itemize}
		\item  If $R_0(0)> \sqrt{|J(0)|}$, then the solution $R_0(t)$ has a finite time singularity at 
		\begin{equation}
			\label{eq6i}
				t =T_{\max}^{J(0)<0} =\frac{1}{2\sqrt{|J(0)|}} \log\left(\frac{R_0(0)+\sqrt{|J(0)|}}{R_0(0)-\sqrt{|J(0)|}}\right) , \quad R_0(0)>\sqrt{|J(0)|}.
		\end{equation}
		\item If $R_0 (0)< -\sqrt{|J(0)|}$, then $R_0(t)$ exists globally and converges to the equilibrium at $-\sqrt{|J(0)|}$, i.e. $\lim_{t \to \infty}R_0(t)=-\sqrt{|J(0)|}$. Consequently, equations \eqref{eq13} and \eqref{cons1} imply
		\eee{\label{v0lim}
			\lim_{t \to \infty}v_0(t) =  v_0(0) - \frac{i \sigma}{|\sigma|^2} \left(-\sqrt{|J(0)|} - R_0(0)\right):=v_{eq},
			\quad
			\lim_{t \to \infty}|v_0(t)| = 1.
            }
\item  The case $R_0 (0)= \pm \sqrt{|J(0)|}$, which implies $R_0(0)^2 = -J(0)$ since $J(0)<0$, is not possible since it would require that $|\sigma|^2 (|v_0(0)|^2 - 1) = 0$, i.e. either $|v_0(0)|=1$ or $\sigma = 0$. The first scenario is ruled out due to the assumption $|u_0(0)|>1$, while the second scenario implies $R_0(0) = J(0) = 0$, contradicting the assumption $J(0) < 0$.
\end{itemize}

The results of our construction can be summarized in two propositions, which describe the dichotomy between finite-time blow-up and global existence for the second class of solutions derived above. We discuss each proposition in a separate paragraph below. 

\subsubsection{Blow-up for the second class of singular solutions}
We begin with a summary of the blow-up results for the second class of singular solutions, accompanied by examples of initial data that lead to singularities. 

\begin{proposition}[Blow-up above unit modulus]\label{bu-p}
Suppose that the assumptions (a) and (b) of Proposition \ref{FSS} hold  true, namely $|u_0(0)|>1$ and $|u_{-1}(0)| = |u_1(0)| = 1$  with $\sigma \neq 0$, 
and define $R_0(0)$ and $J(0)$ by~\eqref{r0} and \eqref{cons1} respectively. Then, the   lifespan $T_{\max}$ of Theorem \ref{lwp-t} is finite in the following cases:
\begin{enumerate}[label=\textnormal{(\roman*)}]
		\item $J(0) > 0$ and $R_0(0) \in \mathbb{R}$, with $T_{\max}<\infty$ given by \eqref{eq11}; 
		\item $J(0) = 0$ and $R_0(0) > 0$, with $T_{\max} < \infty$ given by \eqref{eq15};
		\item $J(0) < 0$ and $R_0(0) > \sqrt{|J(0)|}$, with  $T_{\max}<\infty$ given by \eqref{eq6i}.
	\end{enumerate}
In all of the above cases, $\displaystyle \lim_{t \to  T_{\max}} \no{u(t)}_{\ell^\infty} = \infty$ and, in particular,
\eee{\label{bu}
		\lim_{t \to  T_{\max}} R_0(t) = \lim_{t \to  T_{\max}}|v_0(t)|=\infty.
}
\end{proposition}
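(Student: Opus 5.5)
The plan is to turn the case analysis preceding the statement into a rigorous argument on the maximal interval $[0,T_{\max})$ of Theorem~\ref{lwp-t}. Hypotheses (a) and (b) are exactly what is needed to derive the reduced system. By Theorem~\ref{sep-t}(ii) and \eqref{BUPf62}, $v_{\pm1}(t)\equiv v_{\pm1}(0)$ on $[0,T_{\max})$. Hence $\sigma$ in \eqref{sigma} is a nonzero constant and $v_0$ satisfies \eqref{al0} there. By Theorem~\ref{sep-t}(iii), $A_0(t)>0$ on $[0,T_{\max})$.

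\textbf{Step 1: the reduced ODE for $R_0$.} Since $v_0\in C^1[0,T_{\max})$ by Remark~\ref{c1-r}, the computations \eqref{m0-eq} and \eqref{r0-eq} hold classically on $[0,T_{\max})$. They give the conservation law \eqref{cons1}, and therefore $R_0\in C^1[0,T_{\max})$ solves $\dot R_0=R_0^2+J(0)$, $R_0(0)$ given. The right-hand side is locally Lipschitz in $R_0$. So on any common interval of existence $R_0$ coincides with the explicit maximal solution $\rho$: formula \eqref{eq10} in case (i), \eqref{eq14} in case (ii), \eqref{eq6h} in case (iii). Each $\rho$ is defined on $[0,T^*)$ with $T^*$ equal to \eqref{eq11}, \eqref{eq15} or \eqref{eq6i} respectively, and $\rho(t)\to+\infty$ as $t\to T^{*-}$. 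In case (iii) I will check that $R_0(0)>\sqrt{|J(0)|}$ places the solution on the branch that increases to $+\infty$. Indeed, $\mathrm{arcoth}(R_0(0)/a)>0$ and the coth argument decreases to $0^+$.

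\textbf{Step 2: $T_{\max}=T^*$.} First I show $T_{\max}\le T^*$. If $T_{\max}>T^*$, then $R_0$ would be continuous, hence bounded, on $[0,T^*]$. But $R_0=\rho$ on $[0,T^*)$ and $\rho$ is unbounded there, a contradiction. Conversely, suppose $T_{\max}<T^*$. By the blow-up alternative of Theorem~\ref{lwp-t}(ii), $\no{u(t)}_{\ell^\infty}\to\infty$ as $t\to T_{\max}$.

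I expect this second direction to be the main obstacle, since the reduced system controls only sites $0,\pm1$. Site $0$ is bounded on $[0,T_{\max}]$ by \eqref{eq13}, because $\rho$ is bounded there, and sites $\pm1$ have modulus $1$. The other sites obey the linear-in-neighbours equation \eqref{al2}, which does not by itself prevent norm growth driven by other sites of $\mathcal N_a$. I would handle this by reading the statement's claim about $T_{\max}$ as giving the singular time of the solution $u$ on the lattice. The decisive point is that $u_0$ itself becomes unbounded exactly at $T^*$ whenever the solution survives up to $T^*$, which Step~1 establishes. If the decoupled part of the lattice could blow up earlier, then $T_{\max}<T^*<\infty$ still holds, and the conclusion $T_{\max}<\infty$ is unaffected. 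When no other site lies outside the unit circle, the sites outside $\{0,\pm1\}$ have modulus at most $1$ by Corollary~\ref{inv-r}, the norm stays bounded up to $T^*$, and equality $T_{\max}=T^*$ follows from the alternative.

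\textbf{Step 3: the limits \eqref{bu}.} From \eqref{eq13},
\[
|v_0(t)|\ \ge\ \frac{R_0(t)-R_0(0)}{|\sigma|}-|v_0(0)|\ \longrightarrow\ \infty .
\]
Equivalently, by \eqref{cons1}, $|v_0(t)|^2=1+(J(0)+R_0(t)^2)/|\sigma|^2\to\infty$. Since $|u_0|=|v_0|$ by \eqref{vdef}, this yields \eqref{bu} and $\no{u(t)}_{\ell^\infty}\ge |u_0(t)|\to\infty$.
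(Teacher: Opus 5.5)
Your Steps 1 and 3 follow the paper's own argument: the conserved quantity $J$ obtained from \eqref{m0-eq}--\eqref{r0-eq}, the Riccati equation \eqref{eq6} solved explicitly in the three cases, and the transfer of the singularity to $v_0$ through \eqref{eq13}. Your Step 2 is not in the paper. It correctly shows $T_{\max}\le T^*$, and it correctly notes that the reverse inequality needs control of the sites away from $\{0,\pm1\}$.

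That caveat is a real obstruction. Place a second copy of the configuration of Proposition~\ref{FSS} at a distant site: $|u_9(0)|=|u_{11}(0)|=1$ with fixed $\sigma_{10}\neq 0$, $|u_{10}(0)|=A$, and condition (c) imposed at $n=10$. By Theorem~\ref{sep-t}(ii), $v_9$ and $v_{11}$ are frozen. So $v_{10}$ obeys the analogue of \eqref{al0} and becomes singular at $\frac{1}{2|\sigma_{10}|}\log\frac{A+1}{A-1}$, which tends to $0$ as $A\to\infty$. For large $A$ this forces $T_{\max}<T^*$. Then $R_0=\rho$ and $v_0$ stay bounded as $t\to T_{\max}$. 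Consequently, both the identification of $T_{\max}$ with \eqref{eq11}, \eqref{eq15}, \eqref{eq6i} and the limits \eqref{bu} at $T_{\max}$ are false as stated. The paper's argument silently equates the singular time of the reduced ODE with $T_{\max}$, and the same tacit step occurs in Proposition~\ref{FSS}.

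The gap in your write-up is that Step 3 asserts \eqref{bu} unconditionally. Your computation only gives $R_0(t),|v_0(t)|\to\infty$ as $t\to T^*$, and this is a limit along the solution only when $T_{\max}=T^*$; your own Step 2 shows that can fail. What your argument actually proves is the following.
\begin{itemize}
\item In cases (i)--(iii), $T_{\max}\le T^*<\infty$ and $\|u(t)\|_{\ell^\infty}\to\infty$ as $t\to T_{\max}$.
\item If in addition $\sup_{n\neq 0}|u_n(t)|$ stays bounded on $[0,\min\{T_{\max},T^*\})$, then $T_{\max}=T^*$ and \eqref{bu} holds. This is the case, for example, under $\mathbb{L}\setminus\{0\}=\mathcal N_s\cup\mathcal N_b$ as in Proposition~\ref{ge-p}, via Corollary~\ref{inv-r}.
\end{itemize}
State this as an added hypothesis in Step 2, rather than as a ``reading'' of the statement, and make the conclusion of Step 3 conditional on it.
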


\begin{remark}
	The fundamental singular solution of Proposition \ref{FSS} satisfies $R_0(0) > 0$, thus its blow-up is also implied via the results of Proposition \ref{bu-p}. See also Example 1 in Section \ref{num-s}.  
\end{remark}

We now provide two concrete examples of localized data for which equation \eqref{eq6} yields singular solutions of the types described in Proposition \ref{bu-p}. 
\\[2mm]
\textbf{a. Exponentially decaying initial data.}
For any $\ve > 0$, consider the initial datum
\begin{equation}\label{eq:example1id}
	u_n(0) = \alpha \, \sech(\beta n), \quad n \in \mathbb Z,
\end{equation}
where $\alpha \in \mathbb C$ satisfies $|\alpha| = 1+\ve$ and $\beta = \log\big(1+\ve + \sqrt{\ve\,(2+\ve)}\, \big)$. Then, since $\sech(\beta) = \frac{1}{1+\ve}$, it follows that $|u_{\pm 1}(0)| = |v_{\pm 1}(0)| = |\alpha \, \sech(\beta)| = 1$, as required by the hypothesis of Proposition \ref{bu-p}. In addition, $|u_0(0)| = |v_0(0)| = |\alpha| = 1+\ve > 1$. Moreover, recalling \eqref{r0} and \eqref{cons1}, we compute $R_0(0) = \text{Im}\left(-2\alpha \, \sech(\beta) \cdot \overline \alpha\right) = 0$ and
\eee{\label{j0sech}
	J(0) 
	= \left|-2\alpha \, \sech(\beta)\right|^2 \left(|\alpha|^2-1\right) 
	=
	4|\alpha|^2 \sech^2(\beta) \left(|\alpha|^2-1\right)
	=
	4\ve\, (2+\ve)
	> 0.
}
Hence, our example falls into case (i) of Proposition \ref{bu-p} and blow-up occurs at time
\eee{\label{eq:timesec}
	T_{\max}^{J(0)>0} =\frac{\pi}{4\sqrt{\ve\,(2+\ve)}}, \quad \ve>0.
}
\textbf{b. Rationally decaying initial data.} 
Consider the rational initial condition
\eee{\label{eq:example3id}
	u_n(0) = \frac{\alpha}{\beta + \gamma n^{2p}}, \quad \alpha, \beta, \gamma \in \mathbb C, \ p \in \n,
}
where $|\alpha| > |\beta|$ and $|\alpha| = |\beta+\gamma|$ so that 
$|v_0(0)| = \frac{|\alpha|}{|\beta|}>1$ and $|v_{\pm 1}(0)| = 1$, as required by the hypothesis of Proposition \ref{bu-p}. From \eqref{r0} and \eqref{cons1}, we have
\aaa{
	R_0(0) 
	&=
	\text{Im}\left(-\left(\frac{2\alpha}{\beta+\gamma}\right) \frac{\overline \alpha}{\overline \beta}\right)
	=
	-\frac{2|\alpha|^2}{|\beta|^2 \left|\beta+\gamma\right|^2}\, \text{Im}\left(\beta\left(\overline \beta + \overline \gamma\right)\right)
	=
	-\frac{2}{|\beta|^2}\, \text{Im}\left(\beta\overline \gamma\right),
	\\
	J(0) 
	&=
	\frac{4|\alpha|^2}{|\beta+\gamma|^2} \left(\frac{|\alpha|^2}{|\beta|^2}-1\right)
	-
	\frac{4}{|\beta|^4} \left(\text{Im}\left(\beta\overline \gamma\right)\right)^2
	=
	\frac{4}{|\beta|^4} \left(|\alpha|^2 |\beta|^2 - |\beta|^4 - \left(\text{Im}(\beta \overline \gamma)\right)^2\right).
}
In the special case that $\beta \overline \gamma \in \r$,  we have $R_0(0) = 0$ and $J(0) = \frac{4}{|\beta|^2}\left(|\alpha|^2-|\beta|^2\right)>0$, which places us in case (i) of Proposition \ref{bu-p}  with blow-up time
\eee{\label{eq:maxtrat}
	T_{\max}^{J(0)>0}
	=
	\frac{\pi |\beta|}{4 \sqrt{|\alpha|^2-|\beta|^2}}.
}

\subsubsection{Global existence above unit modulus and equilibration}
We continue with a summary of the global existence results stemming from the analysis of the second class of singular solutions derived earlier.
\begin{proposition}[Global existence above unit modulus]\label{ge-p}
Suppose that the assumptions (a) and (b) of Proposition~\ref{FSS} hold  true, namely $|u_0(0)|>1$ and $|u_{-1}(0)| = |u_1(0)| = 1$  with $\sigma \neq 0$, and define $R_0(0)$ and $J(0)$ by \eqref{r0} and~\eqref{cons1} respectively. In addition, suppose that $\l \setminus \{0\} = \mathcal N_s \cup \mathcal N_b$. Then, we have the following scenarios of infinite lifespan $T_{\max} = \infty$ of Theorem \ref{lwp-t} and associated dynamics:
\begin{enumerate}[label=\textnormal{(\roman*)}]
	\item
	$J(0) = 0$ and $R_0(0) < 0$, or $J(0) < 0$ and $R_0(0) < -\sqrt{|J(0)|}$. Then, 
	\eee{\label{eq20}
	\lim_{t \to  \infty} R_0(t) =  -\sqrt{|J(0)|}, 
		\quad
		\lim_{t \to \infty}|v_0(t)|=1,
	}
	namely $v_0(t)$ converges to the unit circle along a linear segment in the complex plane defined by the direction of $-i\sigma$ (see \eqref{v0lim}).
    \item $J(0) = R_0(0) = 0$. Then, $v_0(t) = v_0(0)$ for all $t>0$. In fact, this scenario only occurs in the case of $\sigma = 0$, i.e. after dropping the relevant restriction from assumption (b) of Proposition \ref{FSS}.
	\end{enumerate}
In all of the above cases, $\no{u(t)}_{\ell^\infty} <  \infty$.
\end{proposition}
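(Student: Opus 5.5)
The argument splits into two parts. First, the central site $n=0$ stays bounded, using the explicit analysis of \eqref{eq6} carried out before Proposition \ref{bu-p}. Second, every other site is trapped in the closed unit disc by Corollary \ref{inv-r}. Together these bound $\no{u(t)}_{\ell^\infty}$ uniformly on $[0,T_{\max})$, and the blow-up alternative of Theorem \ref{lwp-t}(ii) then gives $T_{\max}=\infty$.

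\emph{Bounding the central site.} Assumption (b) and Theorem \ref{sep-t}(ii) make $v_{\pm1}$ constant, so $\sigma$ in \eqref{sigma} is a fixed nonzero constant on $[0,T_{\max})$. Hence \eqref{m0-eq}, \eqref{r0-eq}, the conservation law \eqref{cons1} and the Riccati equation \eqref{eq6} all hold on the whole maximal interval.

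In case (i), $R_0$ is the unique solution of \eqref{eq6} with the given $R_0(0)$, so it coincides on $[0,T_{\max})$ with the explicit formula:
\begin{itemize}
\item if $J(0)=0$, it is \eqref{eq14} with $R_0(0)<0$;
\item if $J(0)<0$, it is \eqref{eq6h} with $R_0(0)<-\sqrt{|J(0)|}$.
\end{itemize}
In both subcases the explicit solution is defined for all $t\ge0$ and is monotone increasing toward $-\sqrt{|J(0)|}$ (which is $0$ when $J(0)=0$). Therefore $|R_0(t)|\le|R_0(0)|$ on $[0,T_{\max})$.

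Formula \eqref{eq13}, obtained by integrating \eqref{eq12}, then gives
\[
|v_0(t)|\le|v_0(0)|+\frac{2|R_0(0)|}{|\sigma|}.
\]
Alternatively, \eqref{cons1} gives $|v_0|^2=1+(J(0)+R_0^2)/|\sigma|^2$, which is bounded by the same reasoning.

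Case (ii) is immediate: $R_0\equiv0$ solves \eqref{eq6} with $J(0)=0$, so $v_0$ is constant by \eqref{eq13}. For the remark that this occurs only when $\sigma=0$: $J(0)=R_0(0)=0$ with $\sigma\ne0$ would force $|v_0(0)|=1$ via \eqref{cons1}, contradicting (a). So if $\sigma\neq0$ this case is vacuous, and if $\sigma=0$ then \eqref{al0} gives $\dot v_0=0$ directly.

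\emph{Bounding the remaining sites.} The hypothesis $\l\setminus\{0\}=\mathcal N_s\cup\mathcal N_b$ and Corollary \ref{inv-r} give $|u_n(t)|\le1$ for all $n\ne0$ and $t\in[0,T_{\max})$. Combined with the bound on $|u_0|=|v_0|$, this yields $\sup_{[0,T_{\max})}\no{u(t)}_{\ell^\infty}<\infty$.

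\emph{Conclusion.} If $T_{\max}$ were finite, Theorem \ref{lwp-t}(ii) would force $\no{u(t)}_{\ell^\infty}\to\infty$, contradicting the uniform bound. Hence $T_{\max}=\infty$.

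The limits in \eqref{eq20} then follow from the explicit formulas as $t\to\infty$, together with \eqref{cons1}: $|v_0|^2\to1+(J(0)+|J(0)|)/|\sigma|^2=1$ since $J(0)\le0$. By \eqref{eq13}, $v_0(t)$ moves along the line $v_0(0)-i\sigma\mathbb R/|\sigma|^2$, which is the segment described in \eqref{v0lim}.

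The main obstacle is justifying that $R_0$ on the maximal interval really coincides with the explicit solution. One must rule out an a priori blow-up of $R_0$ caused by the rest of the lattice. This is settled by noting that $R_0(t)=\mathrm{Im}(\sigma\overline v_0(t))$ is $C^1$ on $[0,T_{\max})$ and satisfies the scalar ODE \eqref{eq6} there. Uniqueness for this locally Lipschitz ODE then identifies it with the global explicit solution on $[0,T_{\max})$.
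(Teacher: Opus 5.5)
Your proof is correct and follows the paper's route. The paper's Case 2/Case 3 analysis of the Riccati equation \eqref{eq6}, combined with \eqref{eq13} and \eqref{cons1}, is what bounds $v_0$. The hypothesis $\l\setminus\{0\}=\mathcal N_s\cup\mathcal N_b$ is there precisely so that Corollary \ref{inv-r} and the blow-up alternative of Theorem \ref{lwp-t}(ii) close the argument.

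You also make explicit two steps the paper leaves implicit: identifying $R_0$ on $[0,T_{\max})$ with the explicit solution via uniqueness for \eqref{eq6}, and the final contradiction with the blow-up alternative. Your handling of case (ii) is correct: it is vacuous for $\sigma\neq0$, and for $\sigma=0$ it follows directly from \eqref{al0}, where \eqref{eq13} would be unavailable.
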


The conditions on the initial value at $n=0$ in Proposition~\ref{ge-p}, which require a modulus strictly greater than one, can also be applied at other lattice sites. This yields initial configurations distinct from that of Proposition~\ref{ge-p} that nevertheless generate global solutions to \eqref{al}, even when multiple sites initially exceed unit modulus. Importantly, case (i) of Proposition~\ref{ge-p} gives rise to the  novel 
phenomenon of \emph{equilibration}, which is discussed next.
\subsubsection{The equilibration phenomenon and its rigorous interpretation}
 We define the \emph{dynamical regime of equilibration} as the scenario in which lattice sites initially residing away from the equilibrium state on the unit circle asymptotically approach that state as $t\rightarrow\infty$. Remarkably, this behavior occurs even for sites with initial modulus strictly greater than one and with immediate neighbors pinned to the unit circle, thus preventing any transfer of excess energy into them.

We now elucidate the mechanism by which this asymptotic equilibration established in Proposition~\ref{ge-p}(i) (also referred to as \emph{global existence and convergence to the unit circle}) is achievable, despite the central site at $n=0$ being dynamically isolated from exchanging energy with its neighbors.
\begin{proposition}
\label{equib}
Let the conditions of Proposition \ref{ge-p} hold and consider the local energy functional $E_{\mathrm{loc}}$ defined in Lemma \ref{eloc-l}. Then, we have the following dynamical scenarios:
\\[1.5mm]
\textnormal{(i)}
\emph{Zero initial local energy $E_{\mathrm{loc}}(0)= 0$:} If $|u_0(0)|>1$ and
	\begin{equation}
		\label{condconst0}
		\cos\bigl(\arg u_0(0) - \arg u_{-1}(0)\bigr) + \cos\bigl(\arg u_0(0) - \arg u_1(0)\bigr) = 0,
	\end{equation}
	then $\frac{d}{dt} \arg u_0(t) \equiv 2$. This scenario describes an asymptotic state of equilibration manifested by the exact linear growth of $\arg u_0(t)$ in the limit $|u_0(t)| \to 1$.
\\[2mm]
\textnormal{(ii)}
\emph{Nonzero local initial energy $E_{\mathrm{loc}}(0)\neq 0$:} If $|u_0(0)|>1$ and
	\begin{equation}
		\label{condconst}
		\cos\bigl(\arg u_0(0) - \arg u_{-1}(0)\bigr) + \cos\bigl(\arg u_0(0) - \arg u_1(0)\bigr)  \neq 0,
	\end{equation}
	then $\frac{d}{dt} \arg u_0(t)\to 2$ as $|u_0(t)| \to 1$. This scenario describes an asymptotic state of equilibration manifested by a nonlinear evolution of $\arg u_0(t)$ while $|u_0(t)|$ is far from the unit circle, which relaxes into an asymptotic linear growth of $\arg u_0(t)$ in the limit $|u_0(t)| \to 1$.
\end{proposition}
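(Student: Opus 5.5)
The plan is to compute the phase evolution of $u_0 = e^{2it} v_0$ directly from the reduced equation \eqref{al0}. Write $v_0 = \rho e^{i\phi}$ with $\rho = |v_0| > 1$ (Theorem \ref{sep-t}), so that $\arg u_0(t) = 2t + \phi(t)$ and $\frac{d}{dt}\arg u_0 = 2 + \dot\phi$. From \eqref{al0},
\begin{equation*}
\dot\phi = \frac{\mathrm{Im}\left(\dot v_0 \overline v_0\right)}{\rho^2} = -\frac{(\rho^2-1)\,\mathrm{Re}\left(\sigma\overline v_0\right)}{\rho^2}.
\end{equation*}
By Lemma \ref{eloc-l}, $\mathrm{Re}(\sigma\overline v_0(t)) = -E_{\mathrm{loc}}(0)/2$ is constant. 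Hence
\begin{equation*}
\frac{d}{dt}\arg u_0(t) = 2 + \frac{E_{\mathrm{loc}}(0)}{2}\left(1 - \frac{1}{|u_0(t)|^2}\right).
\end{equation*}
This single identity drives both cases.

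Next I identify the sign condition. Since $\sigma = -(v_1+v_{-1})$ and $u_{\pm1}(0) = e^{i\theta_{\pm1}}$ (unit modulus, with $v(0)=u(0)$), we get
\begin{equation*}
E_{\mathrm{loc}}(0) = 2|u_0(0)|\left[\cos(\theta_0-\theta_{-1}) + \cos(\theta_0-\theta_1)\right],
\end{equation*}
which is the same kind of computation as in Remark \ref{phaseFSS}. So \eqref{condconst0} is equivalent to $E_{\mathrm{loc}}(0)=0$, and \eqref{condconst} to $E_{\mathrm{loc}}(0)\neq0$.

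In case (i), the identity gives $\frac{d}{dt}\arg u_0 \equiv 2$ at once, on $[0,T_{\max})$ with $T_{\max}=\infty$ by Proposition \ref{ge-p}. In case (ii), the correction term is a nonconstant function of $|u_0(t)|$. By Proposition \ref{ge-p}(i), $|u_0(t)| = |v_0(t)| \to 1$, so the correction $\tfrac{E_{\mathrm{loc}}(0)}{2}(1-|u_0|^{-2}) \to 0$ and $\frac{d}{dt}\arg u_0 \to 2$.

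The main point needing care is the lift of $\arg$ to a $C^1$ function. This is fine because $|v_0| > 1$, so $v_0$ never vanishes. I also have to make sure the convergence claim is applied within the hypotheses of Proposition \ref{ge-p}, where global existence holds. I should also check compatibility: in case (i), $\mathrm{Re}(\sigma\overline v_0)=0$ forces $R_0^2 = |\sigma|^2 M_0$, so $J(0) = -|\sigma|^2 < 0$. The scenario of Proposition \ref{ge-p} then requires $R_0(0) < -|\sigma|$. This can happen, and it matches the phase condition, since $\theta_0$ differs from the Proposition \ref{FSS} choice by a sign of $R_0(0)$.
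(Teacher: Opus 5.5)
Your proposal is correct and follows essentially the paper's route. Both arguments derive the identity $\dot{\theta}_0 = E_{\mathrm{loc}}(0)\,\frac{|v_0|^2-1}{2|v_0|^2}$ from \eqref{al0} and the conservation of $E_{\mathrm{loc}}$, then read off cases (i) and (ii). The paper gets there via the full polar decomposition of $v_0$, including the amplitude equation; you get there more directly via $\mathrm{Im}(\dot{v}_0\overline{v}_0)/|v_0|^2$. You also check that case (i) forces $J(0)=-|\sigma|^2$, so it is compatible with Proposition \ref{ge-p} only when $R_0(0)<-|\sigma|$, a consistency point the paper leaves implicit.
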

\begin{proof}
To analyze the process, we revisit the local energy functional $E_{\mathrm{loc}}(t)$ from Lemma~\ref{eloc-l}. Under the assumptions of Proposition~\ref{ge-p}, the sites at $n=\pm 1$ remain pinned to the unit circle for all times, so $v_{\pm 1}(t) = v_{\pm 1}(0)$. Expressing the state at sites $n=0, \pm 1$ in polar coordinates, $v_n(t) = r_n(t)e^{i\theta_n(t)}$, the local energy functional simplifies to
\begin{equation}
	\label{Eloc-pol}
	E_{\mathrm{loc}}(t) = 2 r_0(t) \left[ \cos\bigl(\theta_0(t) - \theta_{-1}(0)\bigr) + \cos\bigl(\theta_0(t) - \theta_1(0)\bigr) \right],
\end{equation}
which is strictly conserved according to Lemma~\ref{eloc-l}, i.e., $E_{\mathrm{loc}}(t) = E_{\mathrm{loc}}(0)$.

Because the adjacent sites $v_{\pm 1}(t)$ remain stationary, \emph{energy transfer between the central site $v_0$ and its neighbors $v_{\pm 1}$ is entirely blocked}. Consequently, the central site cannot distribute its excess energy into neighboring sites to reduce its amplitude. Yet, as Proposition~\ref{ge-p}(i) asserts, $|v_0(t)| = r_0(t) \to 1$ as $t \to \infty$.

The resolution to this apparent paradox lies in the nonlinear dynamical interplay between the phase angle $\theta_0(t)$ and the amplitude $r_0(t)$. As $t \to \infty$, the amplitude $r_0(t)$ continuously decreases from $r_0(0) > 1$ toward the unit circle ($\lim_{t \to \infty} r_0(t) = 1$), while the phase $\theta_0(t)$ changes accordingly ensuring strict conservation of $E_{\mathrm{loc}}$. In fact, due to the conservation of local energy, $E_{\mathrm{loc}}(t) = E_{\mathrm{loc}}(0)$, its total time derivative vanishes yielding the balance relation
\begin{equation*}
	\frac{\partial E_{\mathrm{loc}}}{\partial r_0} \, \frac{dr_0}{dt} = -\frac{\partial E_{\mathrm{loc}}}{\partial \theta_0} \, \frac{d\theta_0}{dt}.
\end{equation*}
Hence, variations in local energy $E_{\mathrm{loc}}$ due to changes in amplitude and phase angle precisely compensate each other, preserving total local energy.

To analyze this interplay between the phase angle and amplitude for the site at $n=0$ in greater detail, we examine their equations of motion. The AL equation \eqref{al2} at $n=0$ 
in terms of polar coordinates yields
\begin{align}
	\dot{r}_0 &= (r_0^2 - 1)\left[ \sin(\theta_0 - \theta_{-1}) + \sin(\theta_0 - \theta_1) \right],\label{amplcond} \\
	\dot{\theta}_0 &= \frac{r_0^2 - 1}{r_0} \left[ \cos(\theta_0 - \theta_{-1}) + \cos(\theta_0 - \theta_1) \right].\label{phasecond}
\end{align}
By \eqref{Eloc-pol}, the above ODEs can be written as
\eee{
\label{prer0}
	\dot{r}_0 
    = 
    -\frac{r_0^2 - 1}{2r_0} \frac{\partial E_{\mathrm{loc}}}{\partial \theta_0},
\quad
	\dot{\theta}_0 
	= E_{\mathrm{loc}}(0)   \frac{r_0^2-1}{2r_0^2},
}	
which can be further simplified and we arrive at the final version of the ODEs for $r_0(t)$ and $\theta_0(t)$:
\begin{align}
	&\frac{d}{dt}\ln\bigl(r_0^2(t)-1\bigr) = -\frac{\partial E_{\mathrm{loc}}}{\partial \theta_0}, \label{rt1} \\
	&\frac{d}{dt}\theta_0(t) = E_{\mathrm{loc}}(0)  \, \frac{r_0^2(t)-1}{2r_0^2(t)}. \label{thetat1}
\end{align}
We infer from \eqref{rt1} that the monotonic decrease of the amplitude $r_0(t)$, resulting in $r_0(t) \to 1$ as $t \to \infty$, goes along with a positive partial derivative of local energy $E_{\mathrm{loc}}$ with respect to the phase angle $\theta_0$.  Recalling that $u_n(t) = e^{2it} v_n(t)$, the ODE \eqref{thetat1} allows us to distinguish the stated scenarios (i) and (ii) for $\arg u_0(t)$:
\\[2mm]
\emph{(i) Zero initial energy $E_{\mathrm{loc}}(0)= 0$}: If $|u_0(0)| \equiv r_0(0)>1$ and the condition \eqref{condconst0} holds
such that $E_{\mathrm{loc}}(0)=0$, then \eqref{thetat1} shows that $\dot{\theta}_0(t) \equiv 0$ i.e. $\frac{d}{dt} \arg u_0(t) \equiv 2$. This proves the asserted scenario (i).
\\[2mm]
\emph{(ii) Nonzero initial energy $E_{\mathrm{loc}}(0)\neq 0$}: If $|u_0(0)| \equiv r_0(0)>1$ and \eqref{condconst} holds such that $E_{\mathrm{loc}}(0)\neq 0$, then \eqref{thetat1} shows that $\dot{\theta}_0(t) \to 0$ as $r_0(t) \to 1$ i.e. $\frac{d}{dt} \arg u_0(t) \to 2$ as $|u_0(t)| \to 1$. This proves the claimed scenario (ii).
\end{proof}
\begin{remark}
We emphasize that the occurrence of equilibration is a distinct peculiarity of the AL lattice, arising because its Hamiltonian depends solely on intersite coupling (kinetic energy) explicitly depending on the phase differences between neighboring sites, rather than on an independent local potential. Consequently, even for a site dynamically isolated from its adjacent neighbors as described above, suitable initial conditions enable the coupled evolution of its two degrees of freedom, the phase angle and amplitude, to proceed such that the amplitude monotonically decreases toward unity while local energy is conserved through compensating phase dynamics.

Conversely, this equilibration mechanism ceases to exist in systems whose Hamiltonian includes an explicit local potential term in addition to the kinetic coupling, such as the defocusing cubic Discrete Nonlinear Schrödinger (DNLS) lattice, where the presence of local potential terms disrupts the phase-amplitude compensation mechanism that drives equilibration.
\end{remark}

\subsection{The case of general initial data}

We now establish the central result of Theorem \ref{mainthe1} on the finite-time blow-up of the AL equation~\eqref{al} in the case of general initial data in $\ell^\infty$ with the property that at least one lattice point has modulus greater than one.  
   
\begin{proof}[Proof of Theorem \ref{mainthe1}]
Given any initial condition $u(0) \in \ell^\infty$ with $|u_0(0)| > 1$, we first note that the set $\mathcal S$ is non-empty as there are infinitely many singular solutions satisfying \eqref{cond}. 
For any $u^{s} \in \mathcal S$ satisfying \eqref{cond} with associated blow-up time $T_{\max}^s < \infty$, according to Propositions \ref{FSS} and \ref{bu-p} we have 
\begin{equation}
\label{th1}
\lim_{t \to  T_{\max}^{s}}|u_0^{s}(t)| =\infty
\ \Rightarrow \
\lim_{t \to  T_{\max}^{s}} \no{u^{s}(t)}_{\ell^\infty}=\infty.
\end{equation}
The condition \eqref{cond} combined with the reverse triangle inequality readily implies
\eee{\label{th2fund1}
\no{u(0)-u^{s}(0)}_{\ell^\infty}
\geq 
\left|u_0(0) -  u_0^{s}(0)\right| > 0, 
}
i.e. $u(0)\neq u^{s}(0)$ as sequences in $\ell^\infty$.
In turn, the uniqueness of Theorem \ref{lwp-t} implies that $u(t)\neq u^{s}(t)$ as sequences in $\ell^\infty$ for all $t\in [0, \min\{T_{\max}, T_{\max}^{s}\})$,  i.e. 
\begin{equation}
	\label{th2fund3}
	\no{u(t)-u^{s}(t)}_{\ell^\infty} > 0 \quad \forall t\in [0, \min\{T_{\max}, T_{\max}^s\}).
\end{equation}
\uline{Case 1}: $T_{\max} \leq T_{\max}^s$ for some $u^s \in \mathcal S$. 
Then, since $T_{\max}^{s}<\infty$, it follows that $T_{\max}<\infty$. Consequently,  Segal’s alternative \cite[Theorem 1, p. 343]{s1963} implies blow-up in finite time:
\begin{equation}
\label{th2.1}
\lim_{t \to  T_{\max}} \no{u(t)}_{\ell^\infty}=\infty.
\end{equation}  
\uline{Case 2}: $T_{\max}^s < T_{\max}$ for all $u^s \in \mathcal S$. 
First, let us recall that the solution $u^{s}(t)$  is not defined at $T^{s}_{\max}$ due to the blow-up singularity of $u_0^{s}(t)$. Recall also that $|u_0(0)| > |u_0^{s}(0)|$ by \eqref{cond}. If there exists $u^{s} \in \mathcal S$ such that
 \eee{\label{u0up}
|u_0(t)| > |u_0^{s}(t)| \quad \forall t \in [0, T_{\max}^{s}), 
 }
then the blow-up of $u_0^{s}(t)$ at $T_{\max}^{s}$ readily implies blow-up at $T_{\max} \leq T_{\max}^{s}$ for $u_0(t)$, contradicting the fact that $T_{\max}^{s}<T_{\max}$. 
Hence, \eqref{u0up} does not hold. 
Then, since $|u_0(0)| > |u_0^{s}(0)|$, by continuity in $t$ (which is guaranteed by the local well-posedness of Theorem \ref{lwp-t}) we infer the existence of a $T^* \in (0, T_{\max}^{s})$ such that 
\eee{
|u_0(T^*)| = |u_0^{s}(T^*)|.
} 

\begin{figure}[h!]
	\begin{center}
\includegraphics[scale=0.68]{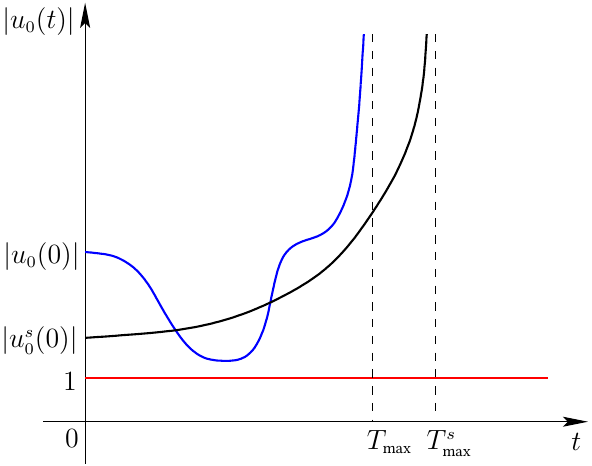}
\hskip 5mm
\includegraphics[scale=0.7]{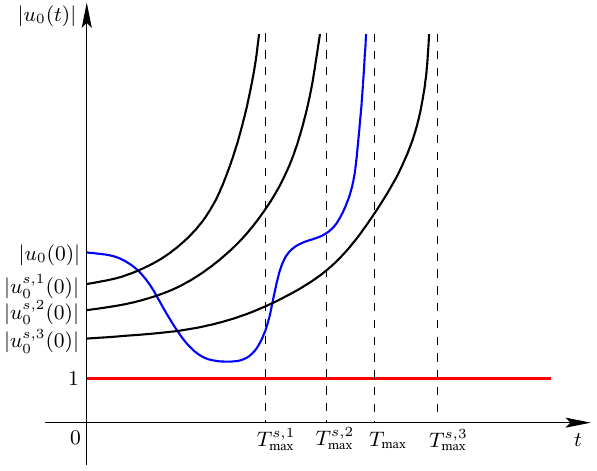}
\\[2mm]
\hspace{-1.5cm}\includegraphics[scale=1]{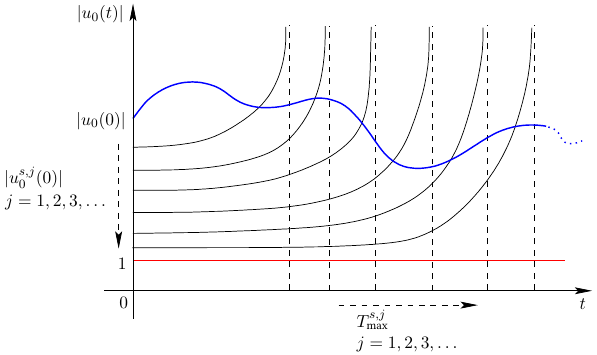}
	\end{center}
	\caption{Top left: The blow-up described in case 1 in the proof of Theorem \ref{mainthe1}. The black curve emanating from $|u_0^s(0)|$ corresponds to a subsolution which blows up at $T_{\max}^s$, leading to the blow-up of the solution emanating from $|u_0(0)|$ (blue curve) at time $T_{\max} < T_{\max}^s$. Note that, in general, the two curves may or may not intersect prior to $T_{\max}$. 
    Top right: Blow-up according to case 1 in the proof of Theorem \ref{mainthe1} in the scenario where, while the solution ``escapes'' from finitely many singular solutions, it is eventually trapped by a singular solution which acts as a 
    subsolution leading to collapse in finite time. 
     Bottom: The \textit{infinite} sequence of singular solutions $u^{s, j}$ that may lead to a global solution, as described in Case 2(ii) in the proof of Theorem \ref{mainthe1}.}
	\label{seq}
\end{figure}

\noindent There are two scenarios to consider:
\begin{enumerate}[label=(\roman*), itemsep=2mm, leftmargin=7mm]
\item There exists a $u^{s} \in \mathcal S$ satisfying \eqref{cond} and such that  $u(T^*) = u^{s}(T^*)$. Then, by uniqueness, starting at $t=T^*$ the two solutions would have to be the same for $t>T^*$. However, by time translation, this would imply that the two solutions coincide also for $t \in [0, T^*)$ and, in particular, at $t=0$, leading to a contradiction since $u(0) \neq u^{s}(0)$. 
\item
$u(T^*) \neq u^{s}(T^*)$ for all $u^{s} \in \mathcal S$ satisfying \eqref{cond}. Then, the only scenario in which blow-up  \textbf{may not} occur is the following:
\textit{for every $u^{s} \in \mathcal S$ satisfying \eqref{cond}, there exists an
$\ve = \ve(u^{s}) >0$  such that  $|u_0(t)| < |u_0^{s}(t)|$ for all $t\in (T_{\max}^{s}-\ve, T_{\max}^{s})$,
so that blow-up for $u_0$ may not readily be inferred from the blow-up of $u_0^{s}$.}  

This possibility is demonstrated as follows. One can construct a sequence  $u^{s, j}(t)$, $j\in \n$, of singular solutions in $\mathcal S$ with corresponding blow-up times  $T_{\max}^{s, j} < \infty$ and satisfying \eqref{cond}, which possess the following property: 
for each $j\in \n$, there exists an $\ve_j = \ve(u^{s, j})>0$ such that 
\eee{
|u_0(t)| < |u_0^{s, j}(t)|, \quad 
t \in (T_{\max}^{s, j} - \ve_j, T_{\max}^{s, j}).
}
Indeed, as shown in Figure \ref{seq}, since $|u_0(t)|>1$ for all $t\in [0, T_{\max})$, we can always fit a $|u_0^{s, j}(t)|$ between $1$ and $|u_0(t)|$ for an appropriate $j\in\n$. 
Through this construction, global existence cannot be ruled out due to the observation that, as $j\to \infty$, $|u_0^{s, j}(0)| \to 1^+$ and so, according to the expressions \eqref{ANTmax}, \eqref{eq11},~\eqref{eq15} and~\eqref{eq6i} for the blow-up times of the singular solutions $u^s \in \mathcal S$, 
we have $\lim_{j \to \infty} T_{\max}^{s, j} = \infty$ thus not allowing us to deduce blow-up for $u$.
\end{enumerate}
\vspace*{-6mm}
\end{proof}

\section{Numerical studies}
\label{num-s}
The class of analytical solutions $\mathcal{S}$ and the general blow-up result of Theorem~\ref{mainthe1} deserve detailed numerical studies aimed at illustrating the strength of the theoretical results.

To this end, we numerically integrate the defocusing AL equation on a periodic lattice of $N$ sites, using initial data that satisfy the blow-up conditions described in Propositions \ref{FSS} and \ref{bu-p} to illustrate the exactness of their analytical blow-up times. We also examine the dynamics emanating from representative general initial conditions to demonstrate the sharpness of Theorem \ref{mainthe1} and to show that the class of singular solutions $\mathcal{S}$ practically serves as subsolutions for the solutions initiated from these general data providing upper bounds for their blow-up times.  Furthermore, we numerically investigate Proposition \ref{ge-p} and highlight a key phenomenon: the domination of blow-up even within sublattices that otherwise support stability and convergence to equilibrium. This final example underlines  the combined theoretical mechanisms established in the preceding sections.

Numerical studies of blow-up phenomena are inherently challenging \cite{ss1999}. In particular, accurately determining the maximum time of existence remains a formidable task, as true blow-up must be reliably distinguished from numerical instabilities. For this reason, we outline the numerical scheme used in our simulations, noting that the class of singular solutions $\mathcal{S}$, with its exact analytical blow-up times, serves as a valuable benchmark for validating the numerical scheme in detecting finite-time singularities.
 
The numerical scheme is based on \eqref{al2} and proceeds as follows.   Decomposing $v_n$ into its real and imaginary parts recasts \eqref{al2} as a system of $2N$ real autonomous ODEs, which we integrate via the method of lines. To respect  the Hamiltonian and integrable structure of the AL equation, we discretize in time using the fourth-order, two-stage Gauss-Legendre implicit Runge-Kutta method. Being both $A$-stable and symplectic, this scheme preserves the quadratic invariants of \eqref{al2} (and consequently of \eqref{al}) to high precision. The solution of \eqref{al} is recovered by the relation $u_n = e^{2it}\,v_n$.

At each time step, the implicit Gauss-Legendre stage equations yield a nonlinear system $F(k)=\mathbf{0}$ for the stage derivatives $k$, which we solve using the complex-step Newton–Krylov framework of \cite{Mitsotakis2025}. The required Jacobian–vector products are evaluated via complex-step differentiation
\begin{equation}
  \mathcal J(\mathbf{x})\,\mathbf{w}
  \;\approx\; \frac{\operatorname{Im}\bigl[\,F(\mathbf{x}+\mathrm{i}\,h_{\text{cs}}\,\mathbf{w})\,\bigr]}{h_{\text{cs}}},
  \qquad h_{\text{cs}}\ll 1,
  \label{eq:cs}
\end{equation}
eliminating subtractive cancellation and thus,  achieving machine-precision accuracy. The resulting linear systems are solved matrix-free \cite{knoll2004jacobian}, using  the restarted Generalized Minimal Residual Method (GMRES) \cite{saad1985generalized}. In all the experiments we fixed $h_{\rm cs}=10^{-10}$, however, the method converges quadratically for any sufficiently small value of $h_{\rm cs}$.

The time step is adaptively controlled by monitoring the drift of the quartic invariant \cite{bckop2025},
\begin{equation}
	I(t) = \operatorname{Im} \sum_n  \Big [u_n\overline{u}_{n+2} \bigl(1 - |u_{n+1}|^2\bigr)
	- \tfrac{1}{2} \bigl(u_n \overline{u}_{n+1}\bigr)^2\Big ],
	\label{eq:inv}
\end{equation}
relative to its initial value $I_0 = I(0)$. We normalize this drift by a fixed reference scale,
\begin{equation}
	S_0 = \sum_n \left ( \Bigl| u_n^0 \overline{u}_{n+2}^0 \bigl(1 - |u_{n+1}^0|^2\bigr) \Bigr|
	+ \tfrac{1}{2} |u_n^0|^2 |u_{n+1}^0|^2 \right ),
	\label{eq:S0}
\end{equation}
which is constructed from the constituent terms of $I$ and evaluated once on the initial profile $u^0$. A tentative time step of size $\Delta t$ is accepted provided that
\begin{equation}
	\frac{|I(t + \Delta t) - I_0|}{S_0} < \varepsilon_{\mathrm{rel}},
	\label{eq:accept}
\end{equation}
where $\varepsilon_{\mathrm{rel}}$ denotes a dimensionless tolerance (typically chosen as $5 \times 10^{-13}$ unless specified otherwise). Failing this, $\Delta t$ is halved and the step is retried, down to a prescribed minimum threshold $\Delta t_{\min} = 5 \times 10^{-16}$. Following an accepted step, $\Delta t$ gradually relaxes back toward the baseline resolution $\Delta t_{\max} = T_{\mathrm{final}} / N_{\mathrm{step}}$. Because $S_0$ remains constant while the cumulative drift $|I(t) - I_0|$ grows monotonically near the singularity, this criterion dynamically detects blow-up, forcing $\Delta t \to 0$ as $t \to T_{\max}$. The normalization ensures that the tolerance is dimensionless and independent of the initial data. Finally, monitoring the drifts of the invariant, momentum, and Hamiltonian provides a robust, independent verification of computational accuracy.

All numerical experiments are conducted on a periodic lattice with $N = 401$ sites (indexed as $n \in \{-200, \dots, 200\}$). Time integration is initialized with a step size of $\Delta t = 10^{-4}$, which is dynamically adjusted according to the adaptive procedure outlined above. For visual clarity, the figures present close-up views of the solution profile centered around $n = 0$.
\begin{figure}[t]
	\begin{center}
		\begin{tabular}{cc}
		\includegraphics[width=0.9\textwidth]{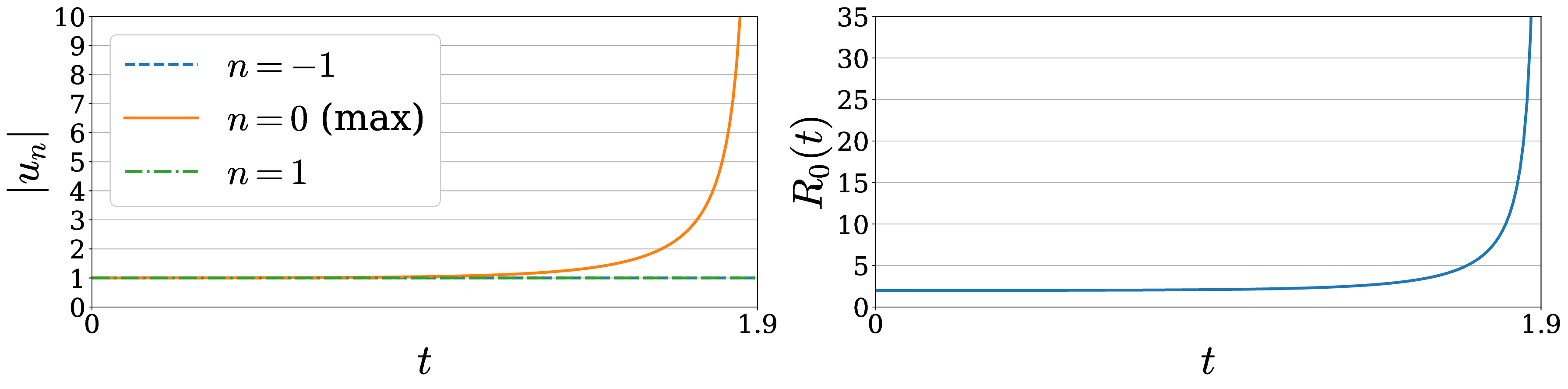}\\
         \includegraphics[width=0.9\textwidth]{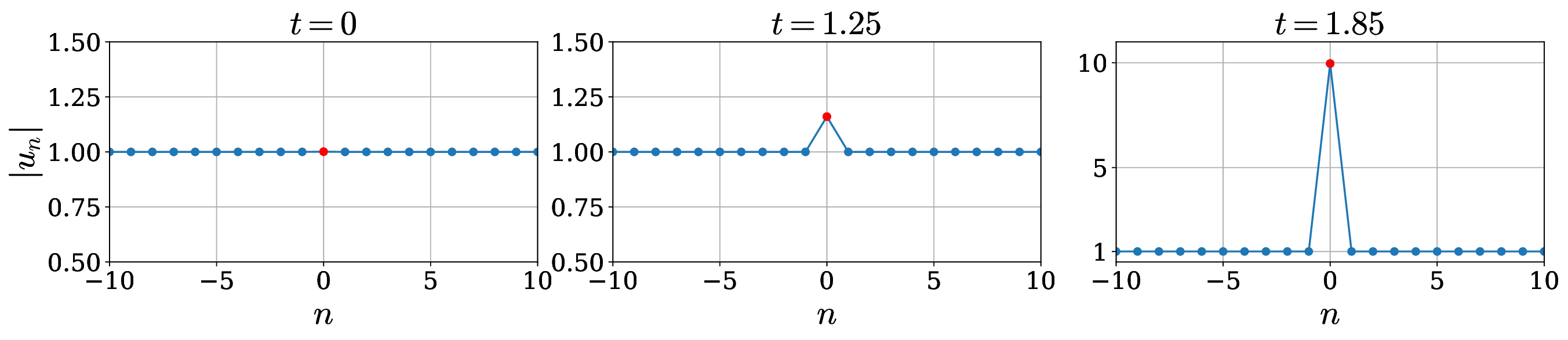}
		\end{tabular}
	\end{center}
	\caption{Example 1: \emph{Fundamental singular solution of Proposition \ref{FSS}}. Numerical dynamics of the fundamental singular solution from Proposition \ref{FSS}, generated by initial data \eqref{eq:initA1} with amplitude $A=1.001$ and phase $\theta=1$. The analytical blow-up time $T_{\mathrm{max}}^{J(0)<0} \approx 1.90035$ exhibits excellent agreement with the detected numerical blow-up time $T_{\mathrm{max}} \approx 1.900007$. Top left: Temporal evolution of $|u_n(t)|$ at the central node and its nearest neighbors ($n = 0, \pm 1$). Top right: Temporal evolution of $R_0(t)$. Bottom row: Spatial profile snapshots at $t = 0, 1.25, 1.85$ across lattice sites $n \in [-10, 10]$.}
	\label{figA1}
\end{figure}		 
\vskip 2mm
\noindent
\textbf{Example 1:} \emph{Fundamental singular solution of Proposition \ref{FSS}}.

In this first experiment, we examine the initial profile
\begin{equation}\label{eq:initA1}
	u_n(0) = \begin{cases}
		i A e^{i\theta}, & n = 0, \quad A > 1, \\
		e^{i\theta}, & n \neq 0,
	\end{cases}
\end{equation}
where all site amplitudes $|u_n(0)|$ lie on the unit circle except at the origin $n = 0$, where $|u_0(0)| = A > 1$. This serves as the prototype initial condition generating the fundamental singular solution of Proposition \ref{FSS}, \emph{representing a discrete $\delta$-function perturbation of the unit-modulus background.} Direct calculation for \eqref{eq:initA1} yields $R_0(0) = 2A > 0$, $M_0(0) = A^2$, $|\sigma|^2 = 4$, and $J(0) = |\sigma|^2(M_0(0)-1) - R_0^2(0) = -4 < 0$. Because $R_0(0) = 2A > \sqrt{|J(0)|}$, the analytical blow-up time given by \eqref{eq6i} simplifies to
\begin{equation}\label{eq:maxtimep}
	T_{\max}^{J(0)<0} = \frac{1}{4}\log\left(\frac{A+1}{A-1}\right).
\end{equation}
Comparing this value with the exact blow-up time \eqref{ANTmax} confirms that the fundamental singular solution falls within the class defined by Proposition \ref{bu-p}(iii), as $T_{\max}^{J(0)<0} = T_{\max}^{\mathrm{fs}}$.

For the numerical study, we choose $A = 1.001$ and $\theta = 1$, which corresponds to $T_{\max}^{J(0)<0} \approx 1.90035$, $R_0(0) = 2.002$, and $J(0) = -4$. With these parameters, the numerical integrator successfully tracks the solution up to $t \approx 1.900007$, extremely close to the exact singular time.

The Top left panel of Figure \ref{figA1} illustrates the temporal evolution of $|u_n(t)|$ at $n = -1, 0, 1$, whereas the Top right panel shows the evolution of $R_0(t)$. The bottom row of Figure \ref{figA1} presents spatial profile snapshots at $t = 0, 1.25, 1.85$ across $n \in [-10, 10]$. These results further demonstrate that the unit circle serves as a phase-space separatrix for the dynamics, consistent with Theorem \ref{sep-t} and Corollary \ref{inv-r}: the amplitudes remain strictly constant ($|u_n(t)| = 1$) for all off-center sites $n \neq 0$ up to $T_{\max}^{J(0)<0}$, while the central amplitude $|u_0(t)|$ grows toward finite-time blow-up at a rate consistent with the one of $\frac{1}{|\sigma|(T_{\max}-t)}$ obtained from the analytical formula \eqref{egfs}.

\begin{figure}[t]
	\begin{center}
		\begin{tabular}{cc}
		\includegraphics[width=0.9\textwidth]{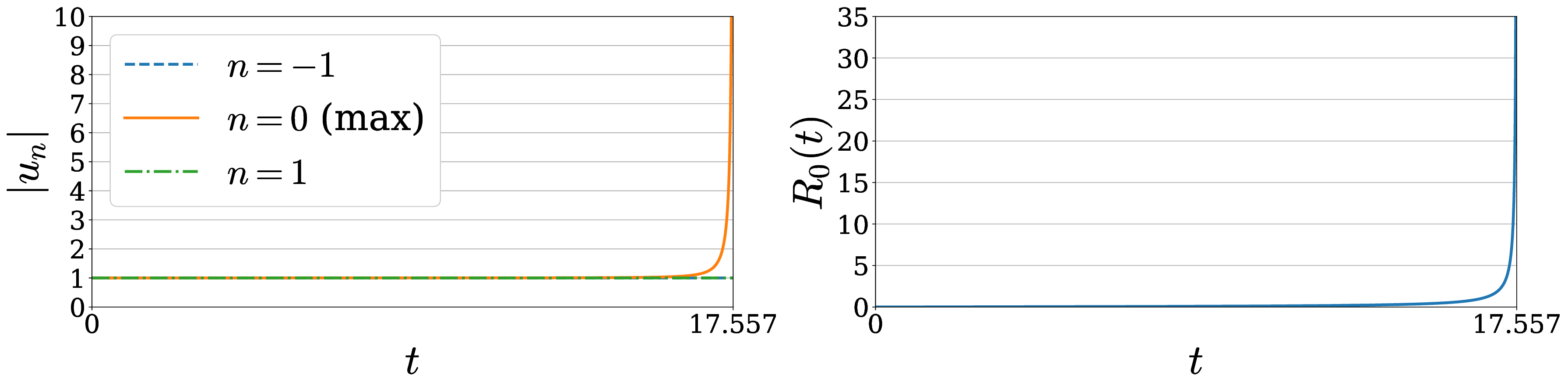}\\
        \hspace{-0.2cm}\includegraphics[width=0.9\textwidth]{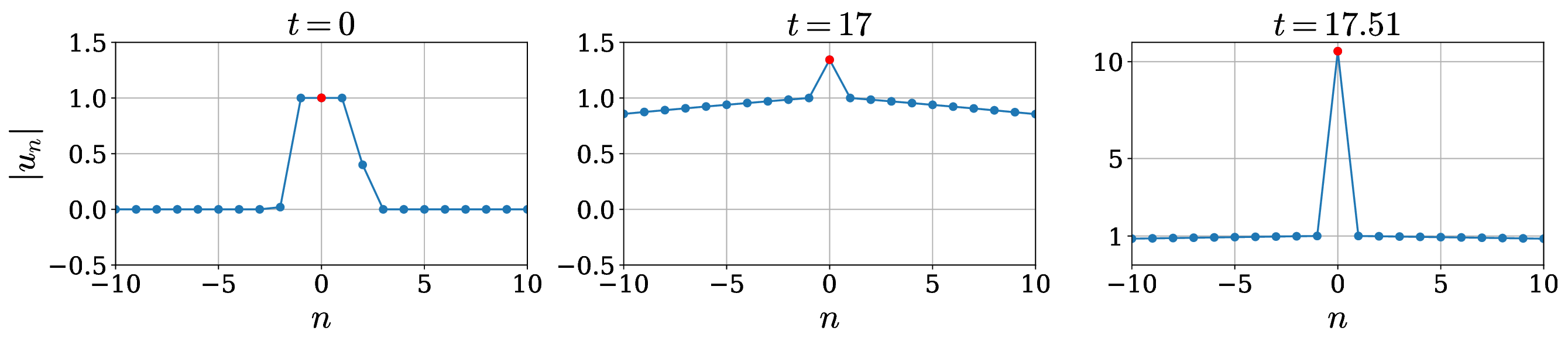}
		\end{tabular}
	\end{center}
	\caption{{Example 2: \emph{Singular solution of Proposition \ref{bu-p}(i) with $R_0(0)=0$ and $J(0)>0$}.} Numerical dynamics of the singular solution from Proposition \ref{bu-p}(i), generated by initial data~\eqref{eq:initA2} with uniform phase parameters $\theta_n = 1$ for all $n \in \mathbb Z$ and coefficients $A_0 = 1.001$, $A_1 = A_{-1} = 1$, $A_2 = 0.4i$, $A_{-2} = 0.02i$, $A_n = 0$, $n\notin \{0, \pm 1, \pm 2\}$. The analytical blow-up time estimate $T_{\mathrm{max}}^{J(0)>0} \approx 17.557$ shows excellent agreement with the detected numerical blow-up time $T_{\mathrm{max}} \approx 17.5575$. Top left: Temporal evolution of $|u_n(t)|$ at the central core and adjacent sites ($n = 0, \pm 1$). Top right: Temporal evolution of $R_0(t)$. Bottom row: Spatial profile snapshots at $t = 0, 17, 17.51$ across lattice sites $n \in [-10, 10]$.}
	\label{figA2}
\end{figure}		 
\vskip 2mm
\noindent
\textbf{Example 2:} \emph{Singular solution of Proposition \ref{bu-p}(i) with $R_0(0)=0$ and $J(0)>0$}.
 
 In this experiment, we investigate the evolution of the initial profile
\begin{equation}\label{eq:initA2}
	u_n(0) = 	A_n e^{i\theta_n}, \quad  n \in \mathbb Z,
\end{equation}
where $A_0 = 1.001$, $A_1 = 1$, $A_{-1} = 1$, $A_2 = 0.4i$, $A_{-2} = 0.02i$,  $A_n = 0$ for $n\notin \{0, \pm 1, \pm 2\}$, and $\theta_n = 1$ for all $n\in \mathbb Z$. Because $A_1 = A_{-1} = 1$ and $A_0 > 1$, this condition satisfies the criteria of Proposition \ref{bu-p}.
This example \emph{numerically models an asymmetric profile under vanishing boundary conditions}.

For these parameter values, $R_0(0) = 0$ and $J(0) \approx 0.008$. Consequently, the initial profile \eqref{eq:initA2} falls within the class characterized by Proposition \ref{bu-p}(i), yielding a predicted analytical blow-up time of $T_{\max}^{J(0)>0} \approx 17.557$. The numerical integrator successfully tracks the singularity up to $t \approx 17.55575$.

The presentation format of the plots in Figure \ref{figA2} (and in subsequent examples, unless otherwise specified) mirrors that of Example 1. The bottom row highlights the invariant partition of the lattice established in Theorem~\ref{sep-t} and Corollary \ref{inv-r}: amplitudes strictly inside the unit circle stay bounded within it for all $t < T_{\max}^{J(0)>0}$, the unit-modulus boundary sites at $n = \pm 1$ remain strictly stationary, and singular growth is driven entirely by the site outside the unit circle ($n = 0$). Interestingly, the interior site amplitudes are driven toward the unit circle, and shortly before the solution collapses ($t \approx 17.51$), \emph{the fundamental singular solution profile spontaneously emerges}. As demonstrated in subsequent examples, this recurrent behavior provides a clear physical justification for designating the solution in Proposition \ref{FSS} as ``fundamental''.

\vskip 2mm
\noindent
\begin{figure}[t]
	\begin{center}
		\begin{tabular}{cc}
		\includegraphics[width=0.9\textwidth]{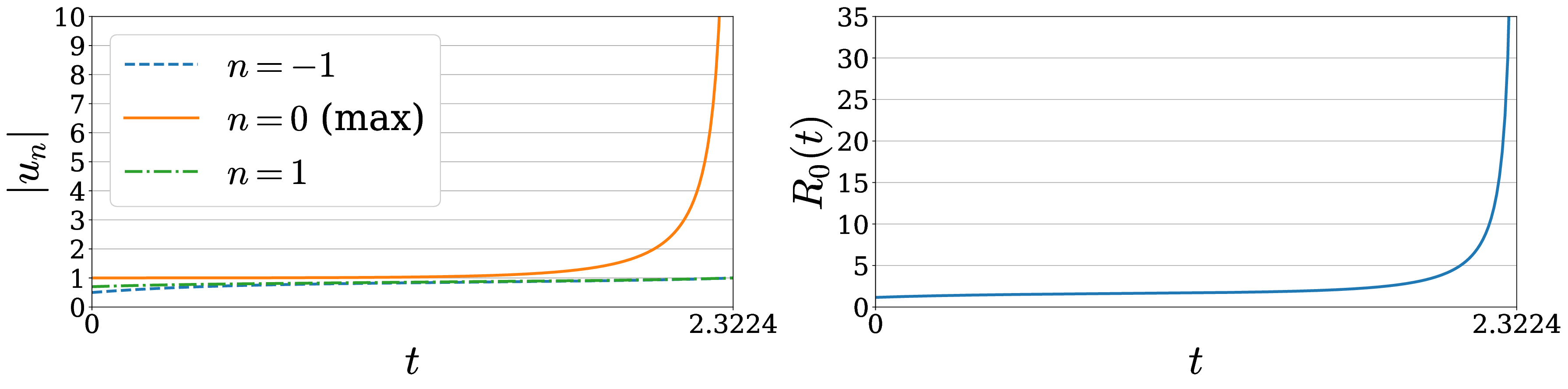}\\
		\includegraphics[width=0.9\textwidth]{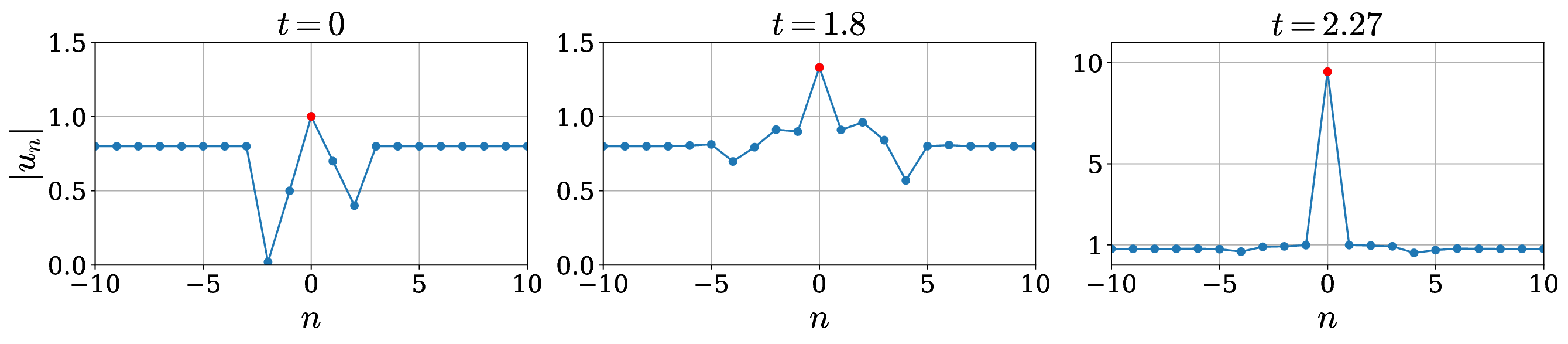}
		\end{tabular}
	\end{center}
	\caption{Example 3 Case A: \emph{General initial data on a nonzero background $0<q_0<1$}. Numerical simulation of the singular solution corresponding to initial data \eqref{eq:initA2} with phases $(\theta_0, \theta_1, \theta_{-1}, \theta_2, \theta_{-2}) = (\frac \pi 4, \frac \pi 6, \frac \pi 3, \pi, \frac \pi 5)$, $ \theta_n = 0$, $n\notin \{0, \pm 1, \pm 2\}$, and coefficients $A_0=1.001i$, $A_1=0.7$, $A_{-1}=0.5$, $A_2=0.4i$, $A_{-2}=0.02i$, $A_n=0.8$, $n\notin \{0, \pm 1, \pm 2\}$, yielding initial values $R_0(0) \approx 1.16$ and $J(0) \approx -1.34$. Top left: Temporal evolution of $|u_n(t)|$ at the central core and adjacent sites $n = 0, \pm 1$. Top right: Temporal evolution of $R_0(t)$. Bottom row: Spatial profile snapshots at $t = 0, 1.8, 2.27$ across lattice sites $n \in [-10, 10]$. Singularity formation occurs at $t = T_{\mathrm{max}} \approx 2.3224$, strictly bounded by the theoretical estimate $T_{\mathrm{max}}^{s,J(0)<0} \approx 3.2781$ associated with a subsolution $u^s \in \mathcal{S}$ sharing identical initial parameters $R_0(0)$ and $J(0)$. }
	\label{figB}
\end{figure}		 
\textbf{Example 3:} \emph{General asymmetric data on a nonzero background $q_0>0$}.

The following numerical experiments provide an initial illustration of Theorem \ref{mainthe1}. We consider general initial data $u(0)$ that fail to satisfy the invariance conditions required by Propositions \ref{FSS} and \ref{bu-p}. Nevertheless, the criteria on $J(0)$ and $R_0(0)$ established in Proposition \ref{bu-p} apply, allowing for the detection of a suitable singular subsolution $u^s \in \mathcal{S}$. This comparison guarantees finite-time blow-up for the solution $u(t)$, establishing that its true blow-up time is bounded above by that of the subsolution, that is, $T_{\mathrm{max}} < T^s_{\mathrm{max}}$. The examples distinguish between the cases $0<q_0<1$ and $q_0>1$.
\vskip 2mm
\noindent\emph{Case A: Nonzero background $0 < q_0 < 1$.}

Here, we examine an asymmetric initial profile of the form \eqref{eq:initA2} with the variety of phases $\theta_0 = \frac \pi 4$, $\theta_1 = \frac \pi 6$, $\theta_{-1} = \frac \pi 3$, $\theta_2 = \pi$, $\theta_{-2} = \frac \pi 5$, $\theta_n = 0$, $n\notin \{0, \pm 1, \pm 2\}$, and parameter values $A_0 = 1.001i$, $A_1 = 0.7$, $A_{-1} = 0.5$, $A_2 = 0.4i$, $A_{-2} = 0.02i$, and $A_n = 0.8$, $n\notin \{0, \pm 1, \pm 2\}$. {\em This parameter set models initial data supported on a constant background of $q_0 < 1$.} At $t=0$, the initial profile features two asymmetric dips below the background amplitude $q_0 = 0.8$, whereas the central site $n=0$ has an amplitude slightly exceeding the unit modulus. 

For this initial profile, $R_0(0) \approx 1.16$ and $J(0) \approx -1.34$, yielding $R_0(0) > \sqrt{|J(0)|}$. Although the unit-modulus condition $|u_{\pm 1}(0)| = 1$ is not fulfilled, the data meet the criteria of Proposition \ref{bu-p}(iii). We therefore anticipate finite-time singularity formation prior to the analytical blow-up time of the associated subsolution $u^s \in \mathcal{S}$, given by $T_{\mathrm{\max}}^{s,J(0)<0} \approx 3.2781$ as calculated from \eqref{eq6i}. Numerical integration confirms this upper bound, yielding an actual blow-up time of $T_{\mathrm{\max}} \approx 2.3224 \le T_{\mathrm{\max}}^{s,J(0)<0}$. We remark that the subsolution associated with the specific values of $R_0(0)$ and $J(0)$ above is not necessarily optimal with respect to bounding $T_{\mathrm{\max}}$. Indeed, another subsolution $u^s \in \mathcal{S}$ constructed with different parameters $R_0(0)$ and $J(0)$ may yield a sharper upper bound on the true blow-up time.

Figure \ref{figB} illustrates the blow-up dynamics of the above initial condition,  with its bottom row displaying snapshots of the spatial profiles at selected times.  While the amplitude at the central site $n=0$ grows, the initial dips emit counter-propagating waves, clearly visible in the snapshot at $t=1.8$,  that drive the off-center site amplitudes back toward the unit-modulus equilibrium state. As the system approaches the singularity, the solution gradually assumes the characteristic profile of the fundamental singular solution,  depicted in the snapshot at $t = 2.27$.
\begin{figure}[t]
	\begin{center}
		\begin{tabular}{cc}
		\includegraphics[width=0.9\textwidth]{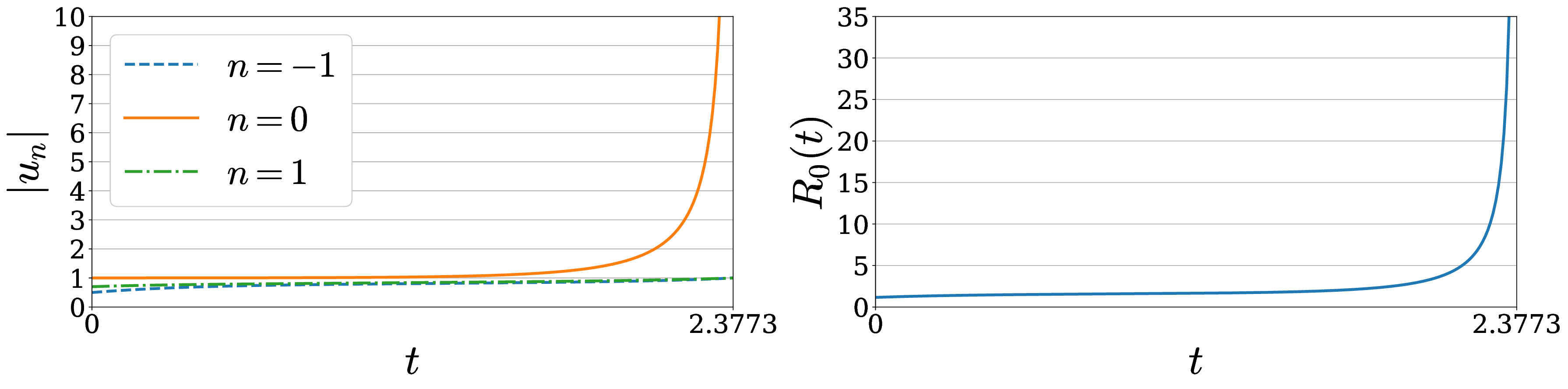}\\
	     \includegraphics[width=0.9\textwidth]{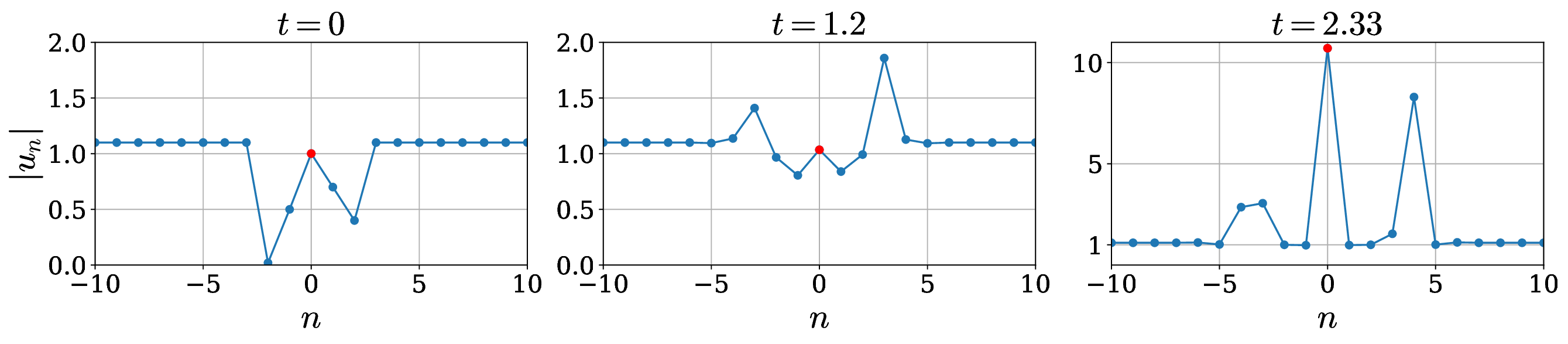}\\
          \includegraphics[width=0.9\textwidth]{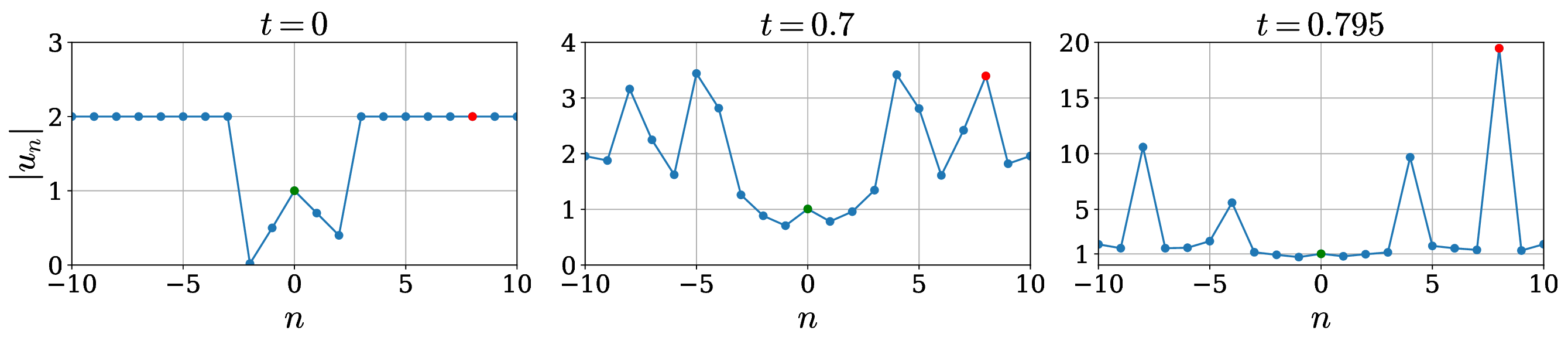}
		\end{tabular}
	\end{center}
	\caption{Example 3 Case B: \emph{General initial data on a nonzero background $q_0>1$}. Numerical simulation of the singular solution corresponding to initial data \eqref{eq:initA2} with phase parameters $(\theta_0, \theta_1, \theta_{-1}, \theta_2, \theta_{-2}) = (\frac \pi 4, \frac \pi 6, \frac \pi 3, \pi, \frac \pi 5)$, $\theta_n = 0$, $n\notin \{0, \pm 1, \pm 2\}$,  and coefficients $A_0=1.001i$, $A_1=0.7$, $A_{-1}=0.5$, $A_2=0.4i$, $A_{-2}=0.02i$, $A_n=1.1$, $n\notin \{0, \pm 1, \pm 2\}$. The initial values $R_0(0) \approx 1.16$ and $J(0) \approx -1.34$. Top left: Temporal evolution of $|u_n(t)|$ at the central core and adjacent sites $n = 0, \pm 1$. Top right: Temporal evolution of $R_0(t)$. Middle row: Spatial profile snapshots at $t = 0, 1.2, 2.33$ across lattice sites $n \in [-10, 10]$ for $A_n=1.1$, $n\notin \{0, \pm 1, \pm 2\}$. Singularity formation occurs at $t = T_{\mathrm{max}} \approx 2.37730$, strictly bounded by the theoretical estimate $T_{\mathrm{max}}^{s,J(0)<0} \approx 3.2781$ associated with a subsolution $u^s \in \mathcal{S}$ sharing identical values of $R_0(0)$ and $J(0)$. Bottom row: Evolution of the same initial profile for $A_n=2$, $n\notin \{0, \pm 1, \pm 2\}$, yielding an accelerated blow-up time $T_{\mathrm{max}} \approx 0.82$. An optimal fundamental singular subsolution can be constructed such that $T_{\mathrm{max}} \approx 0.82 < T^{\mathrm{fs}}_{\mathrm{max}} \approx 0.8467$.}
	\label{figBB}
\end{figure}		 
\vskip 2mm
\noindent
\emph{Case B: Nonzero background $q_0 > 1$.}

We next examine the initial condition \eqref{eq:initA2} with $A_n=1.1$, $n\notin \{0, \pm 1, \pm 2\}$,  keeping all remaining parameters identical to Case A. \emph{This choice models initial data supported on a background $q_0 > 1$.} At $t=0$, the profile features two asymmetric dips below the background level $q_0 = 1.1$, while the central site $n=0$ has an amplitude slightly exceeding the unit modulus, yet remaining strictly below $q_0$.

For this profile, $R_0(0) \approx 1.16027$ and $J(0) \approx -1.34$, as only $A_n$, $n\notin \{0, \pm 1, \pm 2\}$, have been modified while the local profile at $n = 0, \pm 1$ remains identical to Case A of $q_0=0.8$. Consequently, we predict a finite-time blow-up bounded by $T_{\mathrm{max}} < T_{\mathrm{max}}^{s,J(0)<0} \approx 3.2781$. Numerical integration yields $T_{\mathrm{max}} = 2.37730$, remarkably close to the singularity time observed in Case A. This close agreement is expected given that $q_0 = 1.1$ lies close to unity as the case A with $q_0=0.8$ and the local dynamics near $n=0$ initially coincide. Furthermore, {\em Theorem \ref{mainthe1} combined with the continuous dependence on the initial data established in Theorem \ref{lwp-t} carries an important physical implication for moderate background amplitudes $q_0 \gtrsim 1$}: since the singular subsolutions $\mathcal{S}$ demonstrate that central-site-driven blow-up is  independent of initial data away from the core $n = 0, \pm 1$, the central site is expected to remain the primary driver of singularity formation in this example as well. That is, for moderately small $q_0 \gtrsim 1$, local growth at $n=0$ outpaces any prospective instability arising from the moderate background $q_0>1$, ultimately leading to localized blow-up from the centered site. 

The predicted dynamics are confirmed by the numerical results presented in Figure \ref{figBB}. The snapshots of the spatial profiles in the middle row, clearly illustrate the invariant lattice partition established in Theorem \ref{sep-t} and Corollary \ref{inv-r}. Notably, although background sites experience growth as $t \to T_{\mathrm{max}}$, the profile of the fundamental singular solution spontaneously emerges by $t = 2.33$. Singularity formation remains strictly localized at the central site at $n=0$, whose amplitude growth outpaces the growth of the surrounding growing background nodes.

By increasing the background to $q_0=2$ (by choosing $A_n=2$, $n\notin \{0, \pm 1, \pm 2\}$), we expect the blow-up to be driven by sites located initially on the background. While the profile remains unchanged around the site $n=0$, and the values $R_0(0)>0$ and $J(0)<0$ remain the same for this considerably increased value of $q_0$, the continuous dependence on the initial data guarantees that the corresponding subsolution associated with these values of $R_0(0)$ and $J(0)$ will become even less optimal in bounding $T_{\mathrm{max}}$ from above. This is indeed the case, as we find numerically that $T_{\mathrm{max}}\approx 0.82$. In the bottom row of Figure \ref{figBB}, we show snapshots of the spatial profiles of the solution for $q_0=2$. Due to the elevated background amplitude, singularity formation occurs at a node initially situated on the background $q_0$, which in this example corresponds to site $n=8$. The central site $n=0$, having amplitude $|u_0(0)|=1.001$, does not manage to outpace the growth of the surrounding background sites. However, the blow-up mechanism remains the same: the snapshot at $t=0.79$ clearly shows that the growing sites tend to form the profile of the fundamental singular solution once again. 

This example underlines the following important fact: since the choice of the site playing the role of the central one is irrelevant in the construction of the fundamental singular solution (which, as discussed in Example 1, belongs to the class of singular solutions of Proposition \ref{bu-p}(iii)), one can systematically ``engineer'' a fundamental singular solution to serve as an optimal subsolution. For instance, selecting a central amplitude $|u_{\mathrm{center}}(0)| = 1.07$ yields an exact blow-up time of $T_{\mathrm{max}}^{\mathrm{fs}} \approx 0.8467$. By exploiting the arbitrary tuning of $A = |u_{\mathrm{center}}(0)| > 1$, this approach enables the construction of optimal subsolutions that approximate $T_{\mathrm{max}}$ from above with high precision. This strategy is naturally motivated by the initial profile itself: when $q_0 > 1$, infinitely many nodes in \eqref{eq:initA2} lie strictly outside the unit circle. 

In the subsequent example, we present a detailed theoretical construction of optimal singular solutions and subsolutions for a general solution $u$ within the framework of Theorem \ref{mainthe1}.
\begin{figure}[t]
	\begin{center}
		\begin{tabular}{cc}
		\includegraphics[width=0.9\textwidth]{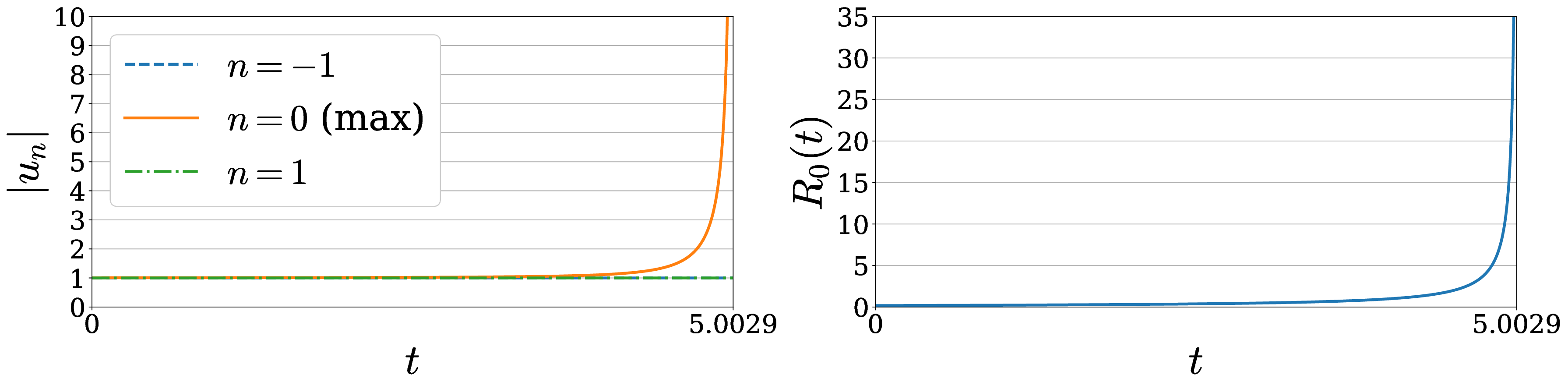}\\
		\includegraphics[width=0.9\textwidth]{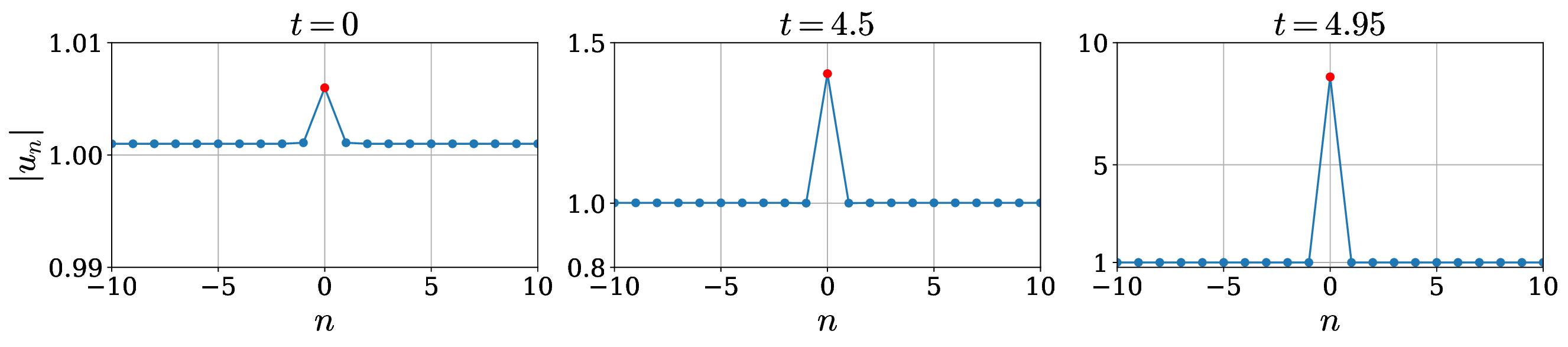}
		\end{tabular}
	\end{center}
\caption{Example 4: \emph{General initial data on a background $q_0>1$ with all lattice sites outside the unit circle}. 
Numerical simulation of the singular solution corresponding to initial data \eqref{eqabove} with parameters $q_0=1.001$, $A=0.1$, and $\theta=\frac \pi 2$, yielding initial values $R_0(0) \approx 0.1731$ and $J(0) \approx 0.0181$. Top left: Temporal evolution of $|u_n(t)|$ at the central core and adjacent nodes ($n = 0, \pm 1$). Top right: Temporal evolution of $R_0(t)$. Bottom row: Spatial profile snapshots at $t = 0, 4.5, 4.95$ across lattice sites $n \in [-10, 10]$. Singularity formation occurs at $t = T_{\mathrm{max}} \approx 5.0029$. The theoretical estimate $T_{\mathrm{max}}^{s,J(0)>0} \approx 4.9092$, associated with a solution $u^s \in \mathcal{S}$ sharing identical initial parameters $R_0(0)$ and $J(0)$, provides a tight lower bound for $T_{\mathrm{max}}$. Optimal subsolutions $\tilde{u}^s \in \mathcal{S}$ can be constructed to approximate $T_{\mathrm{max}}$ with high accuracy; see main text for details.}
\label{Fig4All}
\end{figure}		 
\vskip 2mm
\noindent
\textbf{Example 4:} \emph{General initial data on a background $q_0>1$ with all lattice sites outside the unit circle. Theoretical construction of optimal singular solutions and subsolutions $u^s\in\mathcal{S}$ within the framework of Theorem \ref{mainthe1}.}

Continuing from Example 3 regarding initial data on a nonzero background $q_0$, we consider here the initial condition
\begin{equation}\label{eqabove}
	u_n(0)=q_0+ Ae^{-2n^2+i\theta},
    \quad
    |u_n(0)|^2 = (q_0 + A e^{-2n^2} \cos\theta)^2 + A^2 e^{-4n^2} \sin^2\theta
    =
    q_0^2 + A^2 e^{-4n^2} + 2q_0 A e^{-2n^2} \cos\theta
\end{equation}
where $q_0>1$, $A>0$, and $\theta\in [-\frac \pi 2, \frac \pi 2]$. \emph{The initial condition \eqref{eqabove} models the regime where all lattice sites initially lie strictly outside the unit circle}, illustrated here by a Gaussian perturbation on the top of the background $q_0 > 1$.

Figure \ref{Fig4All} illustrates the blow-up dynamics for $q_0=1.001$, slightly above unit modulus, $A=0.1$, and phase $\theta=\frac \pi 2$. For these parameters, we compute $R_0(0)\approx 0.1731$ and $J(0)=0.0181$, placing the initial data within the class governed by Proposition \ref{bu-p}(i). Via formula \eqref{eq10}, the corresponding solution $u^s\in\mathcal{S}$ associated with these values of $R_0(0)$ and $J(0)$ yields the theoretical  estimate $T_{\mathrm{max}}^{s, J(0)>0}\approx 4.9092$. Numerically, the  blow-up time is determined to be $T_{\mathrm{max}}\approx 5.0029$. We observe that, while $T_{\mathrm{max}}^{s, J(0)>0}$ provides a sharp analytical estimate for $T_{\mathrm{max}}$, it only provides a \textit{lower bound} according to the top right panel of Figure \ref{seq}, as explained next.
\\[2mm]
\emph{A. Theoretical construction of optimal singular solutions and subsolutions in class $\mathcal{S}$ of Theorem \ref{mainthe1}.}
The example above belongs to the dynamical scenario depicted in the top right panel of Figure \ref{seq}. Although the exact solution $u$ may ``escape'' the bound provided by one singular solution $u^s\in \mathcal{S}$, it remains trapped by another singular solution  $\tilde{u}^s\in\mathcal{S}$ that acts as a subsolution. In this context, the initial profile \eqref{eqabove} with $q_0>1$ suggests that a fundamental singular solution from Proposition \ref{FSS} (generated by $\delta$-type initial data of the form~\eqref{eq:initA1}) can be tailored to serve as an optimal subsolution $\tilde{u}^s\in\mathcal{S}$ whose theoretical blow-up time $T_{\mathrm{max}}^{\mathrm{fs}}$ yields an exact upper bound. To demonstrate this, for any given blow-up time $T_{\mathrm{max}}>0$, we set 
\begin{equation}
	\label{eng1}
	T_{\mathrm{max}}=T_{\mathrm{max}}^{\mathrm{fs}}= \frac{1}{4}\log\left(\frac{A+1}{A-1}\right)
\end{equation}
and solve for the required amplitude $A>1$. The explicit solution of \eqref{eng1} is given by
\begin{equation}
	\label{eng2}
	A_{T_{\mathrm{max}}}^s = \frac{e^{4T_{\mathrm{max}}} + 1}{e^{4T_{\mathrm{max}}} - 1} = \frac{e^{2T_{\mathrm{max}}} + e^{-2T_{\mathrm{max}}}}{e^{2T_{\mathrm{max}}} - e^{-2T_{\mathrm{max}}}} = \coth(2T_{\mathrm{max}}).
\end{equation}
Consequently, given the blow-up time $T_{\mathrm{max}}$ of a general solution $u$, the initial data \eqref{eq:initA1} with amplitude $A_{T_{\mathrm{max}}}^s$ define an optimal fundamental singular solution blowing up precisely at $T_{\mathrm{max}}$. Furthermore, any choice of $A^s < A_{T_{\mathrm{max}}}^s$ generates a singular solution $\tilde{u}^s\in\mathcal{S}$ via Proposition \ref{FSS} that acts as a rigorous subsolution for $u$.  This is due to the fact that the right-hand side $\coth(2T)$ of \eqref{eng2} decreases monotonically from $+\infty$ downward to $1$ as $T \to \infty$. Applying this to our numerical example ($T_{\mathrm{max}}\approx 5.0029$), the formula \eqref{eng2} yields $A^s_{T_{\mathrm{max}}}=\coth(10.0058) \approx 1 + 4.074765 \times 10^{-9}$.

We emphasize that the construction of optimal subsolutions can be achieved via any family of singular solutions and, in particular, via the combined class $\mathcal S$ of Propositions \ref{FSS} and \ref{bu-p}.  For example, if one prefers to work directly with the parameter pair $(R_0(0), J(0))$ of the initial condition $u(0)$, an optimal subsolution $\tilde{u}^s\in\mathcal{S}$ can be constructed analogously by matching the blow-up time  $T_{\mathrm{max}}$ to the analytical formulas provided by Proposition~\ref{bu-p}, though the algebra can be slightly more involved. Specifically, when $R_0(0), J(0) > 0$, we fix $J(0)$ and solve the blow-up time relation \eqref{eq10},
\begin{equation}
	\label{eng3}
	T_{\mathrm{max}}= \frac{1}{\sqrt{J(0)}} \left[ \frac{\pi}{2} - \arctan\left(\frac{R_0(0)}{\sqrt{J(0)}}\right) \right] = T_{\mathrm{max}}^{J(0)>0},
\end{equation}
for the unknown parameter $R_0(0) \in \mathbb{R}$. Utilizing the trigonometric identity $\tan\left(\frac{\pi}{2} - \phi\right) = \cot(\phi)$, the solution to~\eqref{eng3} is
\begin{equation}
	\label{eng4}
	R_0^s(0)_{T_{\mathrm{max}}} = \sqrt{J(0)} \cot\left(T_{\mathrm{max}} \sqrt{J(0)}\right), \quad \text{valid for } T_{\mathrm{max}} \sqrt{J(0)} \in (0, \pi).
\end{equation}
The restriction $T_{\mathrm{max}} \sqrt{J(0)} \in (0, \pi)$ arises naturally because $T_{\mathrm{max}}\sqrt{J(0)} = \frac{\pi}{2} - \arctan\left(R_0(0)/\sqrt{J(0)}\right)$, and the range of $\frac{\pi}{2} - \arctan(x)$ is strictly $(0, \pi)$ for all $x \in \mathbb{R}$.

For initial data \eqref{eqabove} with $T_{\mathrm{max}}\approx 5.0029$ and $J(0)=0.0181$, substituting these values into \eqref{eng4} gives $R_0^s(0)_{T_{\mathrm{max}}} \approx 0.168747$. To verify the self-consistency of this construction, we observe that $R_0^s(0)_{T_{\mathrm{max}}} < R_0(0) = 0.1731$, with the two values being exceptionally close. This proximity directly accounts for why the initial theoretical estimate $T_{\mathrm{max}}^{s, J(0)>0}\approx 4.9092$ lies so close to $T_{\mathrm{max}}\approx 5.0029$: the former corresponds to a singular solution $u^s\in\mathcal{S}$ with $R_0(0)>0$ and $J(0)>0$ that fails to trap $u$, whereas replacing $R_0(0)$ with $R_0^s(0)_{T_{\mathrm{max}}}$ yields the optimal singular solution $\tilde{u}^s\in\mathcal{S}$ that blows up at the exact blow-up time $T_{\mathrm{max}}$. Finally, since $R_0(t)$ is strictly monotonic increasing (per \eqref{r0-eq}), any singular solution $u^s\in \mathcal{S}$ from Proposition \ref{bu-p}(i) satisfying $0<R_0^s(0) < R_0^s(0)_{T_{\mathrm{max}}}$ serves as a rigorous subsolution for $u$.
\\[2mm]
\emph{B. The role of the phase of general initial data in blow-up dynamics}.
Returning to initial condition \eqref{eqabove}, fixing $q_0 = 1.001$ and $A = 0.1$ while shifting the phase parameter to $\theta = \frac \pi 4$ yields initial values $R_0(0) \approx 0.1224$ and $J(0) \approx 0.6124$. Although $R_0(0)$ is smaller compared to the previous configuration ($\theta = \frac \pi 2$), the quantity $J(0)$ increases substantially. Consequently, via formula \eqref{eq10}, the subsolution $u^s \in \mathcal{S}$ associated with these updated initial values yields a theoretical estimate of $T_{\mathrm{max}}^{s, J(0)>0} \approx 1.8090$. This indicates that the singular solution for $\theta = \frac \pi 4$ should blow up significantly earlier than in the case of $\theta = \frac \pi 2$. Indeed, the numerical simulation detects the blow-up time at $T_{\mathrm{max}} \approx 1.8753$, again confirming that $T_{\mathrm{max}}^{s, J(0)>0}$ provides a sharp lower bound for $T_{\mathrm{max}}$. As discussed in paragraph A, optimal subsolutions $\tilde{u}^s \in \mathcal{S}$ can be similarly constructed for this regime.

Therefore, holding the amplitudes of a general initial condition $u(0)$ fixed, varying its phase can exert a crucial influence on the blow-up time of the corresponding singular solution. We recall that the role of the initial phase was also highlighted in the context of the fundamental singular solutions of Proposition \ref{FSS} (see Remark \ref{phaseFSS}).

\begin{figure}[t]
	\begin{center}
		\includegraphics[width=0.9\textwidth]{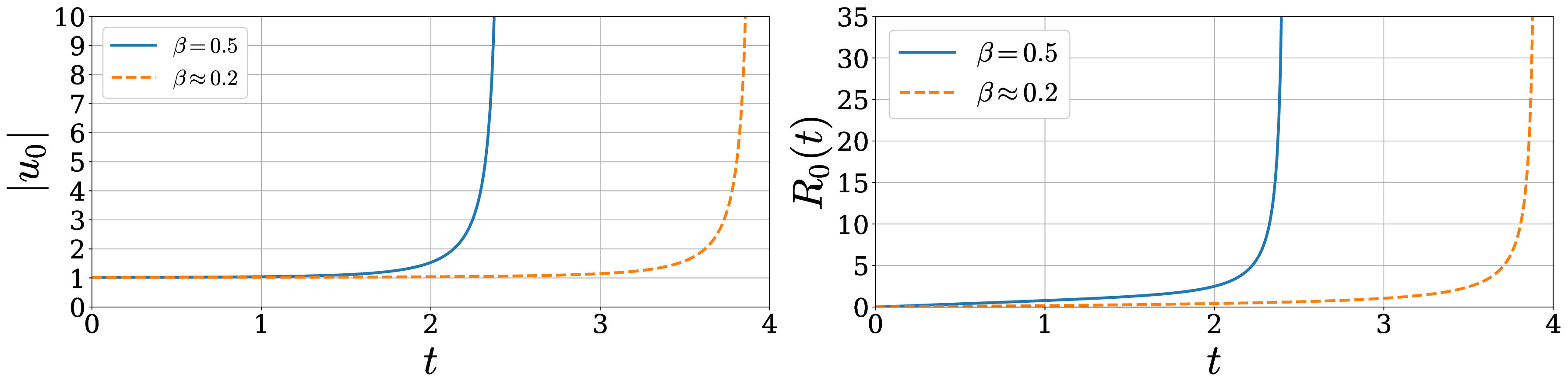}
		\includegraphics[width=0.9\textwidth]{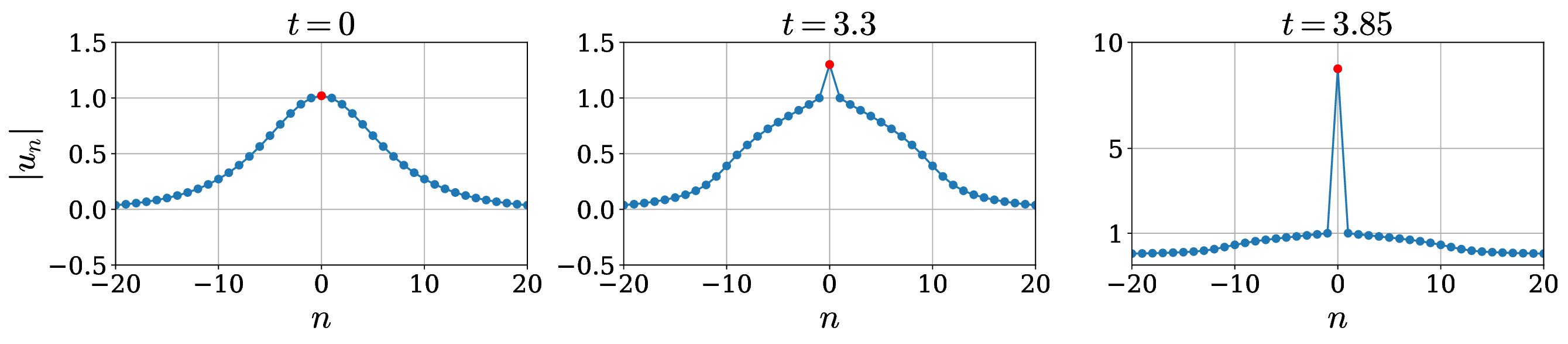}
		\includegraphics[width=0.9\textwidth]{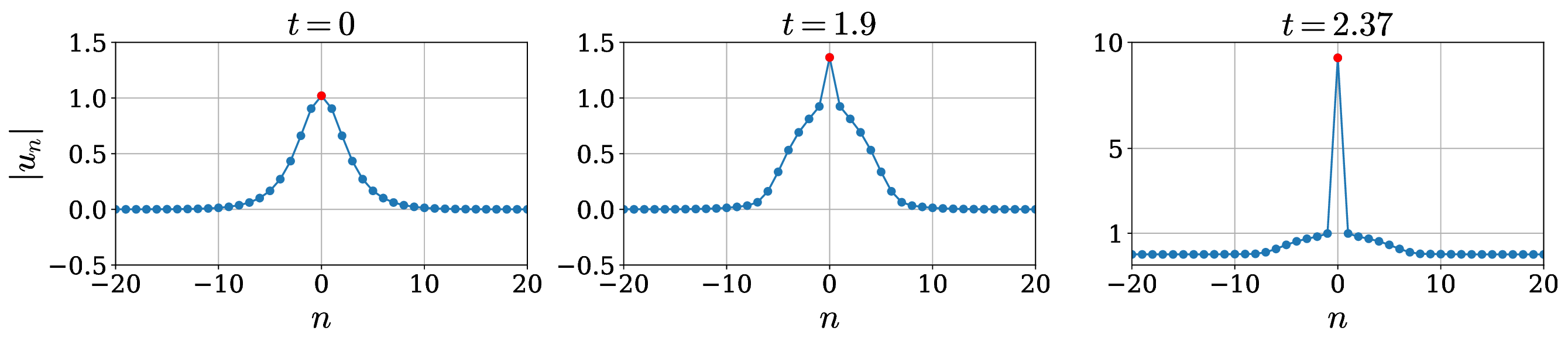}
	\end{center}
	\caption{Example 5 Case A: \emph{Exponentially decaying initial data \eqref{eq:example1id}}. Top row: Dashed orange curves trace the temporal evolution of $|u_0^s(t)|$ and $R_0^s(t)$ for initial data with $|u_0^s(0)|=|\alpha_1|=1.019$ and $\beta=\beta_1 \approx 0.1946$, exhibiting an analytical blow-up time of $T_{\mathrm{max}}^{s,J(0)}\approx 4.010005$. Solid blue curves trace $|u_0(t)|$ and $R_0(t)$ for initial data with $|u_0(0)|=|\alpha_2|=1.02$ and $\beta=\beta_2=0.5$, which undergoes numerical blow-up at $T_{\mathrm{max}}\approx 2.42287$. Here, $u^s$ serves as a subsolution for $u$. Middle row: Spatial profile snapshots of the subsolution $u^s$. Bottom row: Corresponding spatial profile snapshots of the solution $u$.}
	\label{figExpdata}
\end{figure}
\begin{figure}[t]
	\begin{center}
		\includegraphics[width=0.9\textwidth]{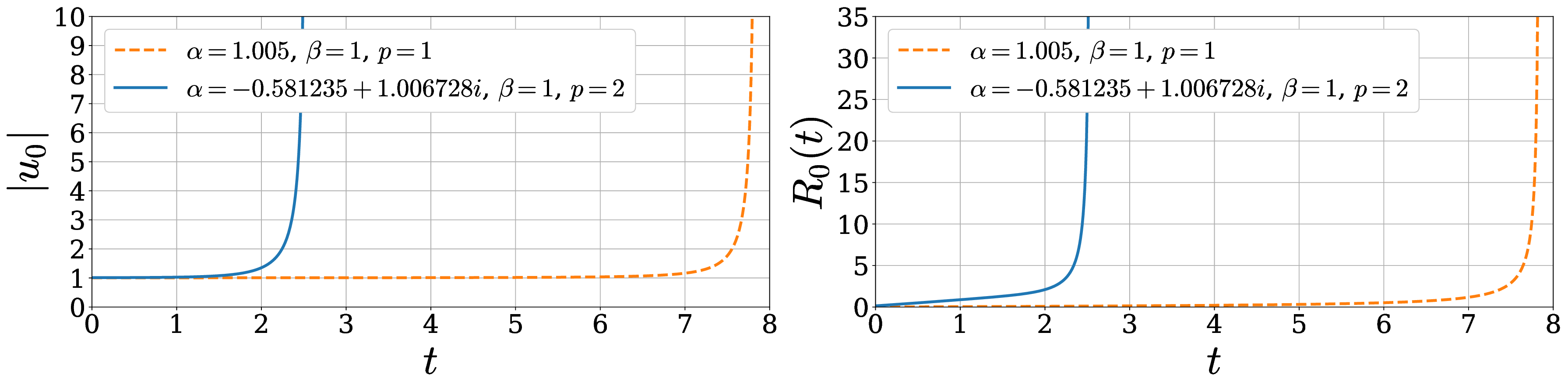}
		\includegraphics[width=0.9\textwidth]{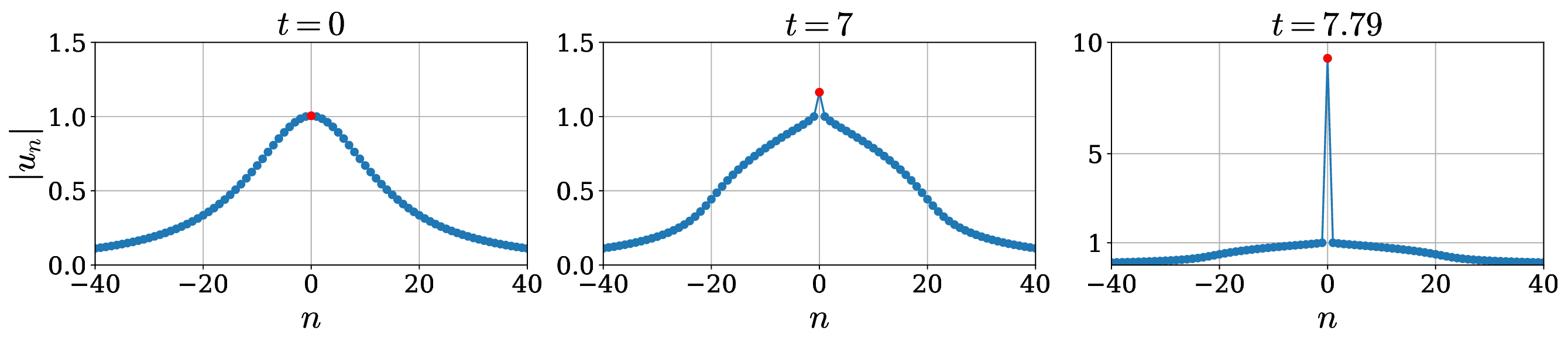}
		\includegraphics[width=0.9\textwidth]{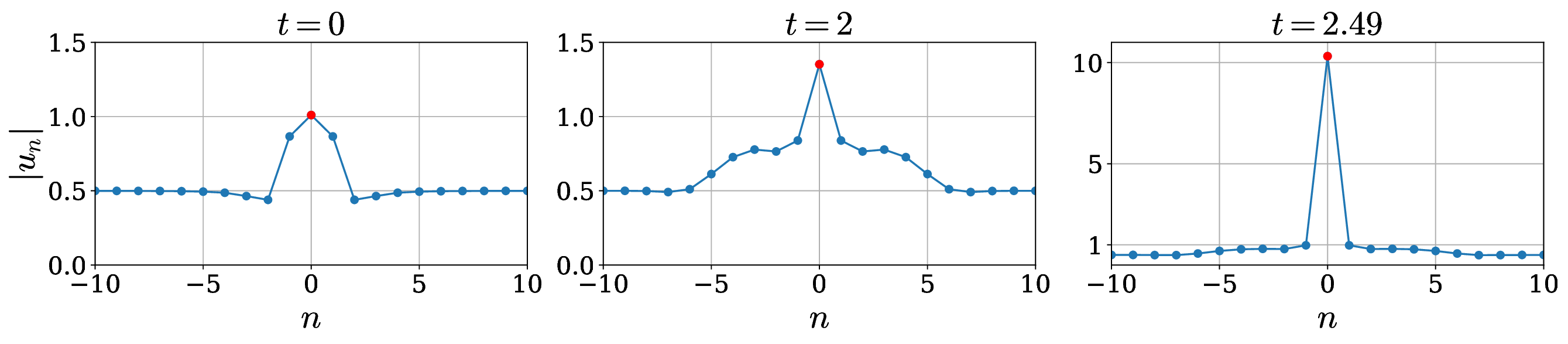}
	\end{center}
	\caption{Example 5 Case B: \emph{Algebraically decaying initial data \eqref{eq:example3id}}. Top row: Dashed orange curves trace the temporal evolution of $|u_0^s(t)|$ and $R_0^s(t)$ for initial data with $\alpha=1.005$, $\beta=1$, $p=1$, exhibiting a blow-up time of $T_{\mathrm{max}}\approx 7.8441$. Solid blue curves trace $|u_0(t)|$ and $R_0(t)$ for initial data with nonzero background as in \eqref{eq:nzbgrat} with $\alpha=-0.581235+1.006728i$, $\beta=1$ and $p=2$, which undergoes numerical blow-up at $T_{\mathrm{max}}\approx 2.5390$. Middle row: Spatial profile snapshots of the solution $u^s$ with initial condition \eqref{eq:example3id}  on a zero background. Bottom row: Corresponding spatial profile snapshots of the solution with the nonzero background.}
	\label{figExpdata2}
\end{figure}

\vskip 2mm
\noindent
\textbf{Example 5:} \emph{Localized initial data with exponential and algebraic decay.}

In this example, we investigate the dynamics of solutions generated by localized initial data exhibiting either exponential decay, such as the $\mathrm{sech}$-type profile \eqref{eq:example1id}, or algebraic decay \eqref{eq:example3id}, as constructed in Section \ref{4-s}.
\\[2mm]
\emph{Case A: Initial data with exponential decay.}

Figure \ref{figExpdata} summarizes the numerical simulations for two distinct choices of initial data \eqref{eq:example1id}. The first case considers parameter choices $\alpha$ and $\beta$ configured such that the corresponding solution $u^s \in \mathcal{S}$ possesses an analytical blow-up time given by \eqref{eq:timesec}. Setting $\varepsilon_1=0.019$ yields an initial amplitude $|u^s(0)|=|\alpha_1|=1.019$, with the parameter $\beta$ uniquely determined by $\beta=\beta_1=\log(1+\varepsilon_1+\sqrt{\varepsilon_1(2+\varepsilon_1)})\approx  0.1946$. Formula \eqref{eq:timesec} then predicts an analytical blow-up time of $T_{\mathrm{max}}^{s,J(0)>0}\approx 4.010005$. The second case selects $\varepsilon_2=0.02$, leading to an amplitude $|u(0)|=|\alpha_2|=1.02$ and $\beta=\beta_2=0.5$, which generates a solution $u \notin \mathcal{S}$ corresponding to general $\mathrm{sech}$-type initial data. Here, $|u(0)|>|u^s(0)|$, satisfying the conditions of Theorem \ref{mainthe1}. Under the framework of Proposition \ref{bu-p}(i), this initial condition exhibits metrics (cf. \eqref{j0sech}) $R_0(0)=0$ and $J(0)\approx 0.1322$. According to Theorem \ref{mainthe1}, $u^s$ acts as a subsolution for $u$, implying that the blow-up time of $u$ must satisfy $T_{\mathrm{max}} < T_{\mathrm{max}}^{s,J(0)>0} \approx 4.010005$.

The top row of Figure \ref{figExpdata} illustrates this dynamical behavior. Dashed (orange) curves track the temporal profiles $|u_0^s(t)|$ and $R_0^s(t)$, which lie strictly below the corresponding curves for $u$. The solution $u$ undergoes finite-time blow-up at $T_{\mathrm{max}}=2.42287 < T_{\mathrm{max}}^{s,J(0)} \approx 4.010005$ (see Figure \ref{seq}). 
Spatial snapshots of the subsolution $u^s$ and the solution $u$ are displayed in the middle and bottom rows, respectively. Both cases clearly demonstrate the spontaneous formation of the characteristic fundamental singular profile.
\\[2mm]
\emph{Case B: Initial data with algebraic decay.}

We examine two choices of initial data with algebraic decay. The first case considers the rational initial condition~\eqref{eq:example3id} with  parameter choices $\alpha=1.005=|u_0^s(0)|$, $\beta=1$, $\gamma =|\alpha|-\beta = 0.005$ and $p=1$, which satisfy the necessary conditions such that the corresponding solution $u^s \in \mathcal{S}$ possesses an analytical blow-up time given by \eqref{eq:maxtrat}. For this particular initial condition, $R_0^s(0)=0$ and $J^s(0)=0.0401$, while the solution's maximal time of existence given by \eqref{eq:maxtrat} is $T_{\max}^{s,J(0)>0}\approx 7.84418$. Using our numerical method, we computed the solution up to $T_{\max}\approx 7.8441$. 

The second case considers the rational initial condition \eqref{eq:example3id} with $p=2$ on a nonzero background, given specifically by
\begin{equation}\label{eq:nzbgrat}
u_n(0)=0.5+\frac{\alpha}{\beta+(|\alpha|-\beta)n^4},
\end{equation}
with $\alpha=-0.581235 + 1.006728i$ and $\beta=1$. For these parameters, we have $|u_0(0)|=1.01 > |u^s_0(0)|=1.005$, as well as $R_0(0)\approx 0.1407 > 0$ and $J(0)\approx 0.0405 > 0$. Thus, the corresponding solution is expected to blow up according to the criteria for $R_0(0)>0$ and $J(0)>0$. The numerical blow-up time is $T_{\max}\approx 2.5390 < T_{\max}^{s,J(0)>0}$; hence, the singular solution $u^s \in \mathcal{S}$ of the first case acts as a singular subsolution for $u(t)$. Figure \ref{figExpdata2} summarizes the numerical simulations illustrating these dynamics (in a manner analogous to Figure \ref{figExpdata}).

\begin{figure}[t]
	\begin{center}
		\begin{tabular}{cc}
		\includegraphics[width=0.9\textwidth]{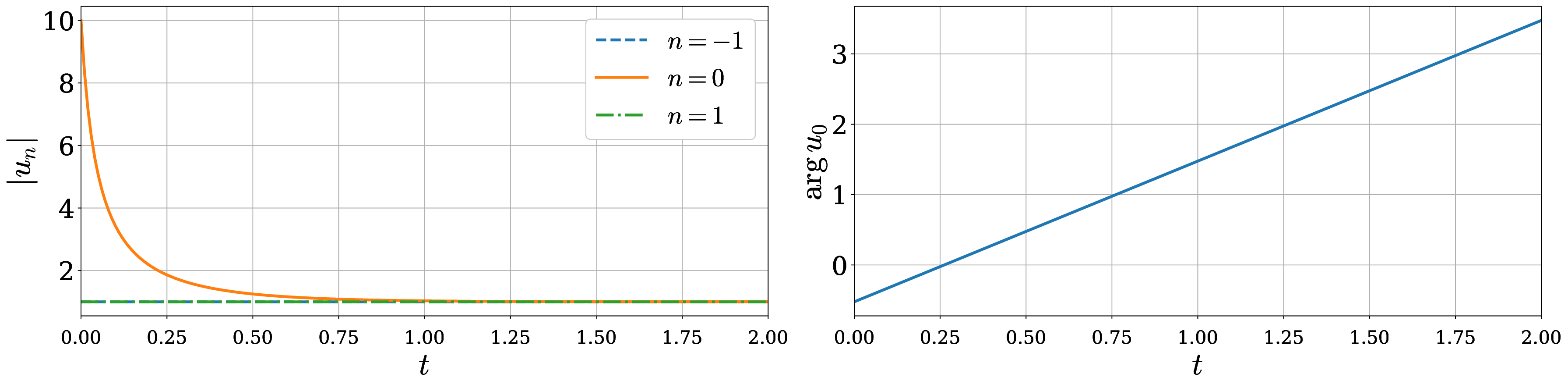}
		\end{tabular}
	\end{center}
	\caption{Example 6: \emph{Equilibration through scenario (i) of Proposition~\ref{equib}.} Initial data \eqref{eq:xmplege} with $A=10$, $B=1$, and $\theta=\frac \pi 3$. Left panel: evolution of $|v_n(t)|$ for the sites $n=0, \pm 1$. Right panel: exact linear evolution of $\arg u_0(t) = 2t - \frac \pi 6$.}
	\label{figB7bb}
\end{figure}

\begin{figure}[t]
	\begin{center}
		\begin{tabular}{cc}
		\includegraphics[width=0.9\textwidth]{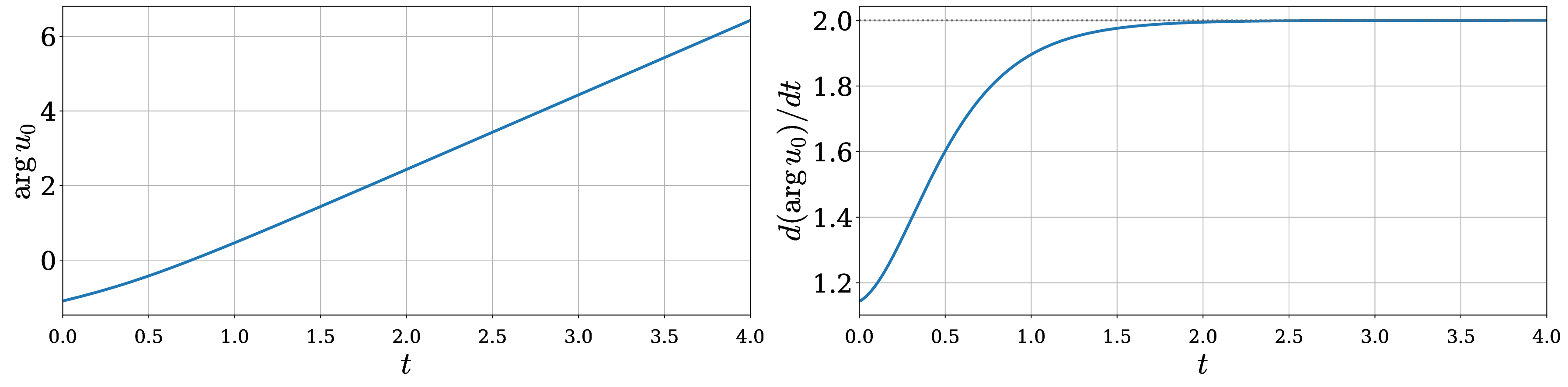}
		\end{tabular}
	\end{center}
	\caption{Example 6: \emph{Equilibration through scenario (ii) of Proposition~\ref{equib}.} Initial data \eqref{eq:initA2} with $A_0=10$, $A_n=1$ for $n\neq 0$, $\theta_0=\frac{\pi}{3}-\frac{1}{20}$, $\theta_1=\frac{\pi}{3}$, $\theta_{-1}=0$, and $\theta_n=\frac{\pi}{6}$ for $n\notin\{0,\pm 1\}$. Left panel: evolution of $\arg u_0(t)$. Right panel: evolution of $\frac{d}{dt}\arg u_0(t)$.}
	\label{figB7bbb}
\end{figure}
\vskip 2mm
\noindent
\textbf{Example 6.}  \emph{Global existence above unit modulus and equilibration.}

In this example, we present three experiments illustrating the global existence results of Proposition \ref{ge-p} and the phenomenon of equilibration.
\\[2mm]
\emph{I. Global existence dynamics.} The first two experiments consider the initial condition
\begin{equation}\label{eq:xmplege}
u_n(0)= \left\{\begin{array}{lll}
-iAe^{i\theta}, &A>1, &n=0,\\
e^{i\theta}, & & n=\pm 1,\\
Be^{i\theta}, & & \text{otherwise},
\end{array}\right.
\end{equation}
where $R_0(0)=-2A$, $M_0(0)=A^2$, $|\sigma|^2=4$ and $J(0)=-4<0$.  Hence the assumptions of Proposition \ref{ge-p}(i) are satisfied and we expect  $\lim_{t\to\infty}|u_0(t)|=1$ and $\lim_{t\to\infty}R_0(t)=-\sqrt{|J(0)|}=-2$. 

In the first experiment we set $B=0$ and in the second $B=1$, while in both experiments we set $A=10$ and $\theta=\frac \pi 3$. The top row of Figure \ref{figB7A} displays the evolution of $|u_n(t)|$ for $n=0,\pm 1$ and of $R_0(t)$ for the case $B=0$,  demonstrating the convergence to the unit circle for $|u_0(t)|$, the fact that $|u_{\pm 1}(t)|$ remain stationary, and the predicted asymptotic behavior for $R_0(t)$. Snapshots of the evolution of the initial condition are depicted in the bottom row of  Figure \ref{figB7A}. 
\begin{figure}[t]
	\begin{center}
		\begin{tabular}{cc}
		\includegraphics[width=0.9\textwidth]{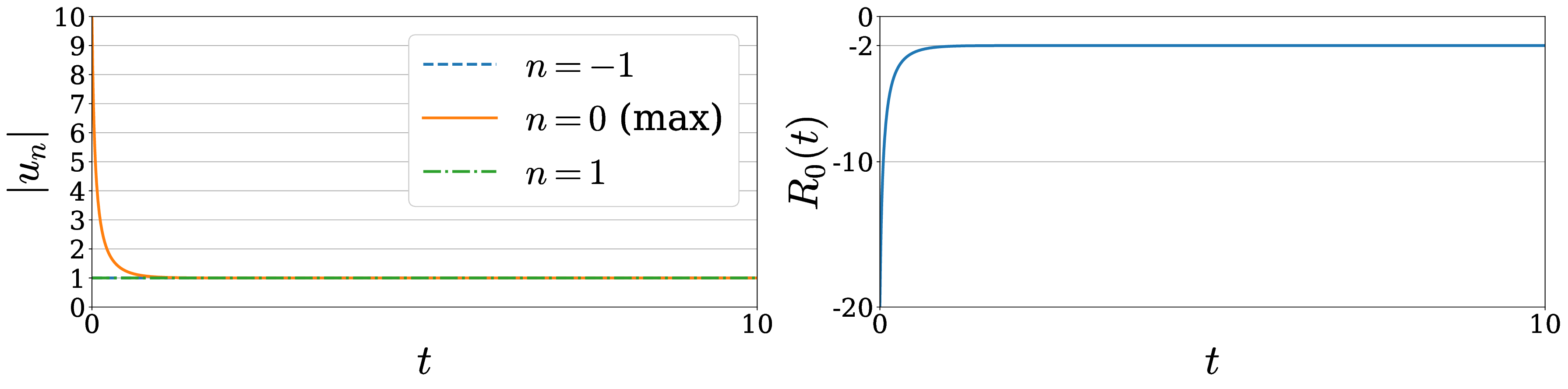}\\
       \includegraphics[width=0.9\textwidth]{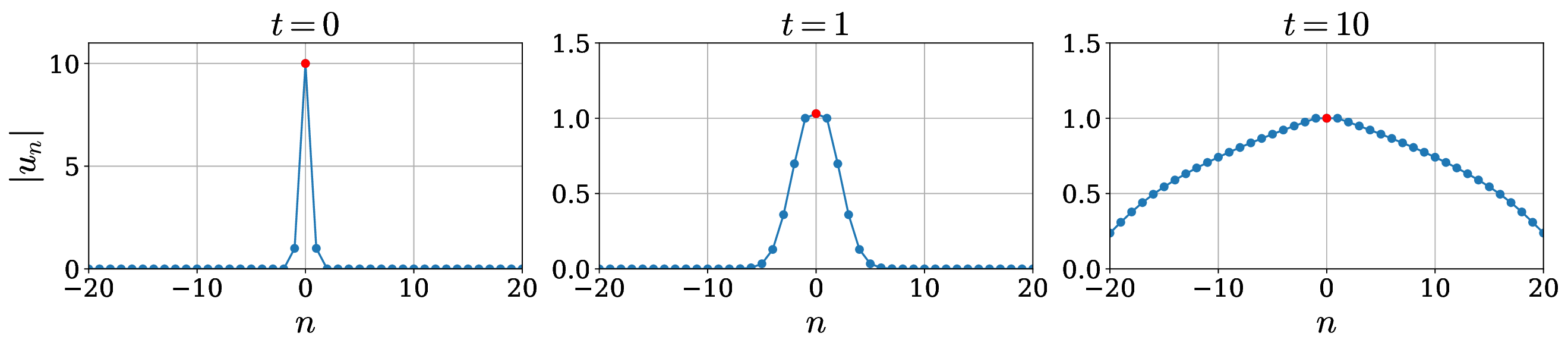}
		\end{tabular}
	\end{center}
	\caption{Example 6: \emph{Dynamics of the initial condition \eqref{eq:xmplege}}. Parameters $A=10$, $B=0$, and $\theta=\frac \pi 3$. Top row: the left panel depicts the evolution of $|u_n(t)|$ for the sites $n=0, \pm 1$, while the right panel shows the evolution of $R_0(t)$. Bottom row: snapshots of the evolution of the initial profile.}
	\label{figB7A}
\end{figure}		 
The evolution of $|u_n(t)|$ for $n=0, \pm 1$ and of $R_0(t)$ is identical in the case $B=1$. What differentiates the corresponding solutions is illustrated in Figure~\ref{figB7B}, which depicts the spatiotemporal evolution of $|v_n(t)|$ for both cases. The left panel of Figure \ref{figB7B} shows the dynamics for $B=0$. While the central site at $n=0$ converges to equilibrium, the remaining sites, except for those at $n=\pm 1$ which remain constant, disperse to the left and right across the entire lattice. This overall behavior for the initial condition \eqref{eq:xmplege} in the case $B=0$ resembles that of a  high-amplitude, ``bright-soliton'' initial datum in the continuous defocusing NLS equation, which disperses self-similarly as $t \to \infty$, as proved in \cite{dz2003}. On the other hand, the behavior of all sites below unit modulus is compliant with the analysis of \cite{chf2024}; the formation of the caustics cone analyzed therein is clearly observable.
\begin{figure}[t]
	\begin{center}
    \includegraphics[width=0.4\textwidth]{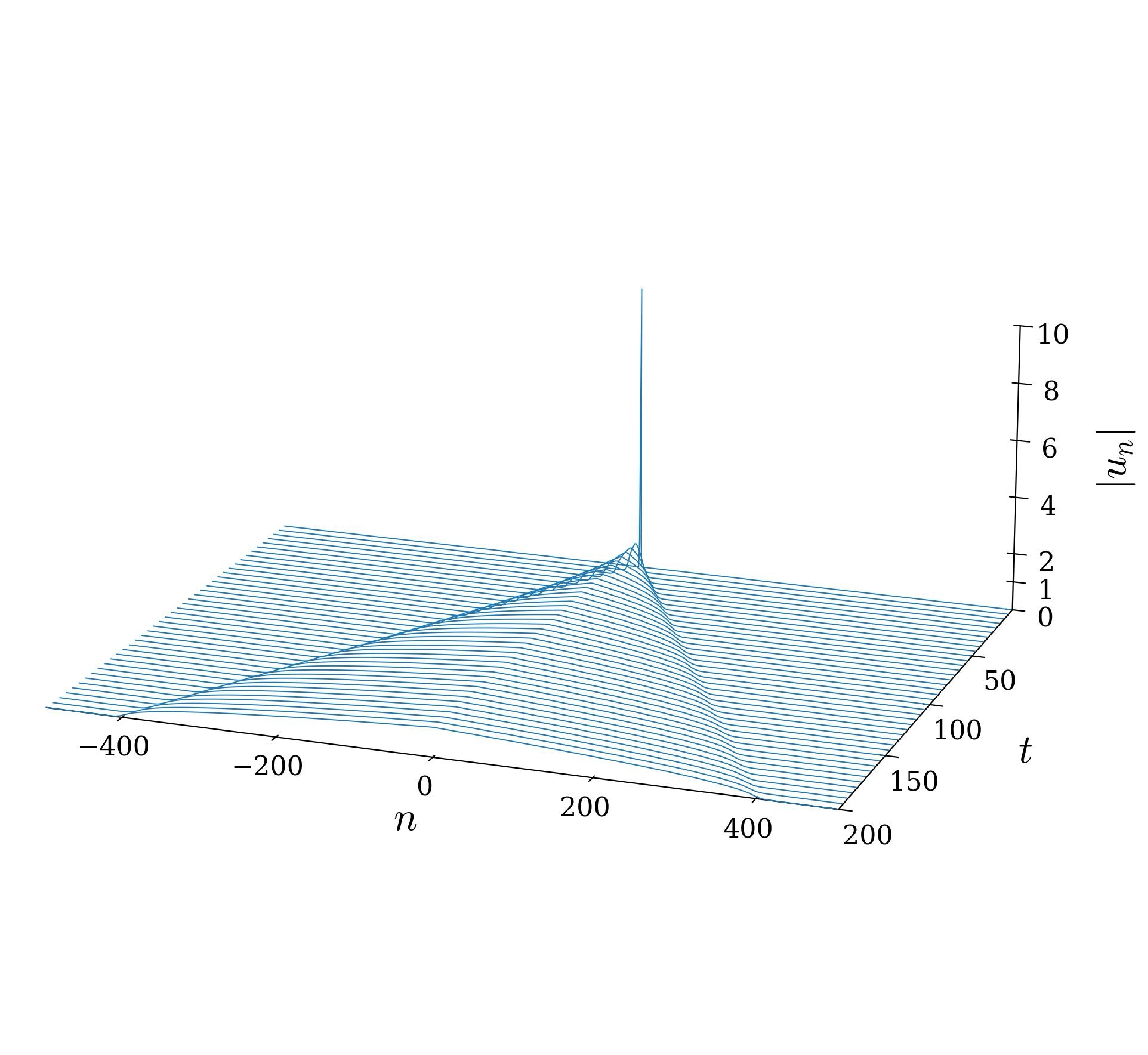}
    \includegraphics[width=0.4\textwidth]{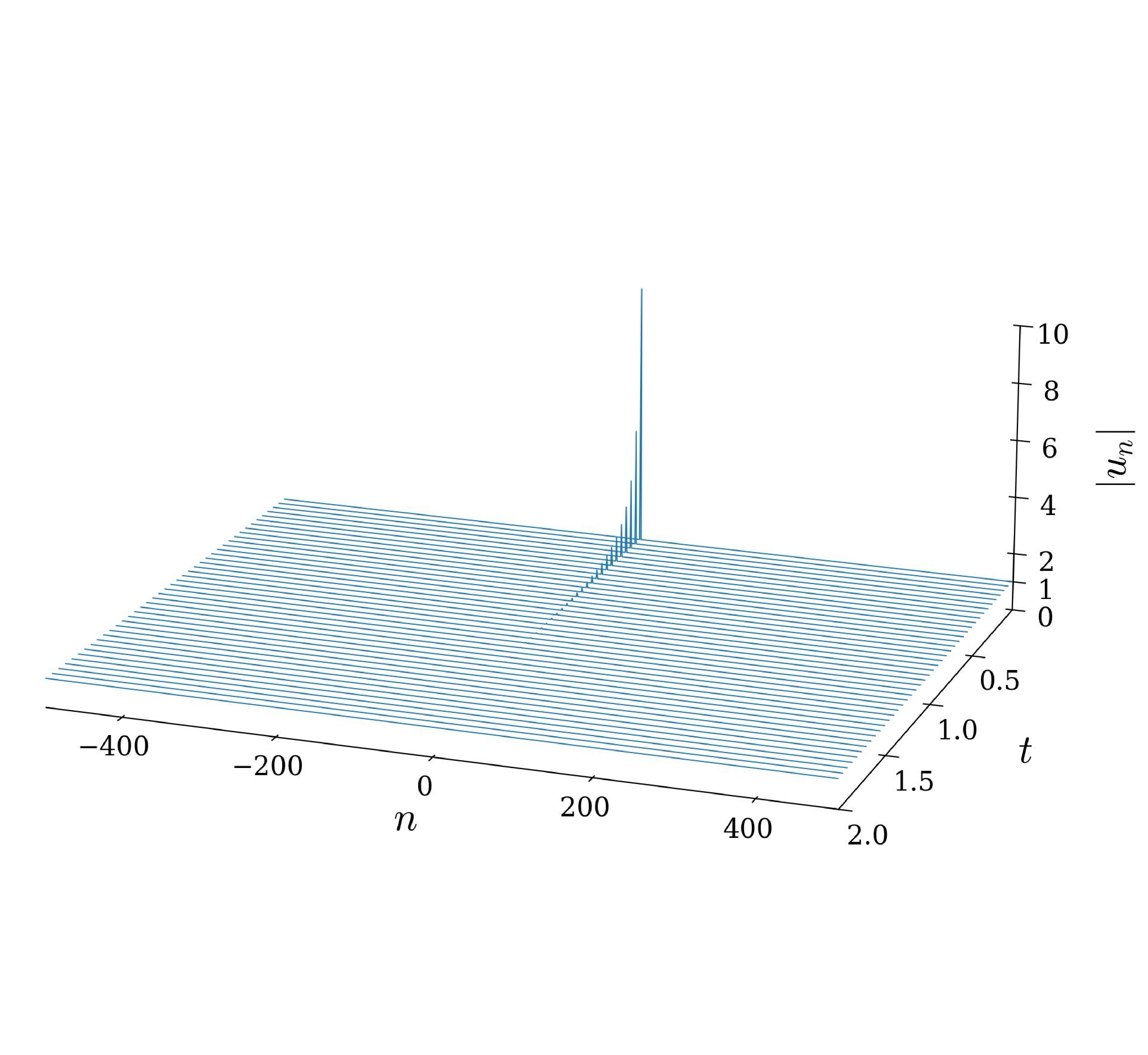}
	\end{center}
	\caption{Example 6: \emph{Spatiotemporal evolution of the initial condition \eqref{eq:xmplege}}. Left panel: parameters $A=10$, $B=0$, $\theta=\frac \pi 3$. Right panel: parameters $A=10$, $B=1$, $\theta=\frac \pi 3$.}
	\label{figB7B}
\end{figure}
The right panel of Figure~\ref{figB7B} shows the dynamics for $B=1$. Since all lattice sites except the central site at $n=0$ are initially located on the unit circle, the solution depicts the convergence of this high-amplitude central site to the unit circle, while all other sites remain stationary as predicted.
\\[2mm]
\emph{II. Equilibration dynamics: the case $B=1$.} What connects the dynamics between the cases $B=0$ and $B=1$ is the equilibration of the centered site at $n=0$. To illustrate the results of Proposition \ref{equib}  we focus on the case $B=1$ since the case $B=0$ is exactly the same for the dynamics of this centered site. 

The left panel of Figure \ref{figB7bb} shows the decay of $|u_0(t)|\to 1$, at an exponential rate as explained below. The right panel shows the evolution of the argument $\arg u_0$ exhibiting an exact linear growth. This is the case (i) of Proposition \ref{equib} as we analyze below.

Setting $w_n=|v_n|^2-1$, equation \eqref{al2} yields
\begin{equation}
 \dot{w}_n=2\,\mathrm{Re}\left(\overline{v}_n\dot{v}_n\right)=-2w_ng_n(t),
 \qquad g_n(t):=\mathrm{Im}\left(\overline{v}_n(v_{n+1}+v_{n-1})\right),\nonumber
\end{equation}
which is a linear scalar ODE for $w_n$, so that $w_n(t)=w_n(0)\exp\left(-2\int_0^tg_n(s)\,ds\right)$. The data \eqref{eq:xmplege} satisfy $w_{\pm1}(0)=0$, hence $w_{\pm1}\equiv0$, and therefore $\dot{v}_{\pm1}\equiv0$, that is $v_{\pm1}(t)=e^{i\theta}$ for all $t\ge0$.
Consequently $v_1+v_{-1}=2e^{i\theta}$ is constant and the equation for the central site decouples from the rest of the lattice,
\begin{equation}\label{eq:v0closed}
 \dot{v}_0=2i\left(|v_0|^2-1\right)e^{i\theta},\qquad v_0(0)=-iAe^{i\theta}.
\end{equation}
For every value of $v_0$ the vector field in \eqref{eq:v0closed} is a real multiple of $-ie^{i\theta}$. The ray $\{-i\varrho e^{i\theta}:\varrho>0\}$, on which the initial datum lies, is therefore invariant. Inserting $v_0(t)=-i\varrho(t)e^{i\theta}$ with $\varrho$ real reduces \eqref{eq:v0closed} to
\begin{equation}
 \dot{\varrho}=-2\left(\varrho^2-1\right),\qquad \varrho(0)=A,\nonumber
\end{equation}
and, the vector field in \eqref{eq:v0closed} being smooth, the Picard-Lindel\"of theorem guarantees that this is the solution. For $A>1$ it integrates to
\begin{equation}
 \varrho(t)=\coth\left(2t+c\right),\qquad c=\frac{1}{2}\ln\frac{A+1}{A-1},\nonumber
\end{equation}
so that $\varrho$ is global in time, strictly decreasing, $1<\varrho(t)\le A$, and $\varrho(t)-1=\mathcal{O}(e^{-4t})$, in agreement with the exponential decay of $|u_0|$ reported in Figure \ref{figB7bb}. Since $\varrho(t)>0$ for all $t$, the argument of $v_0$ is stationary, $\arg v_0(t) = \arg \left(-ie^{i\theta}\right)=\theta-\frac \pi 2$, and hence 
\begin{equation}
 \arg u_0(t)=2t+\theta-\frac{\pi}{2}\qquad (\mathrm{mod}\ 2\pi),\nonumber
\end{equation}
which for $\theta=\frac \pi 3$ is precisely the straight line $2t-\frac \pi 6$ of the right panel of Figure \ref{figB7bb}. Crucially, the condition~\eqref{condconst0} of Proposition \ref{equib}(i) is satisfied and  the local energy $E_{\text{loc}}=2\varrho\cos\left(\arg v_0-\theta\right)$ vanishes identically.  The central unit is in quadrature with its two neighbors, so its phase is frozen in the $v$ variables and the entire observed rotation of $u_0$ is carried by the background
factor $e^{2it}$, while the amplitude relaxes monotonically to the unit circle at the exponential rate $4$. We stress that the argument uses only $|u_{\pm1}(0)|=1$ and $\arg u_0(0)-\theta=-\frac \pi 2$, and is thus independent of the value of $B$.

Figure~\ref{figB7bbb} illustrates an example of equilibration via scenario (ii) of Proposition~\ref{equib}. The initial condition is~\eqref{eq:initA2} with $A_0=10$, $A_n=1$ for $n\neq 0$, $\theta_0=\tfrac{\pi}{3}-\tfrac{1}{20}$, $\theta_1=\tfrac{\pi}{3}$, $\theta_{-1}=0$, and $\theta_n=\tfrac{\pi}{6}$ for $n\notin\{0,\pm 1\}$. With this choice of phases, condition \eqref{condconst} of Proposition~\ref{equib} is satisfied, namely $E_{\mathrm{loc}}(0)\neq 0$. While the evolution of $|u_0(t)|$ is similar to that depicted in the left panel of Figure~\ref{figB7bb} for the initial condition \eqref{eq:xmplege}, the left panel of Figure~\ref{figB7bbb} now illustrates the transient nonlinear behavior of $\arg u_0(t)$ while $|u_0(t)|$ is far from equilibrium (approximately for $t\in [0, 0.75)$), as well as its asymptotic linear growth when $|u_0(t)|\approx 1$, exactly as predicted by Proposition~\ref{equib}(ii). Consequently, the right panel of Figure~\ref{figB7bbb} depicts the convergence $\frac{d}{dt}\arg u_0(t) \to 2$, in agreement with Proposition~\ref{equib}(ii).

%
\begin{figure}[t]
	\begin{center}
		\begin{tabular}{cc}
		\includegraphics[width=0.9\textwidth]{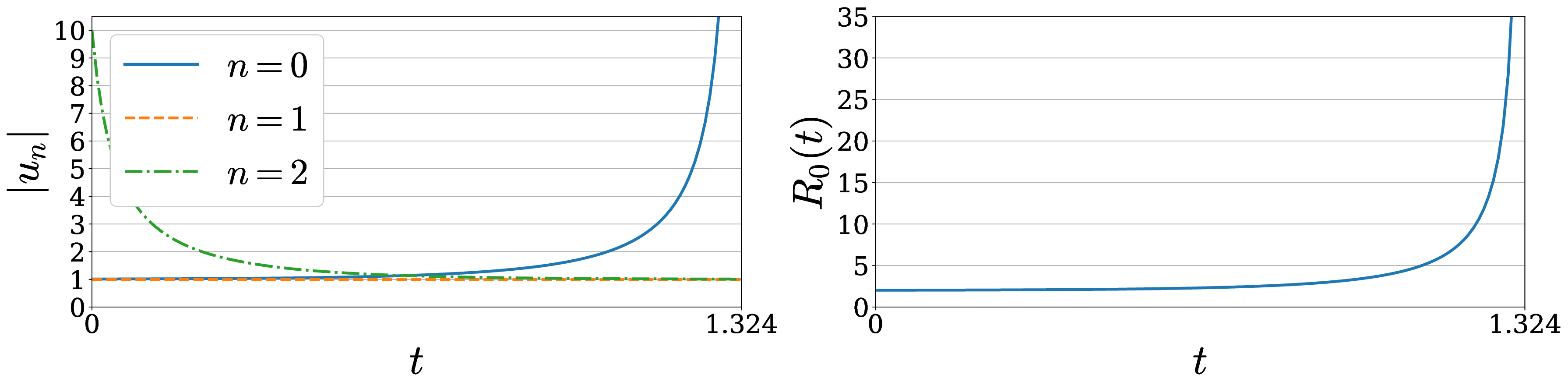}\\
        \includegraphics[width=0.9\textwidth]{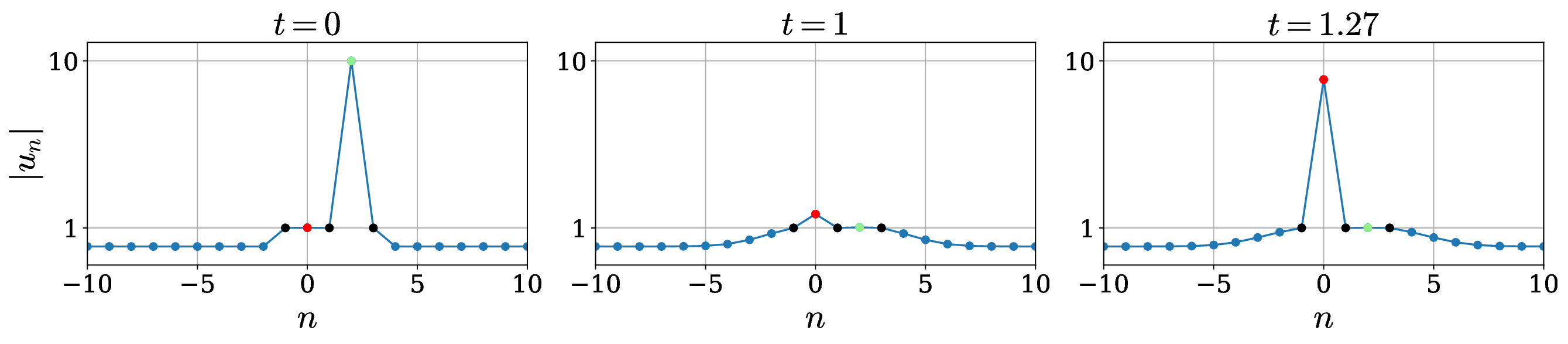}
		\end{tabular}
	\end{center}
	\caption{Example 7: \textit{Singularity of a singular solution with initial data \eqref{eq:xmplege22}}. Left panel: evolution of $|u_n(t)|$ for the sites $n=0,1,2$. Right panel: Evolution of the function $R_0(t)$. The singularity occurs at $t= T_{\max}\approx 1.3240$}
	\label{figB9}
\end{figure}		
\vskip 2mm
\noindent
\textbf{Example 7.} \emph{The instability of the globally existing solution of Proposition~\ref{ge-p} leading to blow-up: the combined effects of equilibration and finite-time singularity.}

We conclude with a final example considering the dynamics of the initial condition
\begin{equation}\label{eq:xmplege22}
u_n(0) = \begin{cases}
1.01 i e^{i}, & n=0,\\
e^{i}, & n=\pm 1, 3,\\
-10ie^{i}, & n=2,\\
0, & \text{otherwise}.
\end{cases}
\end{equation}
The initial condition \eqref{eq:xmplege22} is a superposition of two features: a component centered at site $n=2$ that satisfies $R_2(0)=-20$ and $J_2(0)=-4$ (analogous to the initial condition \eqref{eq:xmplege}) and is expected to undergo equilibration, and a component at site $n=0$ that defines a fundamental singular solution with an analytical blow-up time $T_{\max}^{J(0)<0} \approx 1.3258$, determined by formula \eqref{eq:maxtimep}.

Figure~\ref{figB9} illustrates the combined effects of equilibration at site $n=2$ and the singularity emerging at site $n=0$. The numerical blow-up time is $t = T_{\max} \approx 1.3240$, once again in excellent agreement with the analytical prediction.

Importantly, this example highlights the instability of global solutions guaranteed by Proposition~\ref{ge-p}(i). If $|u_0(0)| \leq 1$, the initial condition for site $n=2$ would define a globally existing solution with $|u_2(t)| \to 1$ as $t \to \infty$. However, in our setup, the initial condition at site $n=0$  being a small perturbation above unit modulus drives the solution to blow up in finite time (via the site at $n=0$), despite site $n=2$ satisfying the conditions of Proposition~\ref{ge-p}(i) and locally experiencing equilibration.

We conjecture that modulational instability of other analytical solutions with amplitudes greater than unity, such as plane waves or solutions on a background \cite{oy2014, op2019, bckop2025, cp2026}, will similarly trigger finite-time blow-up. This conjecture will be investigated in subsequent work, and the results will be reported elsewhere.
 
\bibliographystyle{myamsalpha}
\bibliography{references}

\newcommand{\etalchar}[1]{$^{#1}$}
\providecommand{\bysame}{\leavevmode\hbox to3em{\hrulefill}\thinspace}
\providecommand{\MR}{\relax\ifhmode\unskip\space\fi MR }
\providecommand{\MRhref}[2]{%
  \href{http://www.ams.org/mathscinet-getitem?mr=#1}{#2}
}
\providecommand{\href}[2]{#2}
\begin{thebibliography}{DPvdMV13}

\bibitem[AC91]{ac1991}
M.~J. Ablowitz and P.~A. Clarkson, \emph{Solitons, nonlinear evolution
  equations and inverse scattering}, London Mathematical Society Lecture Note
  Series, vol. 149, Cambridge University Press, Cambridge, 1991. \MR{1149378}

\bibitem[AG94]{AG94}
G.~S. Ammar and W.~B. Gragg, \emph{Schur flows for orthogonal {H}essenberg
  matrices}, Hamiltonian and Gradient Flows, Algorithms and Control, Fields
  Inst. Commun., vol.~3, Amer. Math. Soc., Providence, RI, 1994, pp.~27--34.

\bibitem[AL76a]{al1976b}
M.~J. Ablowitz and J.~F. Ladik, \emph{A nonlinear difference scheme and inverse
  scattering}, Stud. Appl. Math. \textbf{55} (1976), no.~3, 213--229.
  \MR{471341}

\bibitem[AL76b]{al1976a}
\bysame, \emph{Nonlinear differential-difference equations and {F}ourier
  analysis}, J. Mathematical Phys. \textbf{17} (1976), no.~6, 1011--1018.
  \MR{427867}

\bibitem[APT04]{apt2004}
M.~J. Ablowitz, B. Prinari, and A.~D. Trubatch, \emph{Discrete and continuous
  nonlinear {S}chr{\"o}dinger systems}, vol. 302, Cambridge University Press,
  2004.

\bibitem[Bal77]{b1977}
J. Ball, \emph{Remarks on blow-up and nonexistence theorems for nonlinear
  evolution equations}, The Quarterly Journal of Mathematics \textbf{28}
  (1977), no.~4, 473--486.

\bibitem[Bal78]{b1978}
\bysame, \emph{Finite-time blow-up in nonlinear problems, {N}onlinear
  {E}volution {E}quations ({MG} {C}randall, ed.)}, 1978.

\bibitem[BCK{\etalchar{+}}25]{bckop2025}
E.~C. Boadi, E.~G. Charalampidis, P.~G. Kevrekidis, N.~J. Ossi, and B. Prinari,
  \emph{On the discrete {K}uznetsov-{M}a solutions for the defocusing
  {A}blowitz-{L}adik equation with large background amplitude}, Wave Motion
  \textbf{134} (2025), 103496.

\bibitem[Caz03]{c2003}
T. Cazenave, \emph{Semilinear {S}chr\"{o}dinger equations}, Courant Lecture
  Notes in Mathematics, vol.~10, New York University, Courant Institute of
  Mathematical Sciences, New York; American Mathematical Society, Providence,
  RI, 2003. \MR{2002047}

\bibitem[CH98]{ch1998}
T. Cazenave and A. Haraux, \emph{An introduction to semilinear evolution
  equations}, Oxford Lecture Series in Mathematics and its Applications,
  vol.~13, Clarendon Press, Oxford University Press, New York, 1998.
  \MR{1691574}

\bibitem[CHF24]{chf2024}
M. Chen, J. He, and E. Fan, \emph{Long-time asymptotics for the defocusing
  {A}blowitz-{L}adik system with initial data in lower regularity}, Advances in
  Mathematics \textbf{450} (2024), 109769.

\bibitem[CJ16]{cj2016}
S. Cuccagna and R. Jenkins, \emph{On the asymptotic stability of {$N$}-soliton
  solutions of the defocusing nonlinear {S}chr\"odinger equation}, Comm. Math.
  Phys. \textbf{343} (2016), no.~3, 921--969. \MR{3488549}

\bibitem[CP26]{cp2026}
F. Coppini and B. Prinari, \emph{Novel periodic solutions and rogue waves of
  the defocusing scalar and coupled {A}blowitz-{L}adik systems on a nonzero
  background}, arXiv:2606.05123 (2026).

\bibitem[CS24]{cs2024}
F. Coppini and P.~M. Santini, \emph{Modulation instability, periodic anomalous
  wave recurrence, and blow up in the {A}blowitz-{L}adik lattices}, J. Phys. A
  \textbf{57} (2024), no.~1, Paper No. 015202, 28. \MR{4676830}

\bibitem[DPvdMV13]{dpmv2013}
F. Demontis, B. Prinari, C. van~der Mee, and F. Vitale, \emph{The inverse
  scattering transform for the defocusing nonlinear {S}chr\"odinger equation
  with nonzero boundary conditions}, Stud. Appl. Math. \textbf{131} (2013),
  no.~1, 1--40. \MR{3081238}

\bibitem[DZ03]{dz2003}
P. Deift and X. Zhou, \emph{Long-time asymptotics for solutions of the {NLS}
  equation with initial data in a weighted {S}obolev space}, Comm. Pure Appl.
  Math. \textbf{56} (2003), no.~8, 1029--1077. \MR{1989226}

\bibitem[FG99]{FG99}
L. Faybusovich and M. Gekhtman, \emph{On {S}chur flows}, J. Phys. A \textbf{32}
  (1999), no.~25, 4671--4680.

\bibitem[GN09]{GN09}
M. Gekhtman and I. Nenciu, \emph{Multi-{H}amiltonian structure for the finite
  defocusing {A}blowitz-{L}adik equation}, Comm. Pure Appl. Math. \textbf{62}
  (2009), no.~2, 147--182.

\bibitem[Gol06]{Gol06}
L. Golinskii, \emph{Schur flows and orthogonal polynomials on the unit circle},
  Sb. Math. \textbf{197} (2006), no.~7--8, 1145--1165.

\bibitem[Hal80]{h1980}
J.~K. Hale, \emph{Ordinary differential equations}, second ed., Robert E.
  Krieger Publishing Co., Inc., Huntington, NY, 1980. \MR{587488}

\bibitem[HKMM25]{hkmm2025}
D. Hennig, N.~I. Karachalios, D. Mantzavinos, and D. Mitsotakis, \emph{On the
  lifespan of nonzero background solutions to a class of focusing nonlinear
  {S}chr\"odinger equations}, Wave Motion \textbf{132} (2025), Paper No.
  103419. \MR{4812401}

\bibitem[HKMM26]{hkmm2026}
D. Hennig, N.~I. Karachalios, D. Mantzavinos, and D. Mitsotakis, \emph{Discrete
  nonlinear {S}chr\"odinger versus {A}blowitz-{L}adik: {E}xistence and dynamics
  of generalized focusing {NLS}-type lattices over a nonzero background},
  Nonlinearity \textbf{39} (2026), 035024.

\bibitem[KK04]{knoll2004jacobian}
D.~A. Knoll and D.~E. Keyes, \emph{Jacobian-free {N}ewton-{K}rylov methods: a
  survey of approaches and applications}, Journal of Computational Physics
  \textbf{193} (2004), no.~2, 357--397.

\bibitem[KN07]{KN07}
R. Killip and I. Nenciu, \emph{{CMV}: the unitary analogue of {J}acobi
  matrices}, Comm. Pure Appl. Math. \textbf{60} (2007), no.~8, 1148--1188.

\bibitem[LCM{\etalchar{+}}25]{lcmmkk2025}
M.~L. Lytle, E.~G. Charalampidis, D. Mantzavinos, J. Cuevas-Maraver, P.~G.
  Kevrekidis, and N.~I. Karachalios, \emph{On the proximity of
  {A}blowitz-{L}adik and discrete nonlinear {S}chr{\"o}dinger models: A
  theoretical and numerical study of {K}uznetsov-{M}a solutions}, Wave Motion
  \textbf{137} (2025), 103547.

\bibitem[Mit25]{Mitsotakis2025}
D. Mitsotakis, \emph{The complex-step {N}ewton method and its convergence},
  Numerische Mathematik \textbf{157} (2025), no.~3, 993--1021.

\bibitem[Nen05]{Nen05}
I. Nenciu, \emph{Lax pairs for the {A}blowitz-{L}adik system via orthogonal
  polynomials on the unit circle}, Int. Math. Res. Not. \textbf{2005} (2005),
  no.~11, 647--686.

\bibitem[OP19]{op2019}
A.~K. Ortiz and B. Prinari, \emph{Inverse scattering transform for the
  defocusing {A}blowitz-{L}adik system with arbitrarily large nonzero
  background}, Stud. Appl. Math. \textbf{143} (2019), no.~4, 373--403.

\bibitem[OY14]{oy2014}
Y. Ohta and J. Yang, \emph{General rogue waves in the focusing and defocusing
  {A}blowitz-{L}adik equations}, Journal of Physics A: Mathematical and
  Theoretical \textbf{47} (2014), no.~25, 255201.

\bibitem[Saa85]{saad1985generalized}
Y. Saad, \emph{A generalized minimal residual algorithm for solving
  nonsymmetric linear systems}, SIAM J. Sci. Stat. Comput \textbf{7} (1985),
  417--424.

\bibitem[Seg63]{s1963}
I. Segal, \emph{Non-linear semi-groups}, Ann. of Math. (2) \textbf{78} (1963),
  339--364. \MR{152908}

\bibitem[Sim05]{Sim05}
B. Simon, \emph{Orthogonal polynomials on the unit circle. {P}arts 1 and 2},
  Amer. Math. Soc. Colloq. Publ., vol.~54, Amer. Math. Soc., Providence, RI,
  2005.

\bibitem[Sim07]{Sim07}
\bysame, \emph{Zeros of {OPUC} and long time asymptotics of {S}chur and related
  flows}, Inverse Probl. Imaging \textbf{1} (2007), no.~1, 189--215.

\bibitem[SS99]{ss1999}
C. Sulem and P.-L. Sulem, \emph{The nonlinear {S}chr\"odinger equation},
  Applied Mathematical Sciences, vol. 139, Springer-Verlag, New York, 1999,
  Self-focusing and wave collapse. \MR{1696311}

\bibitem[Tao06]{t2006}
T. Tao, \emph{Nonlinear {D}ispersive {E}quations}, CBMS Regional Conference
  Series in Mathematics, vol. 106, American Mathematical Society, 2006, Local
  and global analysis. \MR{2233925}

\bibitem[XF18]{XiaFokas18}
B. Xia and A.~S. Fokas, \emph{Initial-boundary value problems associated with
  the {A}blowitz-{L}adik system}, Phys. D \textbf{364} (2018), 27--61.
  \MR{3737868}

\bibitem[Zei86]{z1986}
E. Zeidler, \emph{Nonlinear functional analysis and its applications. {I}},
  Springer-Verlag, New York, 1986, Fixed-point theorems, Translated from the
  German by Peter R. Wadsack. \MR{816732}

\bibitem[ZS71]{zs1971}
V.~E. Zakharov and A.~B. Shabat, \emph{Exact theory of two-dimensional
  self-focusing and one-dimensional self-modulation of waves in nonlinear
  media}, \v{Z}. \`Eksper. Teoret. Fiz. \textbf{61} (1971), no.~1, 118--134.
  \MR{0406174}

\end{thebibliography}
 
\end{document}